% !TEX encoding = UTF-8 Unicode

\documentclass[a4paper, 11pt]{amsart}
\usepackage{mathpazo} 
\usepackage[utf8]{inputenc}
\usepackage[T1]{fontenc}

\usepackage[dvipsnames]{xcolor}
\usepackage[backref=page,colorlinks,bookmarksopen=true,allcolors=MidnightBlue,linktocpage=true]{hyperref}
\usepackage{amsfonts,amsmath,amssymb,amsthm,stmaryrd}
\usepackage{nicefrac}
\usepackage{enumerate}
\usepackage{pgf,tikz}
\usetikzlibrary{cd}
\usetikzlibrary{arrows}

\usepackage{mathrsfs, xspace}
\usepackage{a4wide}
\usepackage[all]{xy}

\delimitershortfall=-0.1pt
\theoremstyle{plain}
\newtheorem{proposition}{Proposition}[section]
\newtheorem{theorem}[proposition]{Theorem}

\newtheorem{lemma}[proposition]{Lemma}
\newtheorem{corollary}[proposition]{Corollary}

\newtheorem{cor}[proposition]{Corollary}

\newtheorem{prop}[proposition]{Proposition}
\newtheorem{lem}[proposition]{Lemma}
\newtheorem{thm}[proposition]{Theorem}

\theoremstyle{definition}
\newtheorem{definition}[proposition]{Definition}
\newtheorem{defn}[proposition]{Definition}

\theoremstyle{remark}
\newtheorem{remark}[proposition]{Remark}
\newtheorem{example}[proposition]{Example}
\newtheorem{examples}[proposition]{Examples}
\newtheorem{question}[proposition]{Question}

\everymath{\displaystyle}

\DeclareMathOperator{\Stab}{Stab}

\DeclareMathOperator{\trace}{Trace}

\DeclareMathOperator{\Span}{Span}

\DeclareMathOperator{\Id}{Id}
\DeclareMathOperator{\Ad}{Ad}

\newcommand{\Exterior}{\mathchoice{{\textstyle\bigwedge}}%
    {{\bigwedge}}%
    {{\textstyle\wedge}}%
    {{\scriptstyle\wedge}}}

\DeclareMathOperator{\SO}{SO}
\DeclareMathOperator{\Sp}{Sp}
\DeclareMathOperator{\Isom}{Isom}

\newcommand{\cat}{{\upshape CAT(0)}\xspace}

\newcommand{\action}{\curvearrowright}

\newcommand{\calV}{\mathcal V}
\newcommand{\bor}{\mathbf{or}}

\newcommand{\R}{\mathbf{R}}                          % |R the field is now bold
\newcommand{\Q}{\mathbf{Q}}                          % |Q
\newcommand{\C}{\mathbf{C}}                          % |C the field is now bold
\newcommand{\N}{\mathbf{N}}                          % |N the ring is now bold
                          % /Z
\newcommand{\Prob}{\mathbf{P}}

\DeclareMathOperator{\LL}{L}
\DeclareMathOperator{\PP}{P}
\DeclareMathOperator{\SSS}{S}
\DeclareMathOperator{\OOO}{O}

%infinite dimensional isometry groups
\DeclareMathOperator{\OO}{O_\mathbf K}
\DeclareMathOperator{\OC}{O_\mathbf C}
\DeclareMathOperator{\OH}{O_\mathbf H}
\DeclareMathOperator{\OR}{O_\mathbf R}

\DeclareMathOperator{\GL}{GL}
\DeclareMathOperator{\PU}{PU}
\DeclareMathOperator{\SU}{SU}
\DeclareMathOperator{\U}{U}
\DeclareMathOperator{\PO}{PO}
\renewcommand{\H}{\mathbf H}
\newcommand{\K}{\mathbf K}

\newcommand{\calX}{\mathcal X}
\newcommand{\ov}{\overline}
 %trace
\newcommand{\calY}{\mathcal Y}
\newcommand{\calT}{\mathcal T}
\newcommand{\G}{\Gamma}
\newcommand{\cal}{\mathcal}

%%%%%%%%%%%commands from the file maxreps
\newcommand{\Ss}{\mathcal S}
\newcommand{\calW}{\mathcal W}
\newcommand{\Xx}{\mathcal X}

\newcommand{\Hh}{\mathcal H}
\newcommand{\Yy}{\mathcal Y}
\newcommand{\calC}{\mathcal C}
\newcommand{\calF}{\mathcal F}
\newcommand{\calH}{\mathcal H}
\newcommand{\calZ}{\mathcal Z}
\newcommand{\calI}{\mathcal I}

\newcommand{\rk}{{\rm rk}} %the rank of a symmetric space

\newcommand{\HH}{{\rm H}} % cohomology
\newcommand{\Hb}{{\rm H}_b} %bounded cohomology
\newcommand{\Cb}{{\rm C}_b} %complex defining bounded cohomology
\newcommand{\Hcb}{{\rm H}_{cb}} %continuous bounded cohomology
\newcommand{\Hc}{{\rm H}_{c}} %continuous cohomology
 %continuous bounded cohomology
 %continuous cohomology
 %L^\infty functions.
\newcommand{\HCn}{\Xx_\C(1,n)} %The complex hyperbolic space. 
\newcommand{\HCk}{\Xx_\C(1,k)} %The complex hyperbolic space. 
%Maybe we want to change the last two notation, that only appear from in sec:4.3
\newcommand{\Ii}{\mathcal I}% starting from sec:4.3 denotes the isotropic subspaces (and the signature of the form appears in the notation)

\renewcommand{\P}{\mathbf P}%the projective space

\renewcommand{\SS}{\mathbf S} %circle
\DeclareMathOperator{\PSL}{PSL}
\DeclareMathOperator{\Jac}{Jac}

\author{Bruno Duchesne}
\address{Université de Lorraine, CNRS, Institut \'Elie Cartan de Lorraine, F-54000 Nancy, France}
\author{Jean L\'ecureux}
\address{Laboratoire de Mathématiques d’Orsay, Univ. Paris-Sud, CNRS, Université Paris-Saclay, 91405 Orsay, France}
\author{Maria Beatrice Pozzetti}
\address{Mathematisches Institut, Universität Heidelberg, 69120 Heidelberg, Deutschland}
\thanks{The two first authors are supported in part by French projects ANR-14-CE25-0004 GAMME and ANR-16-CE40-0022-01 AGIRA, the last author is supported in part by the DFG priority program SPP 2026 Geometry at infinity}
\date{\today}
\begin{document}
\title[Boundary maps and maximal representations in infinite dimension]{Boundary maps and maximal representations on infinite dimensional Hermitian symmetric spaces}
\begin{abstract}
We define a Toledo number for actions of surface groups and complex hyperbolic lattices on infinite dimensional Hermitian symmetric spaces, which allows us to define maximal representations.
 When the target is not of tube type we show that there cannot be Zariski-dense maximal representations, and whenever the existence of a boundary map can be guaranteed, the representation preserves a finite dimensional totally geodesic subspace on which the action is maximal. In the opposite direction we construct examples of geometrically dense maximal representation in the infinite dimensional Hermitian symmetric space of tube type and finite rank. Our approach is based on the study of  boundary maps, that we are able to construct in low ranks or under some suitable Zariski-density assumption, circumventing the lack  of local compactness in the infinite dimensional setting.
\end{abstract}

\maketitle
%%%%%%%%%%%%%%%
\section{Introduction}
\subsection{Representations of semi-simple Lie groups and their lattices.}
Lattices (i.e. discrete subgroups with finite covolume) of semi-simple Lie groups may be thought of as discretizations of these Lie groups. The question of knowing how much of the ambient group is encoded in its lattices is very natural and attracted a lot of interest in the past decades. 

Among many results, one can spotlight Mostow’s strong rigidity that implies that a lattice   in a higher rank semi-simple Lie group without compact factors completely determines the Lie group \cite{MR0385004}. Later Margulis proved his superrigidity theorem and showed that linear representations of irreducible lattices of higher rank semi-simple algebraic groups over local fields are ruled by representations of the ambient algebraic groups \cite{MR1090825}.

These rigidity results may be understood using a geometric object associated to the algebraic group: a Riemannian symmetric space (for a Lie group) or a Euclidean building (for an algebraic group over a  non-Archimedean field).

Lattices have natural and interesting linear representations outside the finite dimensional world, starting with Hilbert spaces. For example, some representations may come from the principal series of the Lie group. Outside the world of unitary representations, some infinite dimensional representations of a lattice have a very strong geometric flavor. This is the case when there is an invariant non-degenerate quadratic or Hermitian form of finite index, that is when the representation falls in $\PO_\K(p,\infty)$ where $\K=\R$ or $\C$ and $p$ is finite. Then, one can consider the associated action on some infinite dimensional Riemannian symmetric space of non-positive curvature $\calX_\K(p,\infty)$. For example, when $p=1$, $\calX_\K(p,\infty)$ is the infinite dimensional real or complex hyperbolic space. Gromov had these expressive words about $\calX_\R(p,\infty)$ \cite[p.121]{MR1253544}:\\

\begin{quote}
\textit{These spaces look as cute and sexy to me as their finite dimensional siblings but they have been neglected by geometers and algebraists alike.}\\
\end{quote}

In \cite{MR3343349}, an analogue of Margulis superrigidity has been obtained for higher rank cocompact lattices of semi-simple Lie groups using harmonic maps techniques.  The main result is that non-elementary representations preserve a totally geodesic copy of a finite dimensional symmetric space of non-compact type. The finite rank assumption, here $p<\infty$, may be thought of as a geometric Ersatz of local compactness.

The reader should be warned that even in the case of actions on finite rank symmetric spaces of infinite dimension, some new baffling phenomena may appear. For example, Delzant and Py exhibited representations of  $\PSL_2(\R)$ in $\OOO_\R(1,\infty)$ (and more generally of $\PO(1,n)$ in $\OOO_\R(p,\infty)$ for some  values of $p$ depending on $n$). They found a one parameter family of exotic deformations of  $\calX_\R(1,2)$ in $\calX_\R(1,\infty)$ equivariant with respect to representations leaving no finite dimensional totally geodesic subspace invariant. See \cite{MR2881312}, and \cite{MR3263898} for a classification. Very recently, this classification has been extended to self-representations of $\OOO_\R(1,\infty)$ \cite{MP18}. Moreover, exotic representations of $\SU(1,n)$  in  $\OOO_\C(1,\infty)$ have been also obtained in \cite{Monod18}. \

In rank one, there is in general no hope for an analogue of Margulis superrigidity (even in finite dimension). For example, fundamental groups of non-compact hyperbolic surfaces of finite volume are free groups %and lattices in $\PO(1,2)$ as well, 
and thus not rigid. For compact hyperbolic surfaces, the lack of rigidity gives rise to the Teichmüller space and thus a whole variety of deformations of the corresponding lattices.

For complex hyperbolic lattices, the complex structure constraints lattices since the Kähler form implies the non-vanishing of the cohomology in degree two.  Furthermore, in finite dimension, the K\"ahler form was succesfully used to define a characteristic invariant that selects representations with surprising rigidity properties, the so-called \emph{Toledo invariant} \cite{Tol,BIW}.

The goal of this paper is to study representations of complex hyperbolic lattices in the groups $\PO_\C(p,\infty)$ and $\PO_\R(2,\infty)$, and the associated isometric actions on the Hermitian symmetric spaces $\calX_\C(p,\infty)$ and $\calX_\R(2,\infty)$.
These spaces  have a Kähler form $\omega$ and this yields a class in bounded cohomology of degree 2 on $G=\PO_\C(p,\infty)$ induced by the cocycle that computes the integral of the K\"ahler form $\omega$ over a straight geodesic triangle $\Delta(g_0x,g_1x,g_2x)$ whose vertices are in the orbit of a basepoint:

$$C_\omega^x(g_0,g_1,g_2)=\frac{1}{\pi}\int_{\Delta(g_0x,g_1x,g_2x)}\omega.$$

We denote by $\kappa^b_G\in\Hb^2(G,\R)$ the associated cohomology class where $G=\PO_\C(p,\infty)$ (see \S\ref{sec:bndbasics}). As in finite dimension, the Gromov norm $\|\kappa^b_G\|_\infty$ is exactly the rank of $\calX_\C(p,\infty)$ (after normalization of the metric). Let $\rho\colon \Gamma\to\PO_\C(p,\infty)$ be a homomorphism of a complex hyperbolic lattice. Pulling back $\kappa^b_G$ by $\rho$, one gets a bounded cohomology class for $\Gamma$ and one can define \emph{maximal representations} of $\Gamma$ as representations maximizing a Toledo number defined as in finite dimension (see Definition \ref{def:max}).

Our main results concern maximal representations of fundamental groups of surfaces and more generally hyperbolic lattices. It is a continuation of previous results for finite dimensional Hermitian targets, see \cite{BI,BIW,Poz,KM} among other references. The meaning of Zariski-density in infinite dimension is explained in the following subsection. For representations with target $\PO_\C(p,\infty)$ we prove rigidity:

%\begin{thm}\label{prop:surface}
%Let $\G_\Sigma$ be a lattice in $\SU(1,1)$ and $\rho\colon\G_\Sigma\to\PO_\C(p,\infty)$ be a  maximal representation. If $p\leq2$ then there exists a finite dimensional totally geodesic subspace $\Xx_\C(p,p)\subset \Xx_\C(p,\infty)$ preserved by $\rho(\G_\Sigma)$. Furthermore the induced representation $\rho\colon\G_\Sigma\to\PU(p,p)$ is maximal.
%
%More generally, for any $p\in\N$, there is no maximal Zariski-dense representation $\rho\colon\G_\Sigma\to\PO_C(p,\infty)$. 
%\end{thm}

\begin{thm}\label{thm:hyplat} Let $\G<\SU(1,n)$ be a complex hyperbolic lattice with $n$ a positive integer, and let  $\rho\colon\Gamma\to\PO_\C(p,\infty)$ be a maximal representation. If $p\leq2$ then there is a finite dimensional totally geodesic Hermitian symmetric subspace $\calY\subset\Xx_\C(p,\infty)$ that is invariant by $\Gamma$. Furthermore, the representation $\Gamma\to\Isom(\calY)$ is maximal.

More generally, for any $p\in\N$, there is no maximal Zariski-dense representation $\rho\colon\G\to\PO_\C(p,\infty)$. 
\end{thm}
In particular, since $\calY$ is finite dimensional results of Burger-Iozzi \cite{BI}, the third author \cite{Poz} and Koziarz-Maubon \cite{KM} apply. 

Interestingly enough, the analogous result of Theorem \ref{thm:hyplat}  doesn't hold for the orthogonal group $\OR(2,\infty)$ and $n=1$. Let $\Sigma$ be a compact connected Riemann surface of genus one with one connected boundary component (which is a circle), that is a \emph{one-holed torus}. The fundamental group $\Gamma_\Sigma$ of $\Sigma$ is thus a free group on two generators and a lattice in $\SU(1,1)$. 

\begin{thm}\label{thm:dense}
There are  geometrically dense maximal representations $\rho:\G_\Sigma\to\PO_\R(2,\infty)$.
\end{thm}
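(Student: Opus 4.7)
The plan is to construct $\rho$ by bending a finite-dimensional maximal representation along a generator, exploiting the infinite-dimensional centralizer available inside $G:=\PO_\R(2,\infty)$. The key input is the analogue in this setting of the Burger--Iozzi--Wienhard formula for surfaces with boundary: for $\Gamma_\Sigma=\langle a,b\rangle$ free with boundary class $c=[a,b]$, the Toledo invariant of $\rho\colon\Gamma_\Sigma\to G$ depends only on the conjugacy class of $\rho(c)$, and maximality is equivalent to $\rho([a,b])$ being a tube-type element realizing the maximal rotation number (twice the rank, after normalization), i.e.\ a translation along a maximal polydisk.

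First I would fix a maximal starting representation $\rho_0\colon\Gamma_\Sigma\to\PO_\R(2,2)\subset G$, for instance obtained from a hyperbolization $\Gamma_\Sigma\to\PU(1,1)$ via the low-rank isomorphism $\PO_\R(2,2)^{0}\cong(\PSL_2(\R)\times\PSL_2(\R))/\{\pm I\}$ followed by the diagonal embedding. This $\rho_0$ is maximal but preserves $\calX_\R(2,2)\subset\calX_\R(2,\infty)$, hence is not geometrically dense. Now set $A=\rho_0(a)$, $B=\rho_0(b)$, let $g$ range over the centralizer $Z_G(A)$, and define the bent representation $\rho_g(a)=A$, $\rho_g(b)=gBg^{-1}$. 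Using $gA=Ag$, one computes
\[
\rho_g([a,b])\;=\;A(gBg^{-1})A^{-1}(gB^{-1}g^{-1})\;=\;g\,[A,B]\,g^{-1},
\]
so $\rho_g(c)$ is conjugate to $\rho_0(c)$ and the Toledo invariant, hence maximality, is preserved. Since $A$ acts nontrivially only on a finite-dimensional nondegenerate subspace $V$ of signature $(2,2)$, the centralizer $Z_G(A)$ contains a copy of the orthogonal group of $V^{\perp}$, an infinite-dimensional real Hilbert group; this is where the deformation space lives.

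The hard part is to arrange that $\rho_g(\Gamma_\Sigma)=\langle A,gBg^{-1}\rangle$ preserve no proper closed totally geodesic subspace of $\calX_\R(2,\infty)$. Any invariant $Y$ would be stable under $A$, hence would decompose according to the $A$-invariant splitting $V\oplus V^{\perp}$, and the further constraint $gBg^{-1}(Y)=Y$ translates, for $g$ acting trivially on $V$, into a concrete equation on the projections of $B$-orbits to $V^{\perp}$. The strategy is a Baire-category argument: the finite-dimensional totally geodesic subspaces of $\calX_\R(2,\infty)$ containing a fixed basepoint form a countable union of finite-dimensional moduli (indexed by signature and rank type), and for each type the set of ``bad'' $g$'s, i.e.\ those making a subspace of that type $\rho_g$-invariant, is a proper closed nowhere dense subset of the Polish deformation group. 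A generic $g$ then yields the desired geometrically dense maximal $\rho_g$.

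Verifying this transversality rigorously is the technical core of the argument and the place I expect to struggle most: one must check, type by type, that no ``accidental'' stabilization can persist for every $g$ in the infinite-dimensional centralizer, and one must also make sure the bent representation still satisfies the assumptions (if any) behind the identification of maximality with maximal rotation number of the boundary, which in the infinite-dimensional setting relies on the boundary-map machinery developed earlier in the paper. This is precisely the kind of infinite-dimensional flexibility, in the spirit of the Delzant--Py and Monod--Py constructions invoked in the introduction, that is unavailable in finite rank and explains the contrast with Theorem~\ref{prop:surface}.
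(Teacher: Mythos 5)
Your bending strategy is genuinely different from the paper's construction, but as written it has two gaps that I do not think can be patched without essentially redoing the work the paper does by other means.

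The first gap is the claim that the Toledo invariant ``depends only on the conjugacy class of $\rho([a,b])$'', which is what lets you conclude that $\rho_g$ stays maximal. This is false as stated, even in finite dimension: the Burger--Iozzi--Wienhard formula for surfaces with boundary expresses $i_\rho$ through the translation number of a \emph{lift} of $\rho(c)$ to a central extension, and that lift is determined by the whole representation, not by the conjugacy class of $\rho(c)$ in $G$ (two representations with conjugate boundary holonomy can have Toledo invariants differing by integers). The honest version of your argument would be: along the continuous bending path the lift stays in a fixed conjugacy class of the central extension, hence the translation number, hence $i_{\rho_g}$, is constant. But that requires a rotation-number formula and continuity of $i_\rho$ for representations into $\PO_\R(2,\infty)$, neither of which is available off the shelf in infinite dimension; establishing them is comparable in difficulty to what the paper actually proves. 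The paper instead proves a direct formula (Proposition \ref{prop:mas}): if $\rho([a,b])$ fixes some $l\in\Ii_1(2,\infty)$, then $2i_\rho=\beta_\R(l,\rho(a^{-1})l,\rho(ba)^{-1}l)+\beta_\R(\rho(ba)^{-1}l,\rho(b^{-1})l,l)$ --- note that this visibly depends on where $\rho(a)$ and $\rho(b)$ move $l$, not just on the conjugacy class of $\rho(c)$ --- and then builds $A,B$ by ping-pong on intervals in $\Ii_1(2,\infty)$ so that a fixed point $l$ exists (via Tychonoff, using weak compactness of the closed intervals) and both Bergmann values equal $2$.

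The second gap is the Baire-category argument for geometric density. The finite-dimensional totally geodesic subspaces through a basepoint of a given type do \emph{not} form a finite-dimensional moduli space: already the copies of $\calX_\R(2,2)$ through a point are parametrized by an infinite-dimensional Grassmannian, so your ``countable union of finite-dimensional moduli'' premise fails, and it is not clear that the bad set of $g$'s for a fixed type is closed or nowhere dense. Moreover geometric density also requires excluding \emph{infinite-dimensional} proper invariant totally geodesic subspaces (copies of $\calX_\R(1,\infty)$, $\calX_\C(1,\infty)$, $\calX_\H(1,\infty)$, products, \dots) and fixed points at infinity, which your sketch does not address. The paper handles this deterministically: the explicit $A,B$ are built so that $\langle A,B\rangle$ preserves no finite-dimensional subspace of $\calH$ (Proposition \ref{prop:dense}, using rotations by rationally independent angles), and then the remaining possibilities for a minimal invariant subspace are eliminated using Lemma \ref{lem:fdtgs}, tightness (Lemma \ref{lem:tight}) to kill rank-one and quaternionic factors, and Theorem \ref{prop:surface} to kill a $\calX_\C(2,\infty)$ factor. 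If you want to salvage your approach, you would need to replace the genericity claim by an explicit choice of $g$ with an irreducibility certificate of this kind, at which point you are close to the paper's construction anyway.
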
 
Observe that the properties of maximal representations in $\PO_\R(2,\infty)$ and $\PO_\C(p,\infty)$ are so different because,  for every $p$, the Hermitian Lie group $\OR(2,p)$ is of tube type, while the Hermitian Lie groups $\SU(p,q)$ are of tube type if and only if $p=q$. We refer to Section \ref{sec:tube} for more details. This allows much more flexibility, the chain geometry at infinity being trivial.
%\begin{remark}
%It is easy to generalize the construction we provide here to produce geometrically dense maximal representations of any punctured surface. With some more work (for example using the doubling construction described in \cite{FP}) one can also provide examples of geometrically dense maximal representations of fundamental groups of closed surfaces in $\PO_\R(2,\infty)$.
%\end{remark}

Not much is known about representations of complex hyperbolic lattices, and even less so in infinite dimension. In the case of surface groups, instead, from the complementary series of $\PSL_2(\R)$, Delzant and Py exhibited one-parameter families of representations in $\PO_\R(p,\infty)$ for every $p\in\N$ \cite{MR2881312}. Having explicit representations in $\PO_\R(2,\infty)$, it is compelling to determine if they induce maximal representations. Showing that some harmonic equivariant map is actually totally real, we conclude, in Appendix \ref{Appendix} that the Toledo invariant of these representations vanishes.

%Our third main result deals with complex hyperbolic  lattices in higher dimension.

%In particular, since $\calY$ is finite dimensional results of Burger-Iozzi \cite{BI}, the third author \cite{Poz} and Koziarz-Maubon \cite{KM} apply.

\begin{remark}The difference between $p\leq2$ and $p>2$ lies in the hypotheses under which we can prove the existence of boundary maps (see \S\ref{subsec:bndmaps}). For $p\leq 2$, we are able to prove the existence of well suited boundary maps under geometric density (a hypothesis we can easily reduce to). Unfortunately, for $p>2$, we can only prove it under Zariski-density which is a stronger assumption.
\end{remark}
\subsection{Boundary maps and standard algebraic groups}\label{subsec:bndmaps}

In order to prove Theorem \ref{thm:hyplat}, we use, as it is now standard in bounded cohomology, boundary maps techniques. Let $\G$ be a lattice in $\SU(1,n)$ and $P$ a strict parabolic subgroup of $\SU(1,n)$. The space $B=\SU(1,n)/P$ is a measurable $\G$-space which is amenable and has very strong ergodic properties, and is thus a \emph{strong boundary} (see Definition \ref{def:strong_boundary}) in the sense of \cite{BFICM}. This space can be identified with the visual boundary of the hyperbolic space $\calX_\C(1,n)$. 

In finite dimension, for example in \cite{Poz}, the target of the boundary map is the Shilov boundary of the symmetric space $\calX_\C(p,q)$. If $p\leq q$, this Shilov boundary can be identified with the space $\calI_p$ of isotropic linear subspaces of dimension $p$ in $\C^{p+q}$. In our infinite dimensional setting, we use the same space $\calI_p$ of isotropic linear subspaces of dimension $p$. 

A main difficulty appears in this infinite dimensional context: this space is not compact anymore  for the natural Grassmann topology. Thus the existence of boundary $\Gamma$-maps $B\to \calI_p$ is more involved than in finite dimension. Such boundary maps have been obtained in a non-locally compact setting when the target is the visual boundary $\partial \calX$ of 	a \cat space $\calX$ of finite telescopic dimension, on which a group $\Gamma$ acts isometrically \cite{Duc13,BDL}. Here, $\calI_p$ is only a closed $G$-orbit of $\partial\calX_\C(p,\infty)$. Actually, $\calI_p$ is a subset of the set of vertices in the spherical building structure on $\partial\calX_\C(p,\infty)$ and the previous result is not sufficient. To prove the existence of boundary maps to $\calI_p$, we reduce to representations whose images are dense, in the sense that is explained below.\\

Following \cite{MR2574741}, we say that  a group $\Gamma$ acting by isometries on a symmetric space (possibly of infinite dimension) of non-positive curvature is \emph{geometrically dense} if there is no strict closed invariant totally geodesic subspace (possibly reduced to a point) nor fixed point in the visual boundary.
For finite dimensional symmetric spaces, geometric density is equivalent to Zariski-density in the isometry group, which is a real algebraic group. To prove Theorem \ref{Shilov}, we rely also on the theory of algebraic groups in infinite dimension introduced in \cite{HK}. Roughly speaking, a subgroup of the group of invertible elements of a Banach algebra is \emph{algebraic} if it is defined by (possibly infinitely many) polynomial equations with a uniform bound on the degrees of the polynomials.

This notion of algebraic groups is too coarse for our goals and we introduce the notion of \emph{standard algebraic groups} in infinite dimension. Let $\calH$ be a Hilbert space and let $\GL(\calH)$ be the group of invertible bounded operators of $\calH$. An algebraic  subgroup of $\GL(\calH)$ is \emph{standard} if it is defined by polynomial equations in the matrix coefficients $g\mapsto\langle ge_i,e_j\rangle$ where $(e_i)$ is some Hilbert base of $\calH$. See Definition \ref{def:stdalggroup}.
With this definition, we are able to show that stabilizers of points in $\partial X_\K(p,\infty)$ are standard algebraic subgroups, and the same holds for stabilizers of proper totally geodesic subspaces. 
\begin{defn}\label{Zdense}A subgroup of $\OO(p,\infty)$ is \emph{Zariski-dense} when it is not contained in a proper standard algebraic group. A representation $\rho\colon\G\to\PO_\K(p,\infty)$ is Zariski-dense if the preimage of $\rho(\G)$ in $\OO(p,\infty)$ is Zariski-dense. For a short discussion about a possible Zariski topology in infinite dimension, we refer to Remark~\ref{rem:zariski}.
\end{defn} 
We show in Proposition \ref{ep} that Zariski-density implies geometric density. 
\begin{proposition}\label{ep}
Let $p\in\N$. Stabilizers of closed totally geodesic subspaces of $\calX_\K(p,\infty)$ and stabilizers of  points in $\partial  \calX_\K(p,\infty)$ are  standard algebraic subgroups of $\OO(p,\infty)$.

In particular, a Zariski-dense subgroup of $\OO(p,\infty)$ is geometrically dense.
 \end{proposition}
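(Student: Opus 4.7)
The plan is to reduce everything to one core lemma: for any closed subspace $W\subseteq\calH$, the stabilizer $\Stab_{\OO(p,\infty)}(W)$ is a standard algebraic subgroup. To prove the lemma, pick a Hilbert basis $(e_i)_{i\in I}$ of $\calH$ adapted to the orthogonal decomposition $\calH=W\oplus W^\perp$, so that $I=I_W\sqcup I_{W^\perp}$ with $(e_i)_{i\in I_W}$ a Hilbert basis of $W$. Then the condition $gW=W$ is equivalent to the vanishing of the matrix coefficients $\langle g e_i,e_j\rangle$ for $i\in I_W$ and $j\in I_{W^\perp}$ (and the analogous symmetric relations forced by the $\OO(p,\infty)$-invariant form). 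These are degree-one polynomial equations in the matrix coefficients with respect to $(e_i)$, so $\Stab_{\OO(p,\infty)}(W)$ is standard algebraic in the sense of Definition~\ref{def:stdalggroup}. Choosing a Hilbert basis simultaneously adapted to a finite family or to a flag of closed subspaces, the same argument shows that joint stabilizers of such families are standard algebraic.

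To deduce the first statement, one uses the structure theory of closed totally geodesic subspaces of $\calX_\K(p,\infty)$: any such subspace $Y$ is determined by (and determines) a closed non-degenerate subspace $W\subseteq\calH$ of Witt index $p'\leq p$, in such a way that the isometric stabilizer of $Y$ coincides with $\Stab(W)$. In particular, a single point $x\in\calX_\K(p,\infty)$, viewed as a $0$-dimensional totally geodesic subspace, corresponds to a maximal positive-definite $p$-dimensional subspace of $\calH$, whose stabilizer is the usual maximal compact subgroup at $x$. The core lemma then yields the claim.

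For the second statement we appeal to the description of $\bd\calX_\K(p,\infty)$ as a CAT$(1)$ spherical building of rank $p$ (\cite{Duc13,BDL}): vertices are identified with non-trivial isotropic subspaces of $\calH$ of dimension at most $p$, simplices with isotropic flags, and every boundary point lies in the interior of a unique simplex whose pointwise stabilizer in $\OO(p,\infty)$ coincides with the stabilizer of the corresponding flag $W_1\subsetneq\cdots\subsetneq W_k$. Choosing a Hilbert basis adapted to this flag, the core lemma again identifies the stabilizer of any boundary point as a standard algebraic subgroup.

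The final assertion follows by contraposition: if $\G<\OO(p,\infty)$ is not geometrically dense, then by definition it preserves a proper closed totally geodesic subspace of $\calX_\K(p,\infty)$ or fixes a point in $\bd\calX_\K(p,\infty)$, and is therefore contained in a proper standard algebraic subgroup—i.e.\ $\G$ is not Zariski-dense. The main technical obstacle in this program is to correctly identify, in the infinite-dimensional setting, the linear-algebraic datum (closed subspace or isotropic flag) underlying each geometric object, and to verify that the stabilizer of this datum coincides with the stabilizer of the geometric object rather than being merely contained in it; once this identification is secured, the matrix-coefficient computation of the core lemma is essentially immediate.
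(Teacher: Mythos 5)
Your treatment of boundary points is sound and matches the paper: the stabilizer of a point of $\partial\calX_\K(p,\infty)$ equals the stabilizer of the isotropic flag defining the minimal cell containing it, and the stabilizer of a closed linear subspace $W\subseteq\calH$ is cut out by the degree-one standard equations $\langle gx,y\rangle=0$ for $x\in W$, $y\in W^\perp$ (this is Example \ref{ex:algsub}(3) of the paper). The same works for a single point of $\calX_\K(p,\infty)$, which is a positive definite $p$-dimensional subspace.

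The gap is in the first statement for general closed totally geodesic subspaces. Your claimed structure theory --- that every closed totally geodesic $\calY\subseteq\calX_\K(p,\infty)$ corresponds to a non-degenerate closed subspace $W\subseteq\calH$ with $\Stab(\calY)=\Stab(W)$ --- is false as soon as $p\geq 2$. The subspaces of the form $\calX_E$ (all positive $p$-planes inside a non-degenerate $E$) form only a cofinal family among totally geodesic subspaces (Lemma \ref{lem:fdtgs}); a geodesic line, a maximal flat, a diagonally embedded disk in a polydisk, or a real form $\calX_\R(p,q)\subset\calX_\C(p,\infty)$ are totally geodesic but are not of this form, and their stabilizers are strictly smaller than the stabilizer of the linear span of their points (for instance $\OO(E)\times\OO(E^\perp)$ stabilizes $E=\Span(\text{flat})$ but moves the flat inside $\calX_E$). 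This identification is exactly the ``main technical obstacle'' you flag at the end, and it cannot be secured in the form you state. The paper's proof is genuinely different here: it translates so that $o\in\calY$, takes the Lie triple system $\mathfrak p$ of $\calY$, forms the Lie algebra $\mathfrak m=\overline{[\mathfrak p,\mathfrak p]}\oplus\mathfrak p\subseteq\LL^2(\calH)$, and characterizes $\Stab(\calY)$ as $\{h:\Ad(h)\mathfrak m=\mathfrak m\}$, i.e.\ as the stabilizer of a closed subspace of the Hilbert space $\LL^2(\calH)$ under the adjoint action. The resulting equations $\langle hXh^{-1},Y\rangle=0$ are bilinear in $(h,h^{-1})$, not linear in $h$, so your degree-one core lemma does not apply to them; one must separately verify, as the paper does by expanding in the basis $E_{i,j}$ and computing $\langle ME_{i,j}N,E_{k,l}\rangle=\langle Me_i,e_k\rangle\langle e_j,Ne_l\rangle$, that they are standard polynomials of degree two.
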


\begin{question}\label{qu:1.5}Is it true that the converse of Proposition \ref{ep} holds? Namely, are geometric density and Zariski-density equivalent? 
It is also possible that one needs to strengthen the definition of  standard algebraic groups in order to ensure that geometric density and Zariski-density are the same.
\end{question}

Finally, we get the existence of the desired boundary maps under geometric or Zariski-density. In the following statement, two linear subspaces are said to be transverse if their intersection is trivial.

\begin{theorem}\label{Shilov}Let  $\Gamma$ be a countable group with a strong boundary $B$ and $p\in\N$. 

If $\Gamma$ acts geometrically densely on $\mathcal X_\K(p,\infty)$ with $p\leq 2$,  then there is a measurable $\Gamma$-equivariant map $\phi\colon B\to\Ii_p$. Moreover, for almost all pair $(b,b’)\in B^2$, $\phi(b)$ and $\phi(b’)$ are transverse.

If $\Gamma\to\PO_\K(p,\infty)$ is a representation with a Zariski-dense image,  then there is a measurable $\Gamma$-equivariant map $\phi\colon B\to\Ii_p$. Moreover, for almost all pair $(b,b’)\in B^2$, $\phi(b)$ and $\phi(b’)$ are transverse.

\end{theorem}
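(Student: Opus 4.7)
The plan is to produce $\phi$ in two stages: first, construct a measurable $\Gamma$-equivariant map $\phi_0\colon B\to\bd\calX_\K(p,\infty)$ into the whole visual boundary; then argue that $\phi_0$ must essentially take values in the Shilov stratum $\calI_p$, and finally that almost every pair of values is transverse.

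For the first stage, I would exploit that $\calX_\K(p,\infty)$ has rank $p<\infty$ and hence finite telescopic dimension. Amenability of the $\Gamma$-action on $B$, combined with the existence theorems for equivariant boundary maps into \cat spaces of finite telescopic dimension from \cite{Duc13} and \cite{BDL}, yields $\phi_0$ as soon as $\Gamma$ has no fixed point at infinity. In the geometrically dense case this is immediate by definition; in the Zariski-dense case it follows from Proposition~\ref{ep}, since any boundary fixed point would force the preimage of $\rho(\Gamma)$ into a proper standard algebraic subgroup.

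The main obstacle is the second stage. The visual boundary $\bd\calX_\K(p,\infty)$ stratifies as a union of $G$-orbits indexed by types in the spherical building structure, with $\calI_p$ being the top Shilov orbit. One must rule out that $\phi_0$ is essentially valued in a strictly smaller stratum. For $p=1$ there is nothing to prove since $\bd\calX=\calI_1$. For $p=2$ under geometric density, the only non-Shilov strata correspond to isotropic lines or to mixed isotropic flags, and in either case the pointwise stabilizer of a generic value of $\phi_0(b)$ preserves a proper totally geodesic subspace of $\calX_\K(2,\infty)$; transferring this constraint through the ergodicity of $B$ yields a $\rho(\Gamma)$-invariant proper totally geodesic subspace, contradicting geometric density. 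In the Zariski-dense case with arbitrary $p$, Proposition~\ref{ep} applies once more: the stabilizer of any non-Shilov boundary point is a proper standard algebraic subgroup, and an essential-value argument based on the ergodicity of $B$ then produces a proper standard algebraic subgroup of $\OO(p,\infty)$ containing the preimage of $\rho(\Gamma)$, contradicting Zariski-density.

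For the transversality statement, the strong boundary property implies $\Gamma$-ergodicity of $B\times B$, so the pair map $(b,b')\mapsto(\phi(b),\phi(b'))$ is essentially valued in a single $G$-orbit of $\calI_p\times\calI_p$. The non-transverse locus is a proper closed $G$-invariant subset whose generic element has stabilizer either preserving a proper totally geodesic subspace of $\calX_\K(p,\infty)$ or contained in a proper standard algebraic subgroup of $\OO(p,\infty)$; a final invocation of the appropriate density hypothesis, exactly as in the second stage, forces $(\phi(b),\phi(b'))$ out of this locus almost everywhere and completes the proof.
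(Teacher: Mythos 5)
Your first stage (existence of a map to the visual boundary, hence by ergodicity to some $\Ii_k$, using amenability, finite telescopic dimension and Proposition~\ref{ep} to exclude fixed points at infinity) matches the paper's starting point. The transversality argument is also essentially salvageable, though the correct mechanism is not that non-transverse pairs have small stabilizers: the paper's Proposition~\ref{cor} splits into three cases by ergodicity on $B\times B$ and, in the orthogonal case, uses Fubini to fix one $b$ and then takes the closed span of $\{\ell_{b'}\}$ over a full-measure $\Gamma$-invariant set, producing an honest proper invariant subspace of an exterior power, which Lemma~\ref{machin} converts into a proper standard algebraic subgroup.

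The genuine gap is in your second stage. You claim that if $\phi_0$ lands in a non-Shilov stratum $\Ii_k$ with $k<p$, then ``transferring this constraint through the ergodicity of $B$'' yields a $\rho(\Gamma)$-invariant proper totally geodesic subspace or a proper standard algebraic subgroup containing $\rho(\Gamma)$. This does not follow. An equivariant map $B\to\Ii_k$ is not invariant; ergodicity only tells you that invariant functions of $\phi_0$ are essentially constant, and the mere fact that each value $\phi_0(b)$ has a proper stabilizer gives you nothing about $\rho(\Gamma)$ itself. If the existence of a map to $\Ii_k$ with $k<p$ contradicted the density hypotheses outright, the theorem would be almost immediate; instead, the paper must genuinely \emph{upgrade} the map. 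It does so by taking $k$ maximal, forming the measurable field of CAT(0) spaces $X_b=\calX(V_b^\bot/V_b,Q)$ fibered over $\Ii_k$ via $\pi\colon\calV_k\to\Ii_k$, and invoking the dichotomy of \cite[Theorem 1.8]{Duc13}: either there is an invariant section of $\{\partial X_b\}$ (which produces a map to $\Ii_{k'}$ with $k'>k$, contradicting maximality of $k$), or there is an invariant Euclidean subfield. Ruling out the latter is the heart of the proof and requires the perspectivity isometries $\sigma_{b,b'}$, a four-case analysis of the convex functions $f_{b,b'}(x)=d(x,\sigma_{b',b}(F_{b'}))$, repeated applications of relative isometric ergodicity to kill the $b'$-dependence, and only then the exterior-power argument against Zariski-density. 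None of this machinery is present or replaced in your proposal. Likewise, in the low-rank geometrically dense case the paper cannot use Lemma~\ref{machin} at the end and instead substitutes explicit configuration arguments (Theorem~\ref{thm:bnd_map_small_rank}) showing that the relevant spans $V_1+V_2$ would be $\Gamma$-invariant; your sketch does not supply a substitute for that step either.
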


\subsection{Geometry of chains} In \cite{MR1509453}, Cartan, introduced a very nice geometry on the boundary $\partial\calX_\C(1,n)$ of the complex hyperbolic space. A \emph{chain} in $\partial\calX_\C(1,n)$ is the boundary of a complex geodesic in $\calX_\C(1,n)$. It is an easy observation that any  two distinct points in $\partial\calX_\C(1,n)$ define a unique chain; moreover, to determine if three points lie in a common chain, one can use a numerical invariant, the so-called \emph{Cartan invariant}. Three points lie in a common chain if and only if they maximize the absolute value of the Cartan invariant. This invariant can be understood as an angle or the oriented area of the associated ideal triangle. See \cite[\S 7.1]{Goldman}.

As in \cite{Poz}, we use a generalization of chains and of the Cartan invariant to prove our rigidity statements. For $p\geq 1$ and $q\in\N\cup\{\infty\}$ with $q\geq p$, we denote by $\Ii_p(p,q)$, or simply $\Ii_p$ if the pair $(p,q)$ is understood, the set of isotropic subspaces of dimension $p$ in $\C^{p+q}$.
A \emph{$p$-chain} (or simply a chain) in $\Ii_p(p,q)$ is the image of a standard embedding of 
$\Ii_p(p,p)$ in $\Ii_p(p,q)$. This corresponds to the choice of a linear subspace of $\C^{p+q}$ where the Hermitian form has signature $(p,p)$. A generalization of the Cartan invariant is realized by the \emph{Bergmann cocycle} $\beta\colon\calI_p^3\to[-p,p]$. Two transverse points in $\calI_p$ determine a unique chain and once again, three points in $\calI_p^3$ that maximize the absolute value of the Bergmann cocycle lie in a common chain.

The strategy of proof of Theorem \ref{thm:hyplat} goes now as follows. We first reduce to geometrically dense representations (Proposition \ref{prop:2}) if needed. Thanks to a now well established  formula in bounded cohomology (Proposition \ref{prop:chainpres}), we prove that a maximal representation of a lattice $\Gamma\leq\SU(1,n)$ in $\OOO_\C(p,\infty)$ has to preserve the chain geometry and almost surely maps 1-chains to $p$-chains (Corollary \ref{chainstochains}).

\subsection*{Outine of the paper}Section \ref{sec:sym} is devoted to the background on Riemannian and Hermitian symmetric spaces in infinite dimension. Section \ref{sec:alggroup} focuses on algebraic and standard algebraic subgroups where Proposition \ref{ep} is proved. Existence of boundary maps is proved in Section \ref{sec:bndth}. In Section \ref{sec:bndbasics}, we provide a short summary of the basic definitions related to maximal representations, and adapt them in infinite dimension. Section \ref{sec:hyplat} deals with representations in $\PO_\C(p,\infty)$, there we prove Theorem \ref{thm:hyplat}, in Section \ref{sec:bndcohomo} we study representations of fundamental groups of surfaces in $\PO_\R(2,\infty)$ and prove Theorem \ref{thm:dense}. The computation of the Toledo invariant for the variation on the complementary series is carried out in Appendix \ref{Appendix}.

\subsection*{Acknowledgements}We would like to thank J.-L. Clerc for explanations about tube type and non-tube type Hermitian symmetric spaces, and their generalizations in infinite dimension; Y. Benoist for discussions about constructions of geometrically dense representations in infinite dimension; J. Maubon for discussion about maximal representations of surface groups and N. Treib for discussion about concrete realizations of the Bergmann cocycle on the Shilov boundary of the group $\PO_\R(2,n)$.

%\tableofcontents
%%%%%%%%%%%%%%%%
%\section{List of notations}
%%%%%%%%%%%%%%%
%Probably we won't need it in the final version, but it is useful to uniformize the files.
%\begin{itemize}
%\item $\mathcal H$ will be a Hilbert space (over $\R$, $\C$ or $\H$). Whenever needed, we will add the field in the notation: $\mathcal H_\K$
%\item $\OOO(p,q)$ denotes the subgroup of $\GL(\mathcal H)$ consisting of operators preserving the standard Hermitian form of signature $p,q$. The negative index $q$ will most of the time be infinite.
%\item $\calX(p,q)$ (where $q$ is possibly infinite) denotes the symmetric space. Again sometimes we will add the subscript $\calX_\K(p,q)$.  This means that the complex hyperbolic space has the (not super suggestive) notation $\calX_\C(1,n)$.
%\end{itemize}
%The notations I changed (there might be some typoes): 
%\begin{itemize}
%\item I made all symmetric spaces curly to improve legibility. 
%\item I replaced, whenever this was possible $V'$ with $W$. Again this is easier to read, in my opinion
%\end{itemize}
\section{Riemannian  and Hermitian symmetric spaces of infinite dimension}\label{sec:sym}
\subsection{Infinite dimensional symmetric spaces}
In this section, we recall definitions and facts about infinite dimensional Riemannian symmetric spaces. By a Riemannian manifold, we mean a (possibly infinite dimensional) smooth manifold modeled on some real Hilbert space with a smooth Riemannian metric. For a background on infinite dimensional Riemannian manifolds, we refer to \cite{MR1666820} or \cite{MR2243772}. 

\begin{remark}Implicitly, all Hilbert spaces considered in this paper will be separable. In particular, any two Hilbert spaces of infinite dimension over the same field will be isomorphic. The symmetric spaces studied below can be defined as well on non-separable Hilbert spaces but since we will consider representations of countable groups, we can always restrict ourselves to the separable case.
\end{remark}

Let $(M,g)$ be a Riemannian manifold, a \emph{symmetry} at a point $p\in M$ is an involutive isometry  $\sigma_p\colon M\to M$ such that $\sigma_p(p) = p$ and the differential at $p$ is $-\Id$. A \emph{Riemannian symmetric space} is a connected Riemannian manifold such that, at each point, there exists a symmetry. See \cite[\S3]{Duc15} for more details.

We will be interested in infinite dimensional analogs of symmetric spaces of non-compact type. If $(M,g)$ is a symmetric space of non-positive sectional curvature without local Euclidean factor then for any point $p\in M$ the exponential $\exp\colon T_pM\to M$ is a diffeomorphism and, if $d$ is the distance associated to the metric $g$, then $(M,d)$ is a \cat space \cite[Proposition 4.1]{Duc15}. So, such a space $M$ has a very pleasant metric geometry and in particular, it has a visual boundary $\partial M$ at infinity. If $M$ is infinite dimensional then $\partial M$ is not compact for the cone topology.

Let us describe the principal example of infinite dimensional Riemannian symmetric space of non-positive curvature. 

\begin{example}\label{example:GL/O}Let $\calH$ be some real Hilbert space with orthogonal group $\OOO(\mathcal{H})$. We denote by $\LL(\calH)$ the set of bounded operators on $\calH$ and by $\GL(\calH)$ the group of the invertible ones with continuous inverse. If $A\in\LL(\calH)$, we denote its adjoint by $^tA$. An operator $A\in \LL(\calH)$ is \emph{Hilbert-Schmidt} if $\sum_{i,j}\langle Ae_i,Ae_j\rangle^2<\infty$ where $(e_i)$ is some orthonormal basis of $\calH$. We denote by  $\LL^2(\calH)$ the ideal of Hilbert-Schmidt operators  and by $\GL^2(\calH)$ the elements of $\GL(\calH)$ that can be written $\Id+A$ where $A\in\LL^2(\calH)$. This is a subgroup of $\GL(\calH)$: the inverse of $\Id+A$ is $\Id-B$ with $B=A(\Id+A)^{-1}=(\Id+A)^{-1}A\in\LL^2(\calH)$. We also set $\OOO^2(\mathcal{H})=\OOO(\mathcal{H})\cap\GL^2(\mathcal{H})$, and denote by $\SSS^2(\mathcal{H})$ the closed subspace of symmetric operators in  $\LL^2(\calH)$ and by $\PP^2(\mathcal{H})$ the set of symmetric positive definite operators in $\GL^2(\calH)$.

Then $\PP^2(\mathcal{H})$ identifies with the quotient $\GL^2(\calH)/\OOO^2(\mathcal{H})$ under the action of $\GL^2(\calH)$ on $\PP^2(\mathcal{H})$ given by $g\cdot x=gx^tg$ where $g\in\GL^2(\calH)$ and $x\in\PP^2(\calH)$. The space $\PP^2(\mathcal{H})$ is actually a Riemannian manifold, $\GL^2(\calH)$ acts transitively by isometries and the exponential map $\exp\colon\SSS^2(\mathcal{H})\to\PP^2(\mathcal{H})$ is a diffeomorphism. The metric at the origin $o=\Id$ is given by $\langle X,Y\rangle=$Trace$(XY)$ and it has non-positive sectional curvature. Then it is a complete simply-connected Riemannian manifold of non-positive sectional curvature. This is a Riemannian symmetric space and the symmetry at the origin is given by $x\mapsto x^{-1}$. 
\end{example}
A \emph{totally geodesic subspace} of a Riemannian manifold $(M,g)$ is a closed submanifold $N$ such that for any $x\in N$ and $v\in T_xN\setminus\{0\}$, the whole geodesic with direction $v$ is contained in $N$. All the simply connected non-positively curved symmetric spaces that will appear in this paper are totally geodesic subspaces of the space $\PP^2(\mathcal{H})$ described in Example \ref{example:GL/O}.

A \emph{Lie triple system} of $\SSS^2(\mathcal{H})$ is a closed linear subspace $\mathfrak{p}<\SSS^2(\mathcal{H})$ such that for all $X,Y,Z\in\mathfrak{p}$, $[X,[Y,Z]]\in\mathfrak{p}$ where the Lie bracket $[X,Y]$ is simply $XY-YX$. The totally geodesic subspaces $N$ of $\PP^2(\mathcal{H})$ containing $\Id$ are in bijection with the  Lie triple systems $\mathfrak{p}$ of $\SSS^2(\mathcal{H})$. This correspondence is given by $N=\exp(\mathfrak{p})$. See \cite[Proposition III.4]{MR0476820}.

All totally geodesic subspaces of $\PP^2(\mathcal{H})$ are symmetric spaces as well and satisfy a condition of non-positivity of the curvature operator. This condition of non-positivity of the curvature operator allowed a classification of these symmetric spaces  \cite[Theorem 1.8]{Duc15}. In this classification, all the spaces that appear are  the natural analogs of the classical finite dimensional Riemannian symmetric spaces of non-compact type.

The isometry group of a finite dimensional symmetric space is a real algebraic group and thus has a Zariski topology; this is no more available in infinite dimension. Let $(M,g)$ be an irreducible symmetric space of finite dimension and non-positive sectional curvature,  and let $G\leq\Isom(M)$. It is well known that the group $G$ is Zariski-dense if and only if there is no fixed point at infinity nor invariant   totally geodesic  strict subspace (possibly reduced to a point). Thus, following the ideas in \cite{MR2574741}, we say that a group $G$ acting by isometries on a (possibly infinite dimensional) Riemannian symmetric space of non-positive curvature $\calX$ is \emph{geometrically dense} if $G$ has no fixed point in $\partial\calX$ and no invariant closed totally geodesic strict subspace in $\calX$.

\subsection{The Riemannian symmetric spaces $\calX_\K(p,\infty)$}\label{subsec:X(p,q)}
Throughout the paper, $\H$ denotes the division algebra of the quaternions, and $\mathcal {H}$ is a separable Hilbert space over $\K=\R$, $\C$ or $\H$ of infinite dimension. In the latter case the scalar multiplication is understood to be on the right. %, and, only when the specific field will play an important role, we will use the more explicit notation $\mathcal{H}_\K$. 
We denote by $\LL(\mathcal{H})$ the algebra  of all bounded $\K$-linear operators of $\mathcal{H}$,  and $\GL(\mathcal{H})$ is the group of all bounded invertible  $\K$-linear operators with bounded inverse. Using the real Hilbert space $\mathcal{H}_\R$  underlying  $\mathcal{H}$, one can consider  $\GL(\mathcal{H})$ as a closed subgroup of $\GL(\mathcal{H}_\R)$. We denote by $A^*$ the adjoint of $A\in\LL(\calH)$. In particular, when $\K=\R$, $A^*= ^t\!A$.

Let $p\in\N$. We fix an orthonormal basis $(e_i)_{i\in\N}$ of the separable Hilbert space $\mathcal{H}$, and we consider the Hermitian form 
$$Q(x)=\sum_{i=1}^p\ov x_ix_i-\sum_{i>p}\ov x_ix_i$$ where  $x=\sum e_ix_i$. The isometry group of this quadratic form will be denoted $\OO(Q)$ or equivalently $\OO(p,\infty)$. 

The intersection of $\OO(p,\infty)$ and the orthogonal group of $\mathcal{H}$ is isomorphic to 
 $\OO(p)\times\OO(\infty)$, where $\OO(p)$ (resp. $\OO(\infty)$) is the orthogonal group of the separable Hilbert space of dimension $p$ (resp. of infinite dimension).
 Then the quotient
$$\calX=\calX_\mathbf K(p,\infty)=\left. \OO(p,\infty)\middle/\left(\OO(p)\times\OO(\infty)\right)\right.$$ 
has a structure of infinite dimensional irreducible Riemannian symmetric space of non-positive curvature. This can be seen via the identification of $\calX_{\mathbf K}(p,\infty)$ with the set 
$$\mathcal{V}=\{V\leq\calH,\ \dim_\K(V)=p,\ Q|_V>0\}.$$  
The group $\OO(p,\infty)$ acts transitively on $\mathcal{V}$ (by Witt’s theorem) and the stabilizer of the span of the $p$ first vectors is $\OO(p)\times\OO(\infty)$. Moreover, the subgroup $\OOO_\K^2(p,\infty)=\OO(p,\infty)\cap\GL^2(\calH)$ acts also transitively on $\mathcal{V}$ and thus $$\calX_{\mathbf K}(p,\infty)\simeq \mathcal{V}\simeq \left.\OOO_\K^2(p,\infty)\middle/\left(\OO(p)\times\OOO_\K^2(\infty)\right)\right. .$$

The stabilizer of the origin in the action of $\OOO_\K^2(p,\infty)$ on $\PP^2(\calH_\R)$ is exactly $\OO(p)\times\OOO_\K^2(\infty)$ and the orbit of $\OOO_\K^2(p,\infty)$ in $\PP^2(\calH_\R)$ is a totally geodesic subspace \cite[Proposition 2.3]{Duc13}. Thus $\calX_\mathbf K(p,\infty)$ has a structure of simply connected non-positively curved Riemannian symmetric space.

Observe that when $\K=\R$ or $\C$, homotheties act trivially on $\calX_\K(p,\infty)$ and thus the group $\PO_\K(p,\infty)$, defined to be $\OOO_\K(p,\infty)/\{\lambda \Id,\ |\lambda|=1\}$, acts by isometries on $\calX_\K(p,\infty)$. Moreover, it is proved in \cite[Theorem 1.5]{Duc13} that this is exactly the isometry group of $\calX_\K(p,\infty)$ when $\K=\R$.

%\begin{remark}We can follow the same  lines to define the Riemannian symmetric space $$\calX_\K(\infty,\infty)=\left.\OOO_\K^2(\infty,\infty)\middle/\left(\OO^2(\infty)\times\OOO_\K^2(\infty)\right)\right.$$ but in that case the identification with the set of maximal positive definite subspaces does not hold and thus this space does not coincide with the homogeneous space $$\left. \OO(\infty,\infty)\middle/\left(\OO(\infty)\times\OO(\infty)\right)\right..$$
%\end{remark}

\begin{definition}Let $\calX_1,\calX_2$ be two symmetric spaces of type $\calX_\K(p_i,q_i)$  where $p_i\leq q_i\in\N\cup\{\infty\}$ corresponding to Hilbert spaces $\calH_1,\calH_2$ and Hermitian forms $Q_1,Q_2$. By a \emph{standard embedding}, we mean the data of a linear map $f\colon\calH_1\to\calH_2$ such that $Q_2(f(x),f(y))=Q_1(x,y)$ for all $x,y\in\calH_1$. The  group  $\OOO_\K(Q_1)$ embeds in $\OOO_\K(Q_2)$ in the following way: $f$ intertwines the actions on $\calH_1$ and $f(\calH_1)$ and the action is trivial on the orthogonal of $f(\calH_1)$, which is a supplementary of $f(\calH_1)$ since $Q_2$ is non degenerate on $f(\calH_1)$.

Finally the totally geodesic embedding $\calX_1\hookrightarrow\calX_2$ is given by the orbit of the identity under the action of the orthogonal group of $Q_1$.
\end{definition}
%Since these spaces $\calX$ are simply connected Riemannian manifolds of non-positive curvature, they are \cat spaces. In particular, they have a visual boundary at infinity $\partial \calX$.

The spaces $\calX_\K(p,\infty)$, with $p$ finite, are very special among infinite dimensional Riemannian symmetric spaces: they have finite rank, which is $p$. This means there are totally geodesic embeddings of $\R^p$ in $\calX_\mathbf K(p,\infty)$ but there are no totally geodesic embeddings of $\R^q$ for $q>p$. Furthermore, every infinite dimensional irreducible Riemannian symmetric space of non-positive curvature operator and finite rank arises this way \cite{Duc15}.

This finite rank property gives some compactness on $\overline{\calX}=\calX\cup\partial \calX$ for a weaker topology \cite[Remark 1.2]{CL}. Moreover we have the following important property.
\begin{proposition}[{\cite[Proposition 2.6]{Duc13}}]\label{prop:finite_config}Any finite configuration of points, geodesics, points at infinity, flat subspaces of $\calX_\mathbf K(p,\infty)$ is contained in some finite dimensional totally geodesic subspace of $\calX_\mathbf K(p,\infty)$ which is a standard embedding of $\calX_\mathbf K(p,q)$ with $q\in\N$.
\end{proposition}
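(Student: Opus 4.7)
The plan is to use the correspondence between standard embeddings $\calX_\K(p,q)\hookrightarrow\calX_\K(p,\infty)$ and finite-dimensional non-degenerate subspaces $W\subseteq\calH$ of signature $(p,q)$, under which the standard embedding is realized as the set of positive $p$-subspaces of $W$. It therefore suffices to produce a single finite-dimensional $W\subseteq\calH$ of signature $(p,q)$ ``containing'' every object of the configuration in this sense.

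First I would treat each type of object individually, associating to it a finite-dimensional subspace of $\calH$ containing it. A point $x$ is already a positive $p$-dimensional subspace of $\calH$. The unique \cat geodesic segment between two points $x,y$ is contained in the totally geodesic subspace determined by the subspace $x+y\subseteq\calH$, which has dimension at most $2p$. For a flat subspace of dimension $k\leq p$ through a basepoint $o$, one uses the block structure of the tangent space $\mathfrak{p}$ at $o$: elements of $\mathfrak{p}$ are self-adjoint operators interchanging the $p$-dimensional positive factor $\calH^+$ and the negative factor $\calH^-$ of $o$, and any maximal abelian subalgebra of $\mathfrak{p}$ has dimension $p$ and consists of operators supported in $\calH^+\oplus V^-$ for some $V^-\subseteq\calH^-$ of dimension $p$; the flat thus lies in a standard $\calX_\K(p,p)$ corresponding to a $(p,p)$-subspace of $\calH$. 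Finally, a boundary point $\xi$ is represented by a geodesic ray $t\mapsto\exp(tX)\cdot o$ with $X\in\mathfrak{p}$; since $X$ has rank at most $2p$, the ray stays in the finite-dimensional subspace $\calH^++\mathrm{Im}(X)$, which has signature $(p,q_\xi)$ with $q_\xi\leq p$; thus $\xi$ lies in the visual boundary of the corresponding standard embedding of $\calX_\K(p,q_\xi)$ inside $\partial\calX_\K(p,\infty)$.

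Next I would assemble the configuration. Let $U\subseteq\calH$ be the sum of the finite-dimensional subspaces produced above (adding the basepoint $o$ if the configuration contains no point). Then $U$ is finite-dimensional, contains some positive $p$-subspace, hence has positive index at least $p$; and since every subspace of $\calH$ has positive index at most $p$ because $\calH$ does, the positive index of $U$ is exactly $p$. A routine linear algebra step---choosing a basis $r_1,\dots,r_c$ of the radical $U\cap U^\perp$ and, using non-degeneracy of $Q$ on $\calH$, finding $s_1,\dots,s_c\in\calH$ making the matrix $(Q(r_i,s_j))$ invertible---enlarges $U$ to a non-degenerate finite-dimensional $W\subseteq\calH$. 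Its positive index is still exactly $p$, so its signature is $(p,q)$ for some $q\in\N$. The associated standard embedding $\calX_\K(p,q)\hookrightarrow\calX_\K(p,\infty)$ then contains the whole configuration: points as positive $p$-subspaces of $W$, geodesics and flats by \cat uniqueness applied inside the totally geodesic image, and boundary points as asymptotic classes of rays lying in $\calX_\K(p,q)$.

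The main obstacle I expect is the treatment of boundary points, which unlike the other data are not \emph{a priori} elements of a finite-dimensional object. The argument crucially relies on the two structural features of $\calX_\K(p,\infty)$ responsible for its finite rank: the ``positive side'' of any point has dimension exactly $p$, which forces every element of $\mathfrak{p}$ to have rank at most $2p$, and every such element sits in an abelian subalgebra of dimension $p$. Together these force every geodesic ray, hence every boundary point, to be supported in a finite-dimensional totally geodesic subspace of type $\calX_\K(p,p)$, which is precisely what makes the final assembly step possible.
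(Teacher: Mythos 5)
Your argument is correct. Note, however, that the paper does not actually prove this statement: it is quoted from \cite[Proposition 2.6]{Duc13}, so there is no in-paper proof to compare against. Your strategy --- attach to each object of the configuration a finite-dimensional subspace of $\calH$, sum them, and saturate the (possibly degenerate) sum to a non-degenerate subspace of signature $(p,q)$ by pairing a basis of the radical with vectors of $\calH$ --- is exactly the technique the authors themselves use in Lemma \ref{lem:fdtgs}, and your verification that the resulting $W$ is non-degenerate and still of positive index $p$ is sound, as is the reduction of geodesics and boundary points to the totally geodesic subspace $\calX_W$ via convexity and the rank-$\leq 2p$ bound on elements of $\mathfrak{p}$.

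The only step a referee would ask you to expand is the treatment of flats: the assertion that every abelian subspace of $\mathfrak{p}$ consists of operators supported on $\calH^+\oplus V^-$ with $\dim V^-\leq p$ amounts to a simultaneous diagonalization of a commuting family of finite-rank operators, which is true but deserves a proof or a precise reference to the structure theory in \cite{Duc13}. You could bypass it entirely: a $k$-flat $F$ is recovered from any $k+1$ affinely independent points on it, since for a totally geodesic $N\subset\calX_\K(p,\infty)$ the intersection $N\cap F$ contains the full geodesic line through any two of its points and is therefore an affine subspace of $F\cong\R^k$; hence once those $k+1$ points lie in $\calX_W$, so does all of $F$. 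This reduces every object except the points at infinity to finitely many points, and makes the assembly step uniform.
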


The boundary at infinity $\partial \calX_\mathbf K(p,\infty)$ has a structure of a \emph{spherical building}, which we now recall. We refer to \cite{AbramenkoBrown} for general definitions and facts about buildings, and to \cite{Duc13} for the specific case we are interested in. The space $\partial \calX_\mathbf K(p,\infty)$ has a natural structure of a simplicial complex (of dimension $p-1$): a simplex (of dimension $r-1$) in $\partial\calX_\mathbf K(p,\infty)$ is defined by a flag $(V_1\subsetneq\dots\subsetneq V_r)$, where all the $V_i$ are non-zero totally isotropic subspaces of $\mathcal H$. In particular, vertices of $\partial \calX_\mathbf K(p,\infty)$ correspond to totally isotropic subspaces. A simplex $A$ is contained in a simplex $B$ if all the subspaces appearing in the flag $A$ also appear in the flag $B$. 

Each vertex has a \emph{type}, which is a number between $1$ and $p$ given by the dimension of the corresponding isotropic subspace. 
 More generally, the type of a cell is the finite increasing sequence of dimensions of the isotropic subspaces in the associated isotropic flag.  

\begin{definition}
Two vertices of $\partial\calX_\mathbf K(p,\infty)$, corresponding to isotropic spaces $V$ and $W$, are \emph{opposite} if the restriction of $Q$ to $V+W$ is non-degenerate, which means $W\cap V^\bot=0$. 

Two simplices of the same type, corresponding to two flags $(V_1\subset\dots\subset V_r)$ and $(W_1\subset\dots\subset W_r)$ of the same type are opposite if their vertices of the same type are opposite.
\end{definition}

\begin{remark}
In terms of CAT(0) geometry, we note that two vertices of $\partial\calX_\mathbf K(p,\infty)$ of the same type are opposite if and only if they are joined by a geodesic line in $\calX_\mathbf K(p,\infty)$.
For vertices of dimension $p$, opposition is equivalent to transversality: two vertices $V,W$  with $\dim(V)=\dim(W)=p$ are opposite if and only if $V\cap W=0$.
\end{remark}

%{\color{blue}We denote by $\GL^2(\mathcal{H})$ the group of invertible operators of $\mathcal{H}$ that are Hilbert-Schmidt perturbations of the identity (i.e. of the form $\Id+H$ where $H$ is a Hilbert-Schmidt operator).  The Hilbert-Schmidt norm $||A ||_2=\tr( A^*A)$ yields a metric group topology on $\GL^2(\mathcal{H})$ thanks to the formula $d(G_1,G_2)=||G_1-G_2||_2$. This topology is stronger than the restriction of the norm topology on $\GL^2(\mathcal{H})$. In particular, $\OO^2(p,\infty):=\OO(p,\infty)\cap\GL^2(\mathcal{H})$ acts also continuously on $\calX$ and its boundary. Observe that $\OO^2(p,\infty)$ is a normal subgroup of $\OO(p,\infty)$ since the space of Hilbert-Schmidt operators is an ideal of $\LL(\mathcal{H})$.\footnote{BP: Do we really need this? Part was already before}}
\subsection{Hermitian symmetric spaces}\label{sec:Hermitian}

Let $(M,g)$ be a Riemannian manifold (possibly of infinite dimension). An \emph{almost complex structure} is a $(1,1)$-tensor $J$ such that for any vector field $X$, $J(J(X))=-X$.  A triple $(M,g,J)$, where $(M,g)$ is a Riemannian manifold and $J$ is an almost complex structure, is a \emph{Hermitian manifold} if for all vector fields $X,Y$, $g(J(X),J(Y))=g(X,Y)$. If $(M,g,J)$ is a Hermitian manifold, we define a  2-form $\omega$ by the formula $\omega(X,Y)=g(J(X),Y)$.  A \emph{Kähler manifold} is a Hermitian manifold such that $d\omega=0$ and $\omega$ is the \emph{Kähler form} on $M$.

Let $(M,g,J)$ be a Hermitian manifold and $\nabla$ be the Levi-Civita connection associated to the Riemannian metric $g$. The almost-complex structure $J$ is parallel if $\nabla J=0$ that is if for all vector fields $X,Y$, $\nabla_X(JY)=J(\nabla_XY)$. This parallelism condition implies that $\omega$ is parallel as well, that is for all vector fields $X,Y,Z$, $(\nabla_X\omega)(Y,Z)=0$. Since $d\omega (X,Y,Z)=(\nabla_X\omega)(Y,Z)-(\nabla_Y\omega)(X,Z)+(\nabla_Z\omega)(X,Y)$, the parallelism condition $\nabla J=0$ implies that $\omega$ is closed. 

Let  $N$ be the Nijenhuis (2,0)-tensor on $M$, that is for all vector fields $X,Y$,
$$N(X,Y)=2\left([X,Y]-[J(X),J(Y)]-J[J(X),Y]-J[X,J(Y)]\right).$$
The parallelism of $J$ implies that this tensor vanishes.  An almost complex structure $J$ with vanishing Nijenhuis tensor is called a \emph{complex structure}. Thus a parallel almost complex structure is a complex structure.

\begin{definition}Let $(M,g)$ be a simply connected Riemannian symmetric space of non-positive sectional curvature. The symmetric space $M$ is said to be a \emph{Hermitian symmetric space} if it admits a Hermitian almost complex structure $J$ that is invariant under symmetries. This means that for any $p,q\in M$, $$d\sigma_p\circ J_{\sigma_p(q)}\circ d\sigma_p=J_q$$ on the tangent space $T_qM$. One also says that the symmetries are holomorphic. 
\end{definition}

Let us recall a few notations in $\PP^2(\mathcal{H})$. We denote by  $o$ the origin in $\PP^2(\mathcal{H})$, that is the identity $\Id$ of $\calH$. The symmetry $\sigma_o$ at the origin is the map $x\mapsto x^{-1}$. The action $\tau$ of $\GL^2(\calH)$ on $\PP^2(\calH)$ is given by $\tau(g)(x)=gx^tg$. In particular, one has the relation
$$\sigma_o\circ\tau(g)=\tau(g^{-1})\circ\sigma_o.$$
 The exponential map $\exp\colon\LL^2(\calH)\to\GL^2(\calH)$ is a local diffeomorphism around the origin and it induces a diffeomorphism $\exp\colon\SSS^2(\calH)\to\PP^2(\calH)$. In particular, we identify the tangent space at the origin $T_o\PP^2(\calH)$ with the Hilbert space $\SSS^2(\calH)$. The isotropy group of the origin, that is the fixator of $o$,  is $\OOO^2(\calH)$. It acts also on $\SSS^2(\calH)$ by $g\cdot v=gv^tg$ and one has $g\exp(v)^tg=\exp(gv^tg)$ for all $v\in\SSS^2(\calH)$ and $g\in\OOO^2(\calH)$. If $K$ is a subgroup of $\OOO^2(\calH)$, we denote by $K^*$ its image in the isometry group of $\SSS^2(\calH)$. 

The following proposition is a mere extension of a classical statement in finite dimension to our infinite dimensional setting (see for example \cite[Proposition VIII.4.2]{MR1834454}). It can be proved with the same methods.

\begin{proposition}\label{prop:complex_structure}Let $(M,g)$ be a totally geodesic subspace of the symmetric space $\PP^2(\mathcal{H})$ containing $o$ (the identity element) and corresponding to the Lie triple system $\mathfrak{p}$. Let $G$ be the connected component of the stabilizer of $M$ in $\GL^2(\mathcal{H})$ and let $K$ be the isotropy subgroup of $o$ in $G$. Assume there is an operator $J_0\colon \mathfrak{p}\to \mathfrak{p}$ such that
\begin{enumerate}
\item $J_0^2=-\Id$,
\item $J_0$ is an isometry and
\item $J_0$ commutes with all elements of $K^*$ .
\end{enumerate}
Then there is a unique $G$-invariant almost complex structure $J$ on $M$ which coincides with $J_0$ on $T_oM$. Moreover, $J$ is Hermitian and parallel. Thus, $(M,g,J)$ is a Hermitian symmetric space and a Kähler manifold.
\end{proposition}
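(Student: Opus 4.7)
The plan is to extend $J_0$ to a $G$-invariant almost-complex structure by transporting it via the transitive action $\tau$ of $G$ on $M$, and then to read off all remaining properties from the standard Riemannian symmetric-space calculus, whose finite-dimensional arguments transfer to our Hilbert setting without modification.

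For the construction, any $p \in M$ can be written as $p = \tau(g)\, o$ for some $g \in G$, and I would set
\[
J_p \;=\; d\tau(g)_o \circ J_0 \circ (d\tau(g)_o)^{-1},
\]
viewed as an endomorphism of $T_p M$. If $\tau(g_1)o = \tau(g_2)o$, then $k := g_1^{-1} g_2 \in K$ and the formula is independent of the choice of $g$ precisely when $d\tau(k)_o$ commutes with $J_0$ on $\mathfrak{p}$. Since $d\tau(k)_o$ acting on $T_o M \simeq \mathfrak{p}$ is by definition the element of $K^*$ attached to $k$, well-definedness follows from hypothesis (3). The map $p \mapsto J_p$ is smooth because $\tau$ is smooth and $G \to M$, $g \mapsto \tau(g) o$, admits local smooth sections, and $G$-invariance is built into the definition. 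Because $d\tau(g)_o$ is a linear isometry $T_o M \to T_p M$, hypotheses (1) and (2) transport pointwise to $J_p^2 = -\Id$ and $g_p(J_p X, J_p Y) = g_p(X,Y)$, so $(M,g,J)$ is already Hermitian.

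For parallelism I would invoke the classical fact that, in a Riemannian symmetric space, parallel transport along a geodesic is realised by a one-parameter family of transvections: if $\gamma(t) = \tau(\exp(tX)) o$ for $X \in \mathfrak{p}$, then $d\tau(\exp(tX))_o$ is the Levi-Civita parallel transport $T_o M \to T_{\gamma(t)} M$, as in \cite[Prop.~VIII.4.2]{MR1834454}. The proof relies only on the Lie-triple-system structure of $\mathfrak{p}$ and on $M$ being totally geodesic in $\PP^2(\calH)$ with the induced Riemannian metric, so it carries over verbatim in infinite dimension. Since $J$ is $G$-invariant, it commutes with every such transvection, hence with parallel transport along every geodesic issued from $o$; homogeneity then gives $\nabla J = 0$ on all of $M$.

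Finally, $\nabla J = 0$ forces the Nijenhuis tensor $N(X,Y) = 2([X,Y]-[JX,JY]-J[JX,Y]-J[X,JY])$ to vanish identically, so $J$ is a genuine complex structure; and the associated $2$-form $\omega(X,Y) = g(JX,Y)$ is parallel, hence closed by the identity $d\omega(X,Y,Z) = (\nabla_X\omega)(Y,Z) - (\nabla_Y\omega)(X,Z) + (\nabla_Z\omega)(X,Y)$ recalled in the excerpt, so $(M,g,J)$ is Kähler. The main obstacle is precisely the transvection/parallel-transport identification: the finite-dimensional proof is purely formal but one must know that the Levi-Civita connection on $\PP^2(\calH)$ and its totally geodesic subspaces is well-defined and ``behaves like'' the finite-dimensional one, which is guaranteed by the Hilbert-Schmidt framework of Example~\ref{example:GL/O}; everything else is a direct transcription of the classical homogeneous-space argument.
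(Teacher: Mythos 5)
Your construction of $J$ by transporting $J_0$ with $d\tau(g)$ and using hypothesis (3) for well-definedness is exactly the paper's, and your derivation of parallelism from the identification of transvections with parallel transport is essentially the paper's argument as well. The only point you leave implicit is the invariance of $J$ under the geodesic symmetries $\sigma_p$, which the paper's definition of a Hermitian symmetric space explicitly requires and which the paper checks by a direct computation using $\sigma_o\circ\tau(g)=\tau(g^{-1})\circ\sigma_o$; this is not a gap, since it follows from the parallelism you establish together with the fact that $d\sigma_o$ at a point $q$ is $-\Id$ composed with parallel transport along the geodesic through $o$.
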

%\begin{proof}
%The uniqueness and the Hermitian property are straightforward consequences of the fact that $G$ acts transitively by isometries on $M$ (since it contains all transvections between points in $M$). Assume that an operator $J$ with the above properties has been constructed. Since parallel transport is realized by the differential of transvections and $J$ is invariant under symmetries then $J$ is parallel. Thus, $M$ is a Hermitian symmetric space, $J$ is a complex structure and $\omega$ is closed. More precisely, if $x,y$ belong to $M$ and $m$ is the midpoint between $x$ and $y$ then the transvection from $x$ to $y$ is $\sigma_m\circ\sigma_x$. Its differential realizes parallel transport from $T_xM$ to $T_yM$.
%
%Let us construct $J$. Let $p\in M$ and choose $g\in G$ such that $g\cdot o=g^tg=p$. We define $J$ on $T_pM$ to be $d\tau(g)\circ J_0\circ d\tau(g^{-1})$. This does not depend on the choice of $g$ since $J_0$ commutes with all elements of $K^*$. Now, for $p\in M$ and $g\in\GL^2(\calH)$ such that $p=g^tg$,
%\begin{align*}
%(d\sigma_o)_p\circ J_p&=d\sigma_o\circ d\tau(g)\circ J_0\circ d\tau(g^{-1})\\
%&=d\tau(g^{-1})\circ d\sigma_o\circ J_0 \circ d\tau(g^{-1})\\
%&=d\tau(^tg^{-1})\circ  J_0\circ d\sigma_o \circ d\tau(g^{-1})\\
%&=d\tau(^tg^{-1})\circ  J_0 \circ d\tau(^tg)\circ d\sigma_o\\
%&=J_{\sigma_o(p)}\circ (d\sigma_o)_p.
%\end{align*}
%This shows that $\sigma_o$ preserves the almost complex structure and thus any other symmetry $\sigma_p$ preserves this structure as well. So $(M,g,J)$ is a Hermitian symmetric space.
%\end{proof}

\begin{remark} It is well-known that a finite dimensional manifold with a complex structure $J$ is a complex manifold, that is a manifold modeled on $\C^n$ with holomorphic transition maps. The same result does not hold in full generality for infinite dimensional manifolds but in the case of real analytic Banach manifolds the result still holds (\cite[Theorem 7]{MR2157347}). The Hermitian symmetric spaces we consider have a real analytic complex structure and thus are complex manifolds. Nonetheless, we will not need this result. 
\end{remark}

In the remaining of this section, we exhibit the complex structures $J$ on two classes of Hermitian symmetric spaces we will use later in the paper. Thanks to Proposition~\ref{prop:complex_structure}, it suffices to find $J_0$ with the required properties. The complex structures we describe are all the natural analogs of the corresponding complex structures in finite dimension.

Below, we use orthogonal decompositions $\calH=V\oplus W$ and block decompositions for elements of $\LL(\calH)$. When we write $g=\left[\begin{array}{cc}
A &B\\ C & D
\end{array}\right]$ this means that $A=\pi_V\circ g|_V\in \LL(V)$, $B=\pi_V\circ g|_W\in \LL(W,V)$, $C=\pi_W\circ g|_V\in \LL(V,W)$ and $D=\pi_W\circ g|_W\in \LL(W)$.

\subsubsection*{The Hermitian symmetric space $\calX_\C(p,\infty)$.} Let $\mathcal{H}$ be a complex Hilbert space of infinite dimension. We denote by $\calH_\R$ the underlying real Hilbert space. Let $V,W$ be closed orthogonal complex subspaces of dimension $p\in\N\cup\{\infty\}$ and $\infty$ such that $\calH=V\oplus W$. Let $I_{p,\infty}=\Id_V\oplus-\Id_W$. Thus 
$$\OOO^2_\C(p,\infty)=\{g\in\GL^2(\calH),\ g^*I_{p,\infty}g=I_{p,\infty}\}.$$
The symmetric space $\calX_\C(p,\infty)$ is the $\OOO^2_\C(p,\infty)$-orbit of the identity in $\PP^2(\calH_\R)$, that is the image under the exponential map of  the Lie triple system
$$\mathfrak p=\left\{\left[\begin{array}{cc}0&A\\A^*&0\end{array}\right],\ A\in\LL^2(W,V)\right\}.$$
The complex structure is induced by the endomorphism $J_0$ of $\mathfrak p$ defined by $J_0\left[\begin{array}{cc}0&A\\A^*&0\end{array}\right]=\left[\begin{array}{cc}0&iA\\-iA^*&0\end{array}\right]$.
Since the stabilizer of $\Id_{\calH_\R}$ in $\OOO_\C(p,\infty)$ is given by all the operators that can be expressed as $\left[\begin{array}{cc}P&0\\0&Q\end{array}\right]$ with $P\in\OOO^2_\C(V)$ and $Q\in\OOO^2_\C(W)$, $J_0$ satisfies the conditions of Proposition~\ref{prop:complex_structure}.

\subsubsection*{The Hermitian symmetric space $\calX_\R(2,\infty)$.}
 Let $\calH$ be a real Hilbert space of infinite dimension. Let $V,W$ be closed orthogonal subspaces of dimension $2$ and $\infty$ such that $\calH=V\oplus W$. Let $I_{2,\infty}=\Id_V\oplus-\Id_W$. Thus 
$$\OOO^2_\R(2,\infty)=\{g\in\GL^2(\calH),\ ^tgI_{2,\infty}g=I_{2,\infty}\}.$$
The symmetric space $\calX_\R(2,\infty)$ is the $\OOO^2_\R(2,\infty)$-orbit of the identity in $\PP^2(\calH)$, that is the image under the exponential map of the Lie triple system
$$\mathfrak p=\left\{\left[\begin{array}{cc}0&A\\^tA&0\end{array}\right],\ A\in\LL(W,V)\right\}.$$
Fix some orthonormal basis $(e_1,e_2)$ of $V$ and let $I=\left[\begin{array}{cc}0&-1\\1&0\end{array}\right]$. This element belongs to the group $\SO_\R(2)$, which is commutative. The complex structure is defined by $J_0\left[\begin{array}{cc}0&A\\^tA&0\end{array}\right]=\left[\begin{array}{cc}0&IA\\-^t(IA)&0\end{array}\right]$. Since the stabilizer of $\Id_\calH$ in the identity component of $\OOO^2_\R(2,\infty)$ is given by operators of the form $\left[\begin{array}{cc}P&0\\0&Q\end{array}\right]$ with $P\in\SO_\R(2)$ and $Q\in\OOO^2_\R(\infty)$. Let us denote by $\OOO^{+}_\R(2,\infty)$ the set of elements in $\OOO_\R(2,\infty)$ of the form  $\left[\begin{array}{cc}
A &B\\ C & D
\end{array}\right]$  where $A$ has positive determinant. So there is  a $\OOO^{+}_\R(2,\infty)$-invariant complex structure on $\calX_\R(2,\infty)$. Let us denote by $\PO_\R^+(2,\infty)$ the image of  $\OOO^{+}_\R(2,\infty)$ under the quotient map $\OOO_\R(2,\infty)\to\PO_\R(2,\infty)$.

\subsection{Tube type Hermitian symmetric spaces}\label{sec:tube}

In finite dimension, the class of Hermitian symmetric spaces splits into two classes: those of tube type and those that are not of tube type. This distinction is important for the approach we use to understand maximal representations.  For a definition in finite dimension, we refer to  \cite{BIWtight}.
Let us briefly recall that if $\calX$  is a finite dimensional Hermitian symmetric space, a \emph{chain} is the boundary (as a subset of the Shilov boundary of $\calX$) of a maximal tube type subspace. By definition, if $\calX$ is of tube type, there is a unique maximal tube type subspace: $\calX$ itself. But if $\calX$ is not of tube type, chains lie in a unique $\Isom(\calX)$-orbit and it yields a new incidence geometry: the chain geometry (see \cite[\S3]{Poz}). Let us give an ad hoc definition of tube type Hermitian symmetric spaces in infinite dimension. 

\begin{definition} An irreducible Hermitian symmetric space is \emph{of tube type} if there is  a dense increasing union of tube type finite dimensional totally geodesic holomorphic Hermitian symmetric subspaces. 
\end{definition}

\begin{lemma} The Hermitian symmetric spaces $\calX_\C(\infty,\infty)$, $\calX_\R(2,\infty)$, $\Sp^2(\calH)/\U^2(\calH)$ and the space $\OOO^{*2}(\infty)/\U^2(\infty)$ are of tube type.

The Hermitian symmetric space $\calX_\C(p,\infty)$ with $p<\infty$ is not of tube type.
\end{lemma}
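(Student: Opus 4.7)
The plan is to establish the four tube type claims by exhibiting explicit dense increasing filtrations by finite dimensional tube type holomorphic totally geodesic Hermitian symmetric subspaces, and to rule out such a filtration for $\calX_\C(p,\infty)$ with $p$ finite via a uniform dimension bound.

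For $\calX_\C(\infty,\infty)$, I fix orthonormal bases $(v_n)_n$ of $V$ and $(w_n)_n$ of $W$, and for each $n$ set $V_n=\mathrm{span}_\C(v_1,\dots,v_n)$, $W_n=\mathrm{span}_\C(w_1,\dots,w_n)$, and $\calH_n=V_n\oplus W_n$. The indefinite Hermitian form restricts to signature $(n,n)$ on $\calH_n$, producing a standard embedding $\calX_\C(n,n)\hookrightarrow \calX_\C(\infty,\infty)$; the corresponding Lie triple system consists of the block matrices with zero diagonal and off-diagonal blocks $A\in L(W_n,V_n)$ and $A^*$. Since $J_0$ acts by multiplication by $i$ on the $A$-slot, it preserves this Lie triple system, making the embedding holomorphic; moreover $\calX_\C(n,n)$ is tube type in finite dimension. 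The union of these Lie triple systems is precisely the space of finite rank operators, which is norm-dense in $L^2(W,V)=\mathfrak{p}$, and the exponential map carries this density to $\calX_\C(\infty,\infty)$. The three remaining tube type cases follow the same template. For $\calX_\R(2,\infty)$ keep $V$ fixed (2-dimensional) and let $W_n \nearrow W$, yielding the tube type $\calX_\R(2,n)$ of Type IV. For $\Sp^2(\calH)/\U^2(\calH)$ take $V_n \nearrow V$ and $W_n=I(V_n)$, producing the finite Siegel space $\Sp(2n,\R)/\U(n)$ of Type III. For $\OOO^{*2}(\infty)/\U^2(\infty)$ take $V_n\nearrow V$ of \emph{even} complex dimension $2n$ and $W_n=I(V_n)$, producing $\SO^*(4n)/\U(2n)$ of Type II; the parity choice is essential to avoid the non-tube type $\SO^*(4n+2)/\U(2n+1)$. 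In each case the complex structure $J_0$ preserves the approximating Lie triple system because the defining block decomposition restricts compatibly, and density follows from density of finite rank operators in the appropriate Hilbert–Schmidt space.

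For the negative assertion, suppose for contradiction that $\calX_\C(p,\infty)$ with $p<\infty$ admitted a dense increasing sequence $(\calY_i)$ of finite dimensional tube type holomorphic totally geodesic Hermitian symmetric subspaces. By Proposition~\ref{prop:finite_config} each $\calY_i$ is contained in a standard embedded copy of some $\calX_\C(p,q_i)$ with $q_i\in\N$. The Satake classification of holomorphic embeddings of Hermitian symmetric pairs then yields a uniform bound $\dim_\C \calY_i\leq C(p)$ depending only on $p$ — essentially because, up to conjugation, any tube type Hermitian symmetric subgroup of $\SU(p,q_i)$ is contained in $\SU(p,p)\subset\SU(p,q_i)$, whose associated symmetric space $\calX_\C(p,p)$ has complex dimension $p^2$. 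Hence $\bigcup_i \calY_i$ has bounded finite real dimension and cannot be dense in the infinite dimensional $\calX_\C(p,\infty)$, a contradiction.

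The main obstacle is the uniform bound $\dim_\C \calY_i\leq C(p)$ in the negative direction: the Type IV family of tube type spaces $\calX_\R(2,n)$ has rank $2$ but unbounded dimension, so one must separately verify that their holomorphic embeddings into $\calX_\C(p,q_i)$ force $n$ to be bounded in terms of $p$. This follows either from the full Satake–Ihara classification of holomorphic embeddings or from a direct restricted root argument: the restricted root system of $\calX_\C(p,q_i)$ with $p<q_i$ is of type $BC_p$, while tube type subspaces must have restricted root system of type $C_r$ with $r\leq p$, forcing the short roots of the ambient system to vanish on the subspace and thereby constraining its possible dimensions.
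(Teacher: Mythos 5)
Your proof is correct and follows essentially the same route as the paper: exhibiting the exhaustions by $\calX_\C(n,n)$, $\calX_\R(2,n)$, $\Sp(2n,\R)/\U(n)$ and $\SO^*(4n)/\U(2n)$ for the four tube type cases, and for $\calX_\C(p,\infty)$ using that any finite dimensional tube type holomorphic totally geodesic subspace sits inside a standard $\calX_\C(p,q)$ and hence inside a copy of $\calX_\C(p,p)$, whose bounded dimension rules out a dense increasing union. You supply more detail than the paper (density of finite rank operators, the parity choice for $\SO^*$, and the justification of the uniform dimension bound via the classification of holomorphic embeddings), but the underlying argument is the same.
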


\begin{proof}The four first cases are simply the closure of an increasing union of Hermitian totally geodesic holomorphic subspaces isomorphic to respectively $\calX_\C(n,n),\calX_\R(2,n), \Sp(2n)/\U(n)$ and $\mathrm{SO}^{*}(4n)/\U(2n)$. All those spaces are of tube type.

For $\calX_\C(p,\infty)$ with $p<\infty$, we know that any finite dimensional totally geodesic and holomorphic Hermitian symmetric subspace $Y$ is contained in some standard copy of $\calX_\C(p,q)$ for $q>p$ large enough. In particular, if $Y$ is of tube type then it lies in some standard copy of $\calX_\C(p,p)$ and thus standard copies of $\calX_\C(p,p)$ are maximal finite dimensional Hermitian symmetric subspaces of tube type.\end{proof}

\begin{remark}Among the infinite dimensional Hermitian symmetric spaces of tube type, $\calX_\R(2,\infty)$ is remarkable. This is the only one to be of tube type and of finite rank.
\end{remark}

\begin{remark} A theory of tube type domains and Jordan algebras in infinite dimension has been developed. We refer to \cite{MR0492414} and references for an entrance to this subject. We don’t rely on this theory. %The aim of the introduction of tube type Hermitian symmetric spaces in infinite dimension was only to give an intuition in analogy of what happens in finite dimension and to highlight the difference between the rigidity in Theorem~\ref{prop:surface} and the flexibility in Theorem~\ref{thm:dense}.

\end{remark}

%\end{array}\right]
%%%%%%%%%%%%%%%%
\section{Algebraic groups in infinite dimension}\label{sec:alggroup}
%%%%%%%%%%%%%%%%%%%
\subsection{Algebraic subgroups of bounded operators of Hilbert spaces}
%%%%%%%%%%%%%%%%%%%
Algebraic subgroups of finite dimensional linear Lie groups are well understood and equipped with a useful topology: the Zariski topology. In infinite dimension some new and baffling phenomena may appear. For example, one parameter subgroups may be non continuous. In \cite{HK}, Harris and Kaup introduced the notion of linear algebraic groups and showed that they behave nicely with respect to the exponential map. In particular, the exponential map is a local homeomorphism and any point sufficiently close to the origin lies in some continuous 1-parameter subgroup.

Let $A,B$ be two real Banach algebras and let $G(A)$ be the set of all invertible elements of $A$. A  map $f\colon A\to B$ is a \emph{homogeneous polynomial map} of degree $n$ if there is a continuous $n$-linear map $f_0\in\LL^n(A,B)$ such that for any $a\in A$, $f(a)=f_0(a,\dots,a)$. Now, a map $f\colon A\to B$ is \emph{polynomial} if it is a finite sum of homogeneous polynomial maps. Its \emph{degree} is the maximum of the degrees that appear in the sum.

Let $\calH$ be a real Hilbert space. The Banach algebras we will use are $\LL(\calH)$ endowed with the operator norm and the field of real numbers $\R$. The group of invertible elements in $\LL(\calH)$ is $\GL(\calH)$.

\begin{definition} A subgroup $G$ of $G(A)$ is an \emph{algebraic subgroup} if there is a constant $n$ and a set $\mathcal{P}$ of polynomial maps of degrees at most $n$ on $A\times A$ such that 
\[G=\left\{g\in G(A);\ P(g,g^{-1})=0,\ \forall P\in \mathcal{P}\right\}.\]
\end{definition} 

Observe that $\mathcal{P}$ may be infinite but the degrees of its elements are uniformly bounded. The main result of  \cite{HK} is the fact that an algebraic subgroup is a Banach Lie group (with respect to the norm topology) and that the exponential map  gives a homeomorphism in a neighborhood of the identity.

\begin{remark}\label{rem:zariski}In this context, one could define a generalized Zariski topology by choosing the smallest topology such that zeros of polynomial maps (or standard polynomial maps, see below) are closed. This topology behaves differently from the finite dimensional case because the Noetherian property does not hold. We will not use any such topology.

	Moreover, the intersection of an infinite number of algebraic subgroups has no reason to be an algebraic group. Degrees of the defining polynomials may be unbounded.
\end{remark}
\begin{examples}\label{examples:algebraicgroups}\begin{enumerate}
\item Let $\calH$ be a Hilbert space of infinite dimension over $\C$ and let $\calH_\R$ be the underlying real Hilbert space. Let $I$ be the multiplication by the complex number $i$. Then $I$ is an isometry of $\calH_\R$ of order 2 and 
$$\GL(\calH)=\{g\in\GL(\calH_\R), gI=Ig\}.$$
Since the map $M\mapsto MI-IM$ is linear on $\LL(\calH)$, $\GL(\calH)$ is an algebraic subgroup of $\GL(\calH_\R)$. Similarly, if $\H$ is the field of quaternions and $\calH$ is a Hilbert space over $\H$ (with underlying real Hilbert space $\calH_\R$) then $\GL(\calH)$ is an algebraic subgroup of $\GL(\calH_\R)$.
\item Let $\calH$ be a Hilbert space of infinite dimension over $\K$ and $\calH=V\oplus W$ be an orthogonal splitting where $V$ has dimension $p\in\N$. Let $I_{p,\infty}$ be the linear map $\Id_V\oplus-\Id_W$. By definition the group $\OOO_\K(p,\infty)$ is 
$$\OOO_\K(p,\infty)=\left\{g\in\GL(\calH),g^*I_{p,\infty}g=I_{p,\infty}\right\}$$
and since the map $(L,M)\mapsto L^*I_{p,\infty}M$ is bilinear on $\LL(\calH_\R)\times\LL(\calH_\R)$, the group $\OOO_\K(p,\infty)$ is a (real) algebraic subgroup of $\GL(\calH_\R)$. This is a particular case of \cite[Example 4]{HK}.
\end{enumerate}
\end{examples}

In finite dimension, linear algebraic groups of $\GL_n(\R)$ are given by polynomial equations in matrix coefficients. We generalize this notion to subgroups of $\GL(\mathcal{H})$.

\begin{definition}\label{def:stdalggroup}Let $\calH$ be a real Hilbert space. A \emph{matrix coefficient} is a linear map $f\colon \LL(\calH)\to\R$ such that there are $x,y\in\calH$ such that $f(L)=\langle Lx,y\rangle$ for any $L\in\LL(\calH)$. A homogeneous polynomial map $P$ on $\LL(\calH)\times\LL(\calH)$ of degree $d$ %(respectively on $\LL(\calH)\times\LL(\calH)$) 
is \emph{standard} if there is an orthonormal basis $(e_n)_{n\in\N}$ of $\calH$, non-negative integers $m,l$ such that $d=m+l$ and families of real coefficients $(\lambda_i)_{i\in\N^{2m}}$, $(\mu_j)_{j\in\N^{2l}}$ such that for all $(M,N)\in\LL(\calH)\times\LL(\calH)$, $P(M,N)$ can be written as an absolutely  convergent series
$$P(M,N)=\sum_{i\in\N^{2m},\ j\in\N^{2l}}\lambda_i\mu_jP_i(M)P_j(N)$$
where $P_i(M)=\prod_{k=0}^{m-1}\langle Me_{i_{2k}},e_{i_{2k+1}}\rangle$ for $i\in\N^{2m}$ and similarly $P_j(N)=\prod_{k=0}^{l-1}\langle Ne_{i_{2k}},e_{i_{2k+1}}\rangle$ for $j\in\N^{2l}$.

A polynomial map is standard if it is a finite sum of standard homogeneous polynomial maps. A subgroup of $\GL(\calH)$ is a \emph{standard algebraic subgroup} if it is an algebraic subgroup defined by a family of standard polynomials.
\end{definition}

\begin{examples}\label{ex:algsub}\begin{enumerate}
\item Any matrix coefficient is a standard homogeneous polynomial map of degree 1. For $x,y\in\calH$ and any orthonormal basis $(e_n)_{n\in\N}$, we define $x_n=\langle x,e_n\rangle$ and similarly $y_n=\langle y,e_n\rangle$ for any $n\in\N$. Then the matrix coefficient $P(M)=\langle Mx,y\rangle$ is given by the series  
$$\sum_{i,j\in\N}x_iy_j\langle Me_i,e_j\rangle.$$
For any finite subset $K\subset \N^2$ finite containing $\{1,\dots,n\}^2$,
$$\left|\sum_{i,j\leq n}x_iy_j\langle Me_i,e_j\rangle-\langle Mx,y\rangle\right|\leq
||M|| \left(\Vert x\Vert  \Vert \pi_n(y)-y\Vert+\Vert y\Vert \Vert \pi_n(x)-x\Vert \right)$$
where $\pi_n(x)$ is the projection on the space spanned by the $n$ first vectors of the basis. This implies that the series is absolutely convergent (see \cite[VII-3-\S8]{MR0253266}).

\item The group $\OOO_\K(p,\infty)$ is not only an algebraic group, it is also a standard algebraic group. Let $(e_i)_{i\in\N}$ be an orthonormal basis of $\calH$ adapted to the decomposition $\calH=V\oplus W$ as in Example \ref{examples:algebraicgroups}. An element $g\in\GL(\calH)$ is in $\OOO_\K(p,\infty)$ if and only if $\langle I_{p,\infty}ge_i,ge_j\rangle=\langle I_{p,\infty}e_i,e_j\rangle$ for any $i,j\in\N$. Since
$\langle I_{p,\infty}ge_i,ge_j\rangle=\sum_{k\in\N}\langle ge_i,I_{p,\infty}e_k\rangle\langle g e_j,e_k\rangle$ and the coefficient $\langle ge_i,I_{p,\infty}e_k\rangle$ is $\varepsilon_k\langle ge_i,e_k\rangle$ with $\varepsilon_k=-1$ for $m\leq p$ and $\varepsilon_k=1$ for $m\geq p$, we see that $\OOO_\K(p,\infty)$  is a standard algebraic group.

\item Let $\calH$ be a real Hilbert space and $V<\calH$ be a closed subspace then $H=\Stab(V)$ is a standard algebraic subgroup of $\GL(\calH)$. Actually,
$$H=\left\{g\in\GL(\calH),\ \langle gx,y\rangle=0,\ \forall x\in V,y\in V^\bot\right\}$$
and thus is a standard algebraic subgroup of $\GL(\calH)$.

It follows that stabilizers of simplices of the building at infinity of $\calX_\K(p,\infty)$ are standard algebraic subgroups of $\OOO_\K(p,\infty)$. Moreover if $\xi$ is a point at infinity, its stabilizer coincides with the stabilizer of the minimal simplex that contains it. See \cite[Proposition 6.1]{Duc13} for details  in the real case. The same argument works as well over $\C$ and $\H$. In particular, stabilizers of points at infinity are standard algebraic subgroups.
\end{enumerate}
\end{examples}

Let $H$ be a standard algebraic subgroup of $\GL(\calH)$. If $E$ is a finite dimensional subspace of $\calH$, we denote by $H_E$ the subgroup of elements $g\in H$ such that $g(E)=E$, $g|_{E^\bot}=\Id$. We identify $H_E$ with an algebraic subgroup of $\GL(E)$. By a \emph{strict algebraic group} $H$, we mean that $H$ is algebraic and $ H\neq \GL(\calH)$.
\begin{lemma}\label{lem:nt} If $H$ is a strict standard algebraic subgroup then there is a finite dimensional subspace $E\subset \calH$ such that $H_E$ is a strict algebraic subgroup of $\GL(E)$.
\end{lemma}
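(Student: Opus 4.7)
The idea is to take $E = V_N := \Span(e_0, \ldots, e_{N-1})$ for $N$ large, where $(e_n)$ is the orthonormal basis fixed in the definition of standard polynomials. Since the defining standard polynomials of $H$ become polynomial equations of degree at most $n$ in the entries of $h$ once specialised to $\widetilde h := h \oplus \Id_{V_N^\perp}$, the subgroup $H_{V_N}$ is automatically algebraic in $\GL(V_N)$; it remains to produce some $h \in \GL(V_N)$ with $\widetilde h \notin H$, which will force strictness.

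By strictness of $H$ there exist $g_0 \in \GL(\mathcal{H}) \setminus H$ and a defining standard polynomial $P$ with $P(g_0, g_0^{-1}) \neq 0$. Write $P = \sum_\beta P_\beta$ as its finite sum of standard homogeneous summands; by absolute convergence and Fubini, each summand factorises as $P_\beta(M, N) = R_\beta(M)\, S_\beta(N)$ with $R_\beta, S_\beta$ standard homogeneous of degrees $m_\beta$ and $l_\beta$. The central observation is that the matrix coefficients $\langle \widetilde h\, e_a, e_b\rangle$ equal the entries of $h$ when $a, b < N$, equal $\delta_{ab}$ when $a, b \geq N$, and vanish otherwise. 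Hence only ``adapted'' tuples $i$ — those whose pairs $(i_{2k}, i_{2k+1})$ are simultaneously $< N$, or simultaneously $\geq N$ with equal entries — contribute to $R_\beta(\widetilde h)$; partitioning such tuples by the subset $I \subseteq \{0, \ldots, m_\beta - 1\}$ of pairs inside $V_N$ and using absolute convergence to sum the outside indices out, one obtains
\[
R_\beta(\widetilde h) \;=\; R_\beta^{(N)}(h) \;+\; \bigl(\text{polynomial of degree } < m_\beta \text{ in the entries of } h\bigr),
\]
where $R_\beta^{(N)}$ is the truncation of $R_\beta$ to tuples with all indices in $\{0, \ldots, N-1\}$. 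Thus the degree-$m_\beta$ part of the polynomial $h \mapsto R_\beta(\widetilde h)$ is exactly $R_\beta^{(N)}$; a parallel analysis applies to $S_\beta(\widetilde h^{-1})$. Absolute convergence of the defining series yields $R_\beta^{(N)}(g_0) \to R_\beta(g_0)$, so for $N$ large the polynomials $R_\beta^{(N)}$ (viewed on $\LL(V_N)$) are non-zero for those $\beta$ with $R_\beta(g_0) \neq 0$.

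To obtain the non-vanishing of $P(\widetilde h, \widetilde h^{-1})$ itself, I would introduce the scaling $h \mapsto t h$ and view $P(\widetilde{th}, \widetilde{th}^{-1})$ as a Laurent polynomial in $t$ whose coefficients are rational functions of $h$ on $\GL(V_N)$; the coefficient of the top power $t^{p_{\max}}$, with $p_{\max} = \max_\beta m_\beta$, equals $\sum_{\beta\,:\, m_\beta = p_{\max}} S_\beta^{[0, N]}\, R_\beta^{(N)}(h)$, where $S_\beta^{[0, N]} := \sum_{j\,:\, j_{2k} = j_{2k+1} \geq N} \mu_j^\beta$ are scalar tail sums. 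These tail sums go to zero as $N \to \infty$ by absolute convergence of the series defining $S_\beta$; combined with $R_\beta^{(N)}(g_0) \to R_\beta(g_0)$ and the hypothesis $\sum_\beta R_\beta(g_0)\, S_\beta(g_0^{-1}) \neq 0$, propagating the non-vanishing through successively lower powers of $t$ yields some $N$ for which at least one Laurent coefficient is a non-trivial rational function of $h$. Then $P(\widetilde{th}, \widetilde{th}^{-1}) \neq 0$ for Zariski-generic $(t, h)$, which produces the desired $h \in \GL(V_N)$ with $\widetilde h \notin H$.

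The main obstacle lies in the last propagation step. The bi-homogeneous components $P_\beta$ need not individually vanish on $H$ — a subgroup is closed under composition and inversion but not under scalar scaling — so one cannot simply isolate a single $\beta$ with $P_\beta(g_0, g_0^{-1}) \neq 0$ and treat $P_\beta$ as a defining polynomial of $H$. One must therefore argue with the full sum $P = \sum_\beta P_\beta$, exploiting the joint asymptotics of the tail scalars $S_\beta^{[0, N]} \to 0$ and the truncations $R_\beta^{(N)} \to R_\beta$ at $g_0$ to prevent the various bi-homogeneous contributions from conspiring to cancel each other identically in $h$ for every $N$; this is precisely where the ``standard'' structure of the defining polynomials is essential, since it supplies both the factorisation $P_\beta = R_\beta S_\beta$ and the absolutely convergent truncations that allow the degree and tail analysis to be carried out.
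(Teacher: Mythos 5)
Your overall strategy coincides with the paper's: take $E$ to be the span of an initial segment of the distinguished basis, note that $H_E$ is then automatically an algebraic subgroup of $\GL(E)$ because the defining standard series collapse to genuine polynomials in the entries of $h$ and $h^{-1}$, and reduce everything to producing some $h\in\GL(E)$ with $\widetilde h\notin H$. The problem is that this last step is never actually carried out, and you say so yourself. The Laurent expansion in $t$ gives you a top coefficient $\sum_{\beta}S_\beta^{[0,N]}R_\beta^{(N)}(h)$ whose scalar weights $S_\beta^{[0,N]}$ tend to $0$, whereas the only non-vanishing you have in hand, $\sum_\beta R_\beta(g_0)S_\beta(g_0^{-1})\neq 0$, involves the values $S_\beta(g_0^{-1})$, which are of a different nature; nothing in the write-up converts the latter into the non-vanishing, for some $N$ and some Laurent degree, of a coefficient of the former kind, and nothing excludes that every Laurent coefficient of $t\mapsto P(\widetilde{th},\widetilde{th}^{-1})$ vanishes identically in $h$ for every $N$. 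A proof that ends by naming its own unresolved obstruction is a plan, not a proof, so as it stands the lemma is not established.

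For comparison, the paper's argument is three lines: pick a defining standard polynomial $P$ that is non-constant on pairs $(g,g^{-1})$ (one exists by strictness, since $P(\Id,\Id)=0$ while $P(g_0,g_0^{-1})\neq 0$ for some $g_0$), write it as an absolutely convergent series in the basis $(e_n)$, and assert that for $n$ large the restriction of $P$ to pairs $(g,g^{-1})$ with $g\in\GL(E_n)$ is still non-constant, hence cuts out a strict algebraic subset of $\GL(E_n)$ containing $H_{E_n}$. The engine behind ``for $n$ large'' is the pointwise convergence of truncations, $P(\pi_nM\pi_n,\pi_nN\pi_n)\to P(M,N)$, which follows from absolute convergence and is the same convergence you invoke for $R_\beta^{(N)}(g_0)\to R_\beta(g_0)$. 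The wrinkle you are wrestling with --- that $(\pi_ng_0\pi_n,\pi_ng_0^{-1}\pi_n)$ is not of the form $(h,h^{-1})$ for $h\in\GL(E_n)$, so non-vanishing of the truncated polynomial at that point does not immediately exhibit an element of $\GL(E_n)\setminus H_{E_n}$ --- is a genuine one that the paper leaves implicit; but your proposal neither resolves it nor reduces it to something the absolute-convergence framework supplies. If you want to complete your route, the statement you must prove is the contrapositive: a standard polynomial $P$ with $P(\widetilde h,\widetilde h^{-1})=0$ for all $n$ and all $h\in\GL(E_n)$ satisfies $P(g,g^{-1})=0$ for every $g\in\GL(\calH)$; the $t$-grading alone does not deliver this.
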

\begin{proof}Let $\mathcal{P}$ be the family of standard polynomials defining the algebraic subgroup $H$. Let $P\in\mathcal{P}$ be a non-constant standard  polynomial. Choose an orthonormal basis $(e_n)$ such that $P$ can be written as an absolutely convergent series. For $n\in\N$, let us set $E_n$ to be the space spanned by $(e_1,\dots,e_n)$. 

In particular for $n$ large enough, the restriction of $P$ to pairs $(g,g^{-1})$ is a non-constant polynomial map on $\GL(E_n)$ and thus defines a strict algebraic subset of $\GL(E_n)$. \end{proof}

\begin{remark} Not all  polynomial maps are standard. The set of compact operators $\LL_c(\calH)$ is closed in $\LL(\calH)$ (it is the closure of the set of finite rank operators) and by Hahn-Banach theorem there is a non-trivial bounded linear form that vanishes on $\LL_c(\calH)$. This linear form is not standard because it vanishes on all finite rank operators.
\end{remark}

\begin{remark}By Proposition~\ref{ep} (proved in \S~\ref{ext}) Zariski-density (Definition~\ref{Zdense}) implies geometric density and we don't know if the converse holds. Lemma~\ref{lem:nt} shows that algebraic subgroups can be tracked by considering finite dimensional subspaces. So, one can think that phenomena similar to those in the finite dimensional case happen and this is maybe a clue that the converse implication between geometric density and Zariski-density holds. In particular, one can show that if $H$ is a strict algebraic subgroup of $\OOO_\K(p,\infty)$ such that there is some finite dimensional subspace $E$ with $H_E\neq\{\Id\}$ then $H$ is not geometrically dense.
\end{remark}

%%%%%%%%%%%%%%%
\subsection{Exterior products}\label{ext}
%%%%%%%%%%%%%%%%%%%
Let $\calH_\K$ be a Hilbert space over $\K$ with Hermitian form $Q$ of signature $(p,\infty)$. We denote by $\calH$ the underlying real Hilbert space and by $(\cdot ,\cdot )$ the real quadratic form $\Re(Q)$. 

The exterior product $\Exterior^k\mathcal{H}$ has a natural structure of pre-Hilbert space and there is a continuous representation $\pi_k\colon\GL(\mathcal{H})\to\GL\left(\Exterior^k\mathcal{H}\right)$ given by the formula $\pi_k(g)(x_1\wedge\dots\wedge x_k)=gx_1\wedge\dots\wedge gx_k$. An orthonormal basis of $\Exterior^k\mathcal{H}$ is given by $(e_{i_1}\wedge\dots\wedge e_{i_k})_{i\in\mathcal{I}}$ where $\mathcal{I}$ is the set of elements $i\in\N^k$ such that $i_1<\dots<i_k$ and $(e_i)$ is an orthonormal basis of $\mathcal{H}$. In other words, if $\langle \cdot,\cdot\rangle$ is the scalar product on $\mathcal H$, then the bilinear form applied to two vectors $x_1\wedge\dots\wedge x_k$ and $y_1\wedge\dots\wedge y_k$ is given by the Gram determinant $\det \left( \langle x_i,y_j\rangle_{i,j=1..k}\right)$. As usual, the completion of $\Exterior^k\mathcal{H}$ is denoted $\overline{\Exterior^k\mathcal{H}}$.

 The space $\Exterior^k\mathcal{H}$ is also endowed with a quadratic form built from $Q$. One defines $(x_1\wedge\dots\wedge x_k,y_1\wedge\dots\wedge y_k)$ to be $\det\left((x_i,y_j)\right)$. As soon as $k\geq2$, this quadratic form is non-degenerate of signature $(\infty,\infty)$ and extends continuously to $\overline{\Exterior^k\mathcal{H}}$. Moreover $\pi_k(\OO(Q))$ preserves this quadratic form.
 
\begin{lemma}\label{lem:basis} Let $(e_i)_{i\in\mathcal I}$ be an orthonormal basis of $\mathcal{H}$ and let $v,w$ be vectors of $\Exterior^k \mathcal{H}$. There are   families $(\lambda_i)$ and $(\mu_i)$ such that for any $g\in\GL(\mathcal{H})$, 
$$(\pi_k(g)v,w)=\sum_{i,j\in\mathcal{I}}\ov{\lambda_i}\mu_j\frac{1}{k!}\sum_{\sigma\in\mathcal{S}_k}\prod_{l=1}^k(ge_{i_l},e_{j_{\sigma(l)}})$$
is a standard polynomial in $g$.
\end{lemma}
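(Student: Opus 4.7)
The natural plan is to expand $v$ and $w$ in the orthonormal basis $(e_{i_1}\wedge\dots\wedge e_{i_k})_{i\in\mathcal{I}}$ of the Hilbert completion $\overline{\Exterior^k\mathcal{H}}$, writing
\[
v=\sum_{i\in\mathcal{I}}\lambda_i\,e_{i_1}\wedge\dots\wedge e_{i_k},\qquad w=\sum_{j\in\mathcal{I}}\mu_j\,e_{j_1}\wedge\dots\wedge e_{j_k},
\]
with $(\lambda_i),(\mu_j)\in\ell^2(\mathcal I)$. By bilinearity and the continuity of $\pi_k(g)$ and of the $Q$-bilinear form $(\cdot,\cdot)$ on $\overline{\Exterior^k\mathcal{H}}$, this gives
\[
(\pi_k(g)v,w)=\sum_{i,j\in\mathcal I}\overline{\lambda_i}\,\mu_j\,\bigl(ge_{i_1}\wedge\dots\wedge ge_{i_k},\,e_{j_1}\wedge\dots\wedge e_{j_k}\bigr).
\]

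I would then evaluate each elementary pairing by the Gram determinant formula $\det\bigl((ge_{i_a},e_{j_b})\bigr)_{a,b=1,\dots,k}$ and expand it via the Leibniz formula into $\sum_{\sigma\in\mathcal S_k}\mathrm{sgn}(\sigma)\prod_l(ge_{i_l},e_{j_{\sigma(l)}})$. Since the basis $(e_i)$ is $Q$-orthogonal with diagonal signs $\varepsilon_i=\pm 1$, each factor $(ge_{i_l},e_{j_{\sigma(l)}})$ reduces to $\varepsilon_{j_{\sigma(l)}}\cdot\Re\langle ge_{i_l},e_{j_{\sigma(l)}}\rangle$, i.e.\ a matrix coefficient of $g$ in the Hilbert basis (viewing $\mathcal H$ as a real Hilbert space when $\K=\C$ or $\H$). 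The expression for $(\pi_k(g)v,w)$ then takes the symmetrized shape indicated in the lemma — the sign $\mathrm{sgn}(\sigma)$ and the signature signs being carried along in the bookkeeping of the coefficients — and is a series of products of matrix coefficients of $g$, matching Definition~\ref{def:stdalggroup}.

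The main obstacle will be to establish the absolute convergence required by the definition of a standard polynomial. A crude bound gives only $|\overline{\lambda_i}\mu_j\det(\cdot)|\leq|\lambda_i\mu_j|\,\|g\|^k$, which does not suffice since $(\lambda_i),(\mu_j)$ are typically in $\ell^2$ but not $\ell^1$. I would conclude as in Example~\ref{ex:algsub}(1): let $P_n$ be the orthogonal projection of $\overline{\Exterior^k\mathcal H}$ onto the finite-dimensional subspace spanned by the elementary wedges of the first $n$ basis vectors, and let $\mathcal I_n\subset\mathcal I$ be the set of tuples with entries bounded by $n$. Using the Hadamard bound $\|\pi_k(g)\|_{\mathrm{op}}\leq\|g\|^k$ on the Hilbert completion together with $|(u,v)|\leq\|u\|\,\|v\|$ for the $Q$-form, one obtains
\[
\Bigl|(\pi_k(g)v,w)-\sum_{i,j\in\mathcal I_n}\overline{\lambda_i}\mu_j\det\bigl((ge_{i_a},e_{j_b})\bigr)\Bigr|\leq\|g\|^k\bigl(\|w\|\,\|v-P_nv\|+\|v\|\,\|w-P_nw\|\bigr),
\]
and the right-hand side tends to $0$ as $n\to\infty$ because $P_nv\to v$ and $P_nw\to w$. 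This gives the convergence of the partial sums to $(\pi_k(g)v,w)$, which is the sense of ``absolutely convergent series'' used in the paper, and concludes the proof.
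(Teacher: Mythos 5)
Your proof follows the same route as the paper's: expand $v$ and $w$ in the orthonormal basis of elementary wedges of $\overline{\Exterior^k\mathcal{H}}$, evaluate each elementary pairing as a Gram determinant expanded by the Leibniz formula, identify each factor $(ge_{i_l},e_{j_{\sigma(l)}})=\langle ge_{i_l},I_{p,\infty}e_{j_{\sigma(l)}}\rangle$ as a matrix coefficient in the Hilbert basis, and deduce convergence from the fact that $(\lambda_i)$ and $(\mu_j)$ are Hilbert (hence $\ell^2$) coordinates. You are in fact somewhat more explicit than the paper about the convergence estimate, which the paper dispatches by the same truncation argument as in Example~\ref{ex:algsub}(1); the approach and all the key steps coincide.
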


\begin{proof} If suffices to write $v=\sum_{i\in\mathcal{I}}\bigwedge_{l=1}^k e_{i_l}\lambda_i$ and $w=\sum_{j\in\mathcal{I}}\bigwedge_{l=1}^k e_{j_l}\mu_j$ and express the scalar product of $\Exterior^k\mathcal{H}$ in the basis $(e_{i_1}\wedge\dots\wedge e_{i_k})_{i\in\mathcal{I}}$. The sum is absolutely convergent because $(\lambda_i)$ and $(\mu_j)$ are Hilbert coordinates. Finally, $(ge_{i_l},e_{j_{\sigma(l)}})=\langle ge_{i_l}, I_{p,\infty}e_{j_{\sigma(l)}}\rangle$ and writing $I_{p,\infty}e_{j_{\sigma(l)}}$ in the Hilbert base, one recovers an absolutely convergent series of matrix coefficients in the Hilbert base $(e_i)$.
\end{proof}

 \begin{lemma}\label{machin} Let $V$ be a non-trivial subspace of $\Exterior^k\mathcal{H}$. The stabilizer of $\overline{V}$ in $\OO(Q)$ is a standard algebraic subgroup.
 \end{lemma}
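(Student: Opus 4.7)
The plan is to realize the stabilizer as the common zero locus of a family of standard polynomials of uniformly bounded degree. Write $H$ for the stabilizer of $\overline V$ in $\OO(Q)$. An element $g \in \OO(Q)$ lies in $H$ precisely when $\pi_k(g)(\overline V) = \overline V$; since $g^{-1} \in \OO(Q)$, this equality is equivalent to the two inclusions $\pi_k(g)(\overline V) \subset \overline V$ and $\pi_k(g^{-1})(\overline V) \subset \overline V$. Taking the Hilbert orthogonal complement $\overline V^\perp$ in $\overline{\Exterior^k \calH}$ (which provides an honest topological supplement, unlike the $Q$-orthogonal complement, whose signature is $(\infty,\infty)$), each of these inclusions translates into a countable family of scalar conditions: $\langle \pi_k(g) v, w\rangle = 0$ for $v$ in a countable dense subset of $V$ and $w$ in a countable dense subset of $\overline V^\perp$, together with the analogous conditions obtained by replacing $g$ by $g^{-1}$.

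The next step is to check that each such scalar condition is a standard polynomial in $(g, g^{-1})$ of degree at most $k$. This is essentially the content of Lemma \ref{lem:basis}, which expresses $g \mapsto (\pi_k(g) v, w)$ in the $Q$-bilinear form as an absolutely convergent series of products of $k$ matrix coefficients of $g$ in the fixed basis $(e_i)$. The same computation applied to the Hilbert scalar product (or, alternatively, the identity $\langle x, y\rangle = (x, I_{p,\infty} y)$ followed by expansion of $I_{p,\infty} y$ in the basis) shows that $g \mapsto \langle \pi_k(g) v, w \rangle$ is a standard homogeneous polynomial of degree $k$ in $g$. Treating $g^{-1}$ as a separate variable, $g \mapsto \langle \pi_k(g^{-1}) v, w\rangle$ is then a standard homogeneous polynomial of degree $k$ in the second argument in the sense of Definition \ref{def:stdalggroup}.

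Combining these families with the standard polynomial equations of degree at most $2$ that cut out $\OO(Q)$ in $\GL(\calH)$ (Examples \ref{ex:algsub}(2)) yields a family of standard polynomials on $\LL(\calH) \times \LL(\calH)$ of uniformly bounded degree $\max(2,k)$ whose common zero locus in $\GL(\calH)$ is exactly $H$. This establishes that $H$ is a standard algebraic subgroup of $\GL(\calH)$, as claimed.

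The main bookkeeping step will be the verification of absolute convergence of the series expressing $\langle \pi_k(g) v, w\rangle$ once $v$ and $w$ are expanded in the orthonormal basis $(e_{i_1} \wedge \dots \wedge e_{i_k})$ of $\overline{\Exterior^k \calH}$; but this essentially reproduces the final lines of the proof of Lemma \ref{lem:basis}, so it is not a serious obstacle, only a careful reuse of that argument with the Hilbert scalar product in place of the $Q$-form.
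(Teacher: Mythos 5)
Your proposal is correct and follows essentially the same route as the paper: both reduce membership in the stabilizer to the vanishing of a family of degree-$k$ pairings of $\pi_k(g)$ applied to vectors spanning $\overline V$ against a complementary family, and both invoke Lemma \ref{lem:basis} (or its immediate variant) to see that these pairings are standard polynomials of uniformly bounded degree. Your two refinements --- using the Hilbert orthogonal complement and the scalar product $\langle\cdot,\cdot\rangle$ rather than the $Q$-induced form $(\cdot,\cdot)$, and imposing the conditions on both $g$ and $g^{-1}$ so as to capture set-wise equality rather than a mere inclusion --- are if anything slightly more careful than the paper's two-line argument, which states the conditions only for $g$ and in terms of the $Q$-pairing.
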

 
 \begin{proof} If $V$ is a non-trivial subspace of $\Exterior^k\mathcal{H}$, one can choose an orthonormal basis $(v_i)_{i\in I}$ of $\Exterior^k\mathcal{H}$ such that the closure $\overline{V}$ in the Hilbert completion $\overline{\Exterior^k\mathcal{H}}$ is the closed span of $(v_i)_{i\in I_0}$ for some $I_0\subset I$.

 Let $H$ be the subgroup of $\OO(Q)$ stabilizing $\overline{V}$. Thus, by Lemma \ref{lem:basis}, $g$ belongs to $H$ if and only if, for all $ i\in I_0$ and  $j\in I\setminus I_0$, we have $(\pi_k(g)v_i,v_j)=0$.  Thus $H$ is the algebraic subgroup of $\OO(Q)$ defined by the family of polynomials $\mathcal{P}=\{P_{ij}\}$ where $P_{ij}(g)=(\pi_k(g)v_i,v_j)$. 
 \end{proof}

\begin{proof}[Proof of Proposition~\ref{ep}]
We have seen in Example \ref{ex:algsub}  that stabilizers of points at infinity are standard algebraic subgroups. Assume $\calY$ is a strict totally geodesic subspace of $\calX$. Without loss of generality, we assume that $o\in\calY$ and thus $\calY$ corresponds to some Lie triple system $\mathfrak{p}< \SSS^2(\calH)$. Let $\mathfrak{k}=\overline{[\mathfrak{p},\mathfrak{p}]}$ and $\mathfrak m$ be the Lie algebra $\mathfrak{k}\oplus\mathfrak{p}\leq \LL^2(\calH)$. Since $\mathfrak m$ is a Lie algebra, $G=\exp(\mathfrak m)$ is a subgroup of $\GL^2(\calH)$ that is generated by transvections along geodesics in $\calY$. In particular, for any $h\in\OO(p,\infty)$, $h$ normalizes $G$ if and only if $h$ preserves $\calY$. Since $G=\exp(\mathfrak m)$, $h$ normalizes $G$ if and only if it stabilizes $\mathfrak m$ under the adjoint action (i.e. $\Ad(h)(\mathfrak m)=\mathfrak m$, which means that for any $X\in \mathfrak m$, $hXh^{-1}\in \mathfrak m$). Since $\mathfrak m$ is a closed subspace of $\LL^2(\calH)$, we have the splitting $\LL^2(\calH)=\mathfrak m\oplus \mathfrak m^\bot$. So, $h$ stabilizes $\mathfrak m$ if and only if for any $X\in \mathfrak m$ and $Y\in \mathfrak m^\bot$, $\langle hXh^{-1},Y\rangle=0$ where $\langle\ ,\ \rangle$ is the Hilbert-Schmidt scalar product. Finally since the map $(M,N)\mapsto  \langle MXN,Y\rangle$ is bilinear on $\LL(\calH)\times\LL(\calH)$, $H$ is an algebraic subgroup of $\OO(p,\infty)$. It remains to show that these bilinear maps are standard.  Let $(e_n)$ be an orthonormal basis of $\calH$ and let $E_{i,j}=e_i\otimes e_j^*$ be the associated orthonormal basis of $\LL^2(\calH)$, that is $E_{i,j}(x)=\langle x,e_j\rangle e_i$. Thus, let us write $X=\sum_{i,j}X_{i,j}E_{i,j}$ and $Y=\sum_{i,j}X_{i,j}E_{i,j}$ to obtain

$$\langle MXN,Y\rangle=\sum_{i,j,k,l}X_{i,j}Y_{k,l}\langle ME_{i,j}N,E_{k,l}\rangle$$
where 
\begin{align*}
\langle ME_{i,j}N,E_{k,l}\rangle&=\trace\left((ME_{i,j}N)^*E_{k,l}\right)\\
&=\sum_{n\in\N}\langle N^*E_{i,j}M^*E_{k,l}(e_n),e_n\rangle\\
&=\langle N^*E_{j,i}M^*(e_k),e_l\rangle\\
&=\langle E_{j,i}M^*(e_k),N(e_l)\rangle\\
&=\langle e_i,M^*(e_k)\rangle\langle e_j,N(e_l)\rangle\\
&=\langle M(e_i),e_k\rangle\langle e_j,N(e_l)\rangle.
\end{align*}
The absolute convergence of the series can be proven with the same arguments as in Example \ref{ex:algsub}.(1).
\end{proof}

%An immediate corollary of Theorem \ref{ep} is the following:
%\begin{cor}\label{cor:ep} If $\Gamma\leq\OO(p,\infty)$ is a countable subgroup which does not fix a point in $\partial \calX_\K(p,\infty)$ nor stabilize a closed totally geodesic subspace of $\calX_\K(p,\infty)$ then $\Gamma$ is not contained in a proper algebraic subgroup.
% \end{cor}
%We say that a subgroup that satisfies these two properties acts \emph{geometrically densely}.\\

Let $\calH$ be a Hilbert space over $\K$ with a non-degenerate Hermitian form $Q$ of signature $(p,\infty)$ with $p\in\N$. For a finite dimensional non-degenerate subspace $E\subset\calH$ of Witt index $p$, we denote by $\calX_E\subset\calX_\K(p,\infty)$ the subset of  isotropic subspaces of $E$ of dimension $p$. This corresponds to a standard embedding of $\calX_\K(p,q)\hookrightarrow\calX_\K(p,\infty)$ where $(p,q)$ is the signature of the restriction of $Q$ to $E$. Let $\mathcal{E}$ the collection of all such finite dimensional subspaces. We conclude this section with a lemma that shows that the family $(\calX_E)_{E\in\mathcal{E}}$ is cofinal among finite dimensional totally geodesic subspaces. 
\begin{lem}\label{lem:fdtgs}For any finite dimensional totally geodesic subspace $\mathcal{Y}\subset \calX_\K(p,\infty)$, there is $E\in\mathcal{E}$ such that $\mathcal{Y}\subset\calX_E$. %Moreover, if $G$ is a subgroup of $\PO_\K(p,\infty)$ stabilizing $\mathcal{Y}$ then there is a unique minimal subspace $E\in\mathcal{E}$ such that $\mathcal{Y}\subset \calX_E$ is $G$-invariant and thus $G$ preserves $\calX_E$.
\end{lem}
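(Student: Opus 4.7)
The plan is to reduce to the case where the origin $o = V_0$ lies in $\mathcal{Y}$, then exploit the fact that every tangent vector in $\mathfrak{p}_{\calX_\K(p,\infty)}$ has rank bounded by $2p$, and finally assemble a finite-dimensional $Q$-non-degenerate subspace $E$ containing every subspace $V \in \mathcal{Y}$. Since $\OOO^2_\K(p,\infty)$ acts transitively on $\calX_\K(p,\infty)$ by $\K$-linear $Q$-isometries, we can pick $g \in \OOO^2_\K(p,\infty)$ sending some point of $\mathcal{Y}$ to $o$; if we find $E$ with $g(\mathcal{Y}) \subset \calX_E$, then $g^{-1}(E) \in \mathcal{E}$ works for $\mathcal{Y}$. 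So assume $o \in \mathcal{Y}$.

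Since $\mathcal{Y}$ is a finite-dimensional totally geodesic subspace through $o$, it corresponds to a finite-dimensional Lie triple subsystem $\mathfrak{p}_{\mathcal{Y}} \subset \mathfrak{p}_{\calX_\K(p,\infty)}$, and $\mathcal{Y} = \{\exp(X) \cdot V_0 : X \in \mathfrak{p}_{\mathcal{Y}}\}$, where $\exp$ denotes the Banach-algebra exponential acting on $\calH$. By the description of $\mathfrak{p}_{\calX_\K(p,\infty)}$ in Section~\ref{sec:Hermitian}, every $X \in \mathfrak{p}_{\mathcal{Y}}$ has block form
$$X = \begin{pmatrix} 0 & A \\ A^* & 0 \end{pmatrix}$$
with respect to the orthogonal decomposition $\calH = V_0 \oplus V_0^\perp$, where $A \colon V_0^\perp \to V_0$ is Hilbert--Schmidt. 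Since $\dim_\K V_0 = p < \infty$, the $\K$-rank of $A$ is at most $p$, so each such $X$ has $\K$-rank at most $2p$; this is the key point where $p < \infty$ enters the argument.

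Pick a $\K$-basis $X_1, \dots, X_n$ of $\mathfrak{p}_{\mathcal{Y}}$, with $X_i$ given by $A_i$, and set $W = \sum_{i=1}^n A_i^*(V_0) \subset V_0^\perp$ and $F = V_0 \oplus W$; these are $\K$-subspaces of $\K$-dimension at most $p(n+1)$. A direct computation gives $X_i(V_0) = A_i^*(V_0) \subset W$ and $X_i(W) \subset A_i(V_0^\perp) \subset V_0$, so each $X_i$ (hence every $X \in \mathfrak{p}_{\mathcal{Y}}$) preserves $F$. Consequently $\exp(X)$ preserves $F$ for every $X \in \mathfrak{p}_{\mathcal{Y}}$, so every $V = \exp(X)(V_0) \in \mathcal{Y}$ satisfies $V \subset F$.

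Finally, $Q|_{V_0}$ is positive definite and, since $V_0^\perp$ is negative definite, $Q|_W$ is negative definite; thus $F$ is $Q$-non-degenerate of signature $(p, \dim_\K W)$. If $\dim_\K W \geq p$ we set $E = F$; otherwise we enlarge $W$ to a finite-dimensional $\K$-subspace $W' \subset V_0^\perp$ with $\dim_\K W' \geq p$ and set $E = V_0 \oplus W'$. Then $E \in \mathcal{E}$ has Witt index $p$, and $\mathcal{Y} \subset \calX_E$ as required. The only real obstacle is Step~3, namely exhibiting the finite rank bound on elements of $\mathfrak{p}_{\mathcal{Y}}$ coming from the block structure; once this is in place, stability of $F$ and the signature computation are routine.
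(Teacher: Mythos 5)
Your proof is correct, but it takes a genuinely different route from the paper's. The paper argues synthetically: by an induction on dimension it extracts finitely many points $x_1,\dots,x_n\in\mathcal{Y}$ such that $\mathcal{Y}$ is the \emph{smallest} totally geodesic subspace containing them, takes $E$ to be the linear span of these positive definite $p$-dimensional subspaces of $\calH$ (enlarged to be non-degenerate of Witt index $p$), and concludes because $\calX_E$ is a totally geodesic subspace containing all the $x_i$. You instead work infinitesimally at a base point: you use the Lie triple system $\mathfrak{p}_{\mathcal{Y}}$ and the off-diagonal block form of $\mathfrak{p}$ to produce an explicit finite-dimensional $\exp(\mathfrak{p}_{\mathcal{Y}})$-invariant subspace $F=V_0\oplus\sum_i A_i^*(V_0)$. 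Your argument buys an explicit dimension bound ($\dim_\K E\leq (n+1)p$ in terms of $n=\dim\mathfrak{p}_{\mathcal{Y}}$) at the cost of invoking the block description of the Lie triple system, which the paper only writes out in particular cases; the paper's argument avoids any coordinates but needs the (standard, though unstated) fact that an arbitrary intersection of closed totally geodesic subspaces is totally geodesic, so that a smallest one exists. One cosmetic slip: $\mathfrak{p}_{\mathcal{Y}}$ is only an $\R$-linear subspace of $\mathfrak{p}$ in general (e.g.\ for totally real subspaces when $\K=\C$ or $\H$), so you should take an $\R$-basis $X_1,\dots,X_n$ rather than a $\K$-basis; the rest of your argument is unaffected since $F$ is still a finite-dimensional $\K$-subspace invariant under every $\R$-linear combination of the $X_i$.
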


\begin{proof}We claim that one can find finitely many points $x_1,\dots,x_n\in\mathcal{Y}$ such that $\mathcal{Y}$ is the smallest totally geodesic subspace of $\calX$ that contains $\{x_1,\dots,x_n\}$. We define by induction points $x_1,\dots,x_k\in\mathcal{Y}$ and $\mathcal{Y}_k$ that is the smallest totally geodesic subspace containing $\{x_1,\dots,x_k\}$. Observe that $\mathcal{Y}_k$ has finite dimension since $\{x_1,\dots,x_k\}\subset\mathcal{Y}$ and $\mathcal{Y}$ has finite dimension. For $x_1$ choose any point in $\mathcal{Y}$ and let $\mathcal{Y}_1$ be $\{x_1\}$. Assume $x_1,\dots,x_k$ have been defined. If $\mathcal{Y}_k\neq\mathcal{Y}$, choose $x_{k+1}\in\mathcal{Y}\setminus\mathcal{Y}_k$. One has $\mathcal{Y}_{k+1}\varsupsetneq\mathcal{Y}_{k}$ and thus $\dim(\mathcal{Y}_{k+1})>\dim(\mathcal{Y}_{k})$. So, in finitely many steps, one gets that there is $n\in\N$ such that $\mathcal{Y}_n=\mathcal{Y}$.

The points $x_1,\dots,x_n$ are positive definite subspaces (with respect to $Q$) of $\mathcal{H}$. So,  let $E$ be the span of these subspaces. Observe that this space has finite dimension and $x_1,\dots,x_n\in\calX_E$. Up to adding finitely many vectors to $E$, we may moreover ensure that $E$ is non-degenerate with Witt index $p$.
\end{proof}
\section{Boundary theory}\label{sec:bndth}
%%%%%%%%%%%%%%%%%%%%%%%

%%%%%%%%%%%%%%%%%%%%%
\subsection{Maps from strong boundaries}
%%%%%%%%%%%%%%%%%%%%%%

Let $G$ be a locally compact, second countable group acting continuously by isometries on $\calX_\K(p,\infty)$ where $p$ is finite. This section is dedicated to the analysis of \emph{Furstenberg maps} also known as \emph{boundary maps} from a measurable boundary of $G$ to the geometric boundary  $\partial\calX_\K(p,\infty)$, or more precisely to some specific part of this boundary. We use a suitable notion of measurable boundary of a group introduced in \cite[\S2]{BFICM}. This definition (see Definition \ref{def:strong_boundary}) is a strengthening of previous versions introduced by Furstenberg \cite{MR0352328} and Burger-Monod \cite{BM2}.

 Let us recall that a Polish space is a topological space which is separable and completely metrizable. By a \emph{Lebesgue $G$-space} we mean a standard Borel space (that is a space and a $\sigma$-algebra given by some Polish space and its Borel $\sigma$-algebra), equipped with a Borel probability measure and an action of $G$ which is measurable and preserves the class of the measure. We denote by $\Prob(\Omega)$ the space of probability measures on a standard Borel space $\Omega$. This is a Polish space for the topology of weak convergence.

\begin{definition}
Let $\Omega$ be a standard Borel space, $\lambda\in\Prob(\Omega)$. Assume that $G$ acts on $\Omega$ with  a measure-class preserving action. The action of $G$ on $\Omega$ is \emph{isometrically ergodic} if for every separable metric space $Z$ equipped with an isometric action of $G$, every $G$-equivariant measurable map $\Omega\to Z$ is essentially constant.
\end{definition}

\begin{remark}
If the action of $G$ on $\Omega$ is isometrically ergodic, then it is ergodic (take $Z=\{0,1\}$ and the trivial action). Furthermore, if the action of $G$ on $\Omega\times \Omega$ is isometrically ergodic, then it is also the case for the action on $\Omega$. 
\end{remark}

\begin{definition}
Let $Y$ and $Z$ be two Borel $G$-spaces and $p:Y\to Z$ be a Borel $G$-equivariant map. We denote by $Y\times_p Y$ the \emph{fiber product over} $p$, that is the subset $\{(x,y)\in Y^2,\ p(x)=p(y)\}$ with its Borel structure coming form $Y^2$.

We say that $p$ (or $Y$) admits a \emph{fiberwise isometric action} if there exists a Borel, $G$-invariant map $d:Y\times_p Y\to \R$ such that any fiber $Y’\subset Y$ of $p$ endowed with  $d|_{Y’\times Y’}$  is a separable metric space. \end{definition}

Before going on, let us give a few examples of fiberwise isometric actions. These examples are closed to measurable fields of metric spaces that appear in \cite{BDL} and are simpler versions of fiberwise isometric actions that will appear in the proof of Theorem \ref{Shilov}. 

\begin{example}\label{ex:spheres}
 Let $(M,d)$ be a metric space. The \emph{Wisjman hyperspace} $2^M$ is the set of closed subspaces in  $M$. This space can embedded in the space $C(M)$ of continuous functions on $M$: to any close subspace $A$, one associates the distance function $x\mapsto d(A,x)$. The topology of pointwise convergence on $C(M)$ induces the so-called \emph{Wisjman topology} on $2^M$. If $(M,d)$ is complete and separable then the Wisjman hyperspace is a Polish space. Actually, when $M$ is separable, the topology is the same as the topology of pointwise convergence on a countable dense subset.

Let $\calX$ be a complete separable CAT(0) space. We denote by $\mathcal{F}_k$ the space of flat subspaces of dimension $k$ in $\calX$ (i.e. isometric copies of $\R^k$). One can check that $\mathcal{F}_k$ is closed in $2^{\cal X}$:  flatness is encoded in three conditions (equality in the CAT(0) inequality, convexity and geodesic completeness), the dimension is encoded in the Jung inequality (see e.g. \cite{MR1456512}) and all these conditions are closed. The visual boundary $\partial \calX$ with the cone topology is a closed subspace of $\overline{\calX}$ which is an inverse limit of a countable family of closed balls \cite{BH}. Thus $\partial \calX$ is a Polish space.

Let $G$ act by isometries on $\calX$, and let $k>0$ be such that there exists a $k$-dimensional flat in $\calX$. Let  $\partial\mathcal F_k=\{(F,\xi)\mid F\in\mathcal F_k\textrm{ and } \xi\in\partial F\}$. This is a  closed subspace of $\mathcal{F}_k\times \partial \calX$. Then the continuous projection $\partial \mathcal F_k\to\mathcal F_k$ admits a fiberwise isometric action of $\Isom(\cal X)$, each $\partial F$ being endowed with the Tits metric. 

\end{example}

%\begin{examples}
%
%The two following examples are similar to Example \ref{ex:spheres}, although slightly more technical. While they are not used directly in the proof of Theorem \ref{Shilov}, the reader should keep them in mind. Since we will not rely directly on them, we take the liberty not to  detail the Borel structures. 
%
%
%\begin{enumerate}[(i)]
%\item Let $0<k'<k$. Let $\mathcal S_{k',k}$ be the set of pairs $(F,S)$ where $F\in\mathcal F_k$ and $S$ is a $k'$-dimensional sphere contained in $\partial F$. Then the first projection $\mathcal S_{k',k}\to \mathcal F_k$ admits a fiberwise isometric action of $\Isom(\cal X)$. Once again, the distance on each sphere $\partial F$ is the restriction of the Tits metric on this sphere and the distance between subspheres of dimension $k’$ in $\partial F$ is given by the Hausdorff distance.
%\item Let us recall that two flat subspaces $F,F’\in\mathcal{F}_k$ are parallel if the distance function to $F’$ is constant on $F$ and vice-versa. In that case, the convex hull of $F$ and $F’$ splits isometrically as $F\times [0,d]$ where $d$ is the distance between $F$ and $F’$. Now let $\mathcal F_k^\parallel=\{(F,F')\in\mathcal{F}_k^2\mid  F,F'\textrm{ are parallel }\}$. The first factor projection $\mathcal F^{\parallel}_k\to\mathcal F_k$ admits a fiberwise isometric action of $\Isom(\cal X)$. \end{enumerate}
%
%\end{examples}

\begin{definition}\label{def:relative_ergodic} 
Let $A$ and $B$ be Lebesgue $G$-spaces. Let $\pi:A\to B$ be a measurable $G$-equivariant map. We say that $\pi$ is \emph{relatively isometrically ergodic} if each time we have a $G$-equivariant Borel map $p:Y\to Z$ of standard Borel $G$-spaces which admits a fiberwise isometric action, and measurable $G$-maps $A\to Y$ and $B\to Z$ such that the following diagram commutes
$$
\xymatrix{
    A  \ar[d]_\pi\ar[r]  & Y \ar[d]^p \\
    B \ar[r] & Z
  }
$$
then there exists a measurable $G$-map $\phi\colon B\to Y$ which makes the following diagram commutative.
$$
\xymatrix{
    A  \ar[d]_\pi\ar[r]  & Y \ar[d]^p \\
    B \ar[r]\ar@{.>}[ur]^\phi & Z
  }
$$
\end{definition}

\begin{remark}
If a $G$-Lebesgue space $B$ is such that the first projection $\pi_1\colon B\times B\to B$ is relatively isometrically ergodic, then it is isometrically ergodic. Indeed if $Y$ is a separable metric $G$-space and $f:B\to Y$ is $G$-equivariant then it suffices to apply relatively isometric ergodicity to the map $\tilde{f}\colon(b,b’)\mapsto f(b')$ and the trivial fibration $Y\to\{\ast\}$.
$$
\xymatrix{
    B\times B  \ar[d]_{\pi_1}\ar[r]^{\tilde{f}}  & Y \ar[d] \\
    B \ar[r]\ar@{.>}[ur]^\phi & \{\ast\}
  }
$$
Actually, relative isometric ergodicity yields a measurable map $\phi\colon B\to Y$ such that for almost all $(b,b’)$, $\phi(b)=f(b’)$ and thus $f$ is essentially constant.
\end{remark}

Let $B$ be a Lebesgue $G$-space. We use the definition of amenability for actions introduced by Zimmer, see \cite[\S4.3]{ZimmerBook}. The action $G\action B$ is \emph{amenable} if for any compact metrizable space $M$ on which $G$ acts continuously by homeomorphisms there is a measurable $G$-equivariant map $\phi\colon B\to\Prob(M)$. 

\begin{definition}\label{def:strong_boundary}
The Lebesgue $G$-space $B$ is a \emph{strong boundary} of $G$ if
\begin{itemize}
\item the action of $G$ on $(B,\nu)$ is amenable (in the sense of Zimmer) and
\item the first projection $\pi_1\colon B\times B\to B$ is relatively isometrically ergodic.
\end{itemize}
\end{definition}

\begin{example}
The most important example for us is the following \cite[Theorem 2.5]{BFICM}. Let $G$ be a connected semisimple Lie group, and $P$ a minimal parabolic subgroup. Then $G/P$, with the Lebesgue measure class, is a strong boundary for the action of $G$. If $\Gamma<G$ is a lattice, then $G/P$ is also a strong boundary for the action of $\Gamma$. More generally, this is also true if $G$ is a semisimple algebraic group over a local field.
\end{example}

The next example shows that every countable group admits a strong boundary. 

\begin{example}
Let $\Gamma$ be a countable group, and $\mu\in \Prob(\Gamma)$ be a symmetric measure whose support generates $\Gamma$. Let $(B,\nu)$ be the Poisson-Furstenberg boundary associated to $(\Gamma,\mu)$. Then $B$ is a strong boundary of $\Gamma$ \cite[Theorem 2.7]{BFICM}.
\end{example}

Existence of Furstenberg maps is already known. In the next section, we show that we can specify the type of points in the essential image and get that these points are essentially opposite. Let us recall that an isometric action of a group $\Gamma$ on a \cat space $\calX$ is \emph{non-elementary} if there is no invariant flat subspace (possibly reduced to a point) nor a global fixed point at infinity.

\begin{theorem}[{\cite[Theorem 1.7]{Duc13}}]\label{thm:duc}
Let $\G$ be locally compact second countable group, $B$ a  strong boundary for $G$ and $p\in\N$. For
any continuous and non-elementary action of $\G$ on $\calX_\K(p,\infty)$ there exists a measurable $\G$-map
$\phi : B \to \partial\calX_\K(p,\infty) $.
\end{theorem}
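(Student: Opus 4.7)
The plan is to combine amenability of the $\Gamma$-action on $B$ with the fact that $\calX := \calX_\K(p,\infty)$ has finite rank $p$. Finite rank (equivalently, finite telescopic dimension) is what replaces local compactness and allows canonical barycentric constructions to work.

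First, although $\partial \calX$ with the cone topology is not compact, the bordification $\overline{\calX}^w = \calX \cup \partial \calX$ becomes compact (and remains Polish) under a weaker topology whose existence is guaranteed by the finite rank hypothesis; see the discussion surrounding \cite[Remark 1.2]{CL}. The continuous isometric action of $\Gamma$ on $\calX$ extends to a continuous action on this compact bordification. Applying the amenability of the action $\Gamma \curvearrowright B$ to the compact $\Gamma$-space $\overline{\calX}^w$ produces a measurable $\Gamma$-equivariant map
\[
\Psi: B \longrightarrow \Prob(\overline{\calX}^w).
\]

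Next, I would build a canonical $\Gamma$-equivariant Borel map from a suitable $\Gamma$-invariant subset of $\Prob(\overline{\calX}^w)$ to $\partial \calX$ and compose with $\Psi$. The construction proceeds by a dichotomy on $\nu = \Psi(b)$: either $\nu$ gives positive mass to $\calX$, in which case one tries to extract a canonical point of $\calX$ (circumcenter of the support, conditional barycenter, or a minimizing sequence escaping to infinity in a canonical direction), or $\nu$ is concentrated on $\partial \calX$, in which case the finite rank of $\calX$ allows one to take a Tits-circumcenter of the support (or a canonical iterated reduction when the Tits-diameter is too large), landing in $\partial \calX$.

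The role of non-elementarity is to rule out the \emph{inside} alternative: if the above construction ever produced a measurable $\Gamma$-map $B \to \calX$, then by pushing it through amenability/ergodicity and standard convex-geometric arguments (barycenters of bounded $\Gamma$-invariant sets exist in CAT$(0)$ spaces of finite telescopic dimension), one would produce either a $\Gamma$-fixed point, a $\Gamma$-invariant proper closed convex subset with a canonical invariant at infinity, or a $\Gamma$-invariant flat, all contradicting non-elementarity. Hence $\Psi(b)$ must be concentrated on $\partial \calX$ almost surely, and the canonical reduction yields the desired boundary map $\phi: B \to \partial \calX$.

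The main obstacle is precisely the canonical point-extraction step. The Tits-circumcenter is only directly available for sets of Tits-diameter less than $\pi/2$, and a general measure on $\overline{\calX}^w$ need not satisfy this bound; one must iterate a canonical reduction procedure (restricting to the minimal Tits-closed $\Gamma$-canonical subset of the support, or invoking the splitting theorems available in finite telescopic dimension à la Caprace--Lytchak) to descend to a situation where a unique canonical point exists. Coordinating this iterative reduction with measurability in $b$ is the delicate point of the argument.
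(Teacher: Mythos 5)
The paper does not prove this statement: it is imported verbatim from \cite[Theorem 1.7]{Duc13}, so there is no in-text argument to compare against. Your plan reconstructs essentially the strategy of that reference — amenability of $\G\action B$ gives a map $B\to\Prob(\overline{\calX})$ into measures on the weak compactification available in finite rank, non-elementarity (via isometric ergodicity of $B$, since a map $B\to\calX$ into a separable metric space would be essentially constant and yield a fixed point, i.e.\ an invariant $0$-flat) rules out the interior alternative, and the canonical center-at-infinity extraction in finite telescopic dimension à la Adams--Ballmann and Caprace--Lytchak produces the boundary point — and the "delicate point" you flag (making the iterated reduction canonical and measurable in $b$) is exactly the technical content carried out in \cite{Duc13}.
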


We will also rely on results obtained in \cite{BDL}. Unfortunately, \cite{BDL} was written before the final version of \cite{BFICM} and a slightly different language was used there. Group actions on measurable metric fields were used there and here we just described fiberwise isometric actions. In the following proposition, we establish the relation between these two notions. We refer to \cite[\S3]{BDL} for definitions, notations and a discussion about measurable metric fields. Roughly speaking, a measurable metric field over a Lebesgue $\Gamma$-space $\Omega$ is a collection $\mathbf{X}=(X_\omega)_{\omega\in\Omega}$ of metric spaces $(X_\omega,d_\omega)$ where one moves from one such metric space to another one in a measurable fashion. It admits a $\Gamma$-action if for all $\omega\in\Omega, g\in\Gamma$, there is an isometry $\sigma(g,\omega)\colon X_\omega\to X_{g\omega}$  that satifies the cocycle relation $\sigma(gg’,\omega)=\sigma(g,g’\omega)\circ\sigma(g’,\omega)$ almost surely.

\begin{lemma}\label{lem:field_fiber} Let $\Gamma$ be a countable group and let $\mathbf{X}$ be a measurable metric field over a Lebesgue $\Gamma$-space $\Omega$ with a $\Gamma$-action. 
Then there is a $\Gamma$-invariant Borel subset $\Omega_0\subset\Omega$ of full measure,  a standard Borel structure on $X=\sqcup_{\omega\in\Omega_0}X_\omega$ and a Borel map $ p \colon X\to\Omega_0$ such that $ p $ admits a $\Gamma$-fiberwise isometric action. Moreover, the fiber $ p ^{-1}(\omega)$ is $X_\omega$ with the metric $d_\omega$.

If $x$ is an invariant section of $\mathbf{X}$ then $x$ corresponds canonically to a $\Gamma$-equivariant measurable map $\Omega_0\to X$.
\end{lemma}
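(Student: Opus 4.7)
The plan is to construct the total space $X$ by Kuratowski-embedding each fiber into a common Polish space using a countable fundamental family of measurable sections, and then to transport the $\Gamma$-action. By the definition of a measurable metric field (in the sense of \cite{BDL}, \S3), after discarding a null set we may assume the existence of a countable family of measurable sections $(s_n)_{n\in\N}$ of $\mathbf{X}$ such that, for every $\omega\in\Omega$, the set $\{s_n(\omega)\mid n\in\N\}$ is dense in $X_\omega$, and such that each map $\omega\mapsto d_\omega(s_n(\omega),s_m(\omega))$ is Borel. Since $\Gamma$ is countable and acts by isometries between fibers, enlarging the family $(s_n)$ by its $\Gamma$-translates (still countable) and intersecting with a $\Gamma$-invariant Borel subset of full measure, we may further assume that the family is $\Gamma$-invariant and that the sections are defined on a $\Gamma$-invariant Borel conull set $\Omega_0\subset\Omega$.

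Next, define the embedding $\Psi\colon \sqcup_{\omega\in\Omega_0} X_\omega\to \Omega_0\times\R^\N$ by
\[
\Psi(\omega,x)=\bigl(\omega,\,(d_\omega(x,s_n(\omega)))_{n\in\N}\bigr).
\]
Density of $(s_n(\omega))_n$ in $X_\omega$ implies that $\Psi$ is injective on each fiber. The image $X:=\Psi(\sqcup_\omega X_\omega)$ is the set of pairs $(\omega,(t_n))$ for which $(t_n)$ is the coordinate sequence of a point in the completion of $\{s_n(\omega)\}$, and this condition can be encoded by a countable conjunction of Borel conditions involving the sequences $(d_\omega(s_n(\omega),s_m(\omega)))$; hence $X$ is Borel in $\Omega_0\times \R^\N$ and thus standard Borel. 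Let $p\colon X\to\Omega_0$ be the first projection; its fibers are precisely $X_\omega$ with the original metric $d_\omega$. The fiber product $X\times_p X$ is then standard Borel, and the function $d(\omega,x,y):=d_\omega(x,y)$ is Borel on it, because it can be recovered as the limit of the Borel functions $|t_n-t'_n|$ restricted to $X\times_p X$ (or more transparently via $d_\omega(x,y)=\lim_n d_\omega(x,s_n(\omega))-d_\omega(y,s_n(\omega))$ in an appropriate sense). This realizes the fiberwise isometric datum.

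For the $\Gamma$-action, each $\gamma\in\Gamma$ induces an isometry $\gamma\colon X_\omega\to X_{\gamma\omega}$, and thanks to the $\Gamma$-invariance of the fundamental family the coordinate description through $\Psi$ is just a permutation of the indices $n$ (up to relabelling $s_n\leftrightarrow\gamma\cdot s_n=s_{\sigma_\gamma(n)}$). Hence $\gamma$ acts on $X\subset\Omega_0\times\R^\N$ by $(\omega,(t_n))\mapsto(\gamma\omega,(t_{\sigma_\gamma^{-1}(n)}))$, which is Borel; invariance of $d$ on fibers is automatic. Finally, an invariant section $x$ of $\mathbf{X}$ is, by definition of the measurable structure on sections, a measurable assignment $\omega\mapsto x(\omega)\in X_\omega$ with the invariance property $x(\gamma\omega)=\gamma\cdot x(\omega)$. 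Composing with $\Psi$ gives the required $\Gamma$-equivariant measurable map $\Omega_0\to X$.

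The main technical obstacle is verifying that the image of $\Psi$ is actually Borel and that the total space so constructed is standard Borel: one has to check that being in the metric completion of the dense subset $\{s_n(\omega)\}$ translates into a Borel condition on $(t_n)\in\R^\N$. This is handled by writing this completion condition as the intersection of Borel sets (existence of a Cauchy-like sequence of $s_n$'s whose distances to the $s_m$ match prescribed values), using only countably many measurable functions of $\omega$. Once this is granted, the relabelling description of the $\Gamma$-action and the lifting of invariant sections are essentially formal.
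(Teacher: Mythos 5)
Your proof is correct and follows essentially the same route as the paper's: both endow $X=\sqcup_{\omega}X_\omega$ with the Borel structure pulled back from $\Omega_0\times\R^{\N}$ via the coordinates $x\mapsto\bigl(p(x),(d_{p(x)}(x,s_n(p(x))))_n\bigr)$ attached to a fundamental family, and recover the fiber metric as $\sup_n|t_n-t'_n|$ (a supremum rather than a limit, as you half-note). The only difference is presentational: you realize this as an explicit Kuratowski-type embedding and verify Borelness of the image and of the index-permutation action, whereas the paper works with the abstractly generated $\sigma$-algebra and invokes the countably-generated-and-separating criterion; your extra care on the image is welcome but does not constitute a different argument.
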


\begin{proof} Let $\{x^n\}_{n\in\N}$ be a fundamental family for the field $\mathbf{X}$. One can find a Borel subset $\Omega_0\subset \Omega$ of full measure such that all the maps $\omega\mapsto d_\omega(x_\omega^n,x^m_\omega)$ are Borel for all $n,m\in\N$ on $\Omega_0$. Observe that $\Omega_0$ is a Lebesgue space as well \cite[\S12.B]{MR1321597}. Up to replace $\Omega_0$ by $\cap_{\gamma\in\Gamma}\gamma\Omega_0$, we may assume that $\Omega_0$ is $\Gamma$-invariant and still a Lebesgue space. 

Let us set $X=\bigsqcup_{\omega\in\Omega_0}X_\omega$ and define $ p \colon X\to \Omega_0$ such that $ p (x)$ is the unique $\omega\in\Omega_0$ with $x\in X_\omega$. Let us define $\phi_n\colon X\to\R$ by the formula $\phi_n(x)= d_{ p (x)}\left(x,x^n_{ p (x)}\right)$. Now, let $\mathcal{A}$ be the smallest $\sigma$-algebra such that $ p $ and $\phi_n$ are measurable for all $n$. To show that $(X,\mathcal{A})$ is a standard Borel space, it suffices to show that $\mathcal{A}$ is countably generated and separates points \cite[\S12.B]{MR1321597}. It is countably generated because $\Omega_0$ and $\R$ are so. Let  $x\neq y\in X$. If $ p (x)\neq p (y)$ then there is a Borel subset $\Omega’\subset\Omega_0$ such that $ p (x)\in\Omega’$ and $ p (y)\notin\Omega’$ thus $ p ^{-1}(\Omega’)\in\mathcal{A}$ separates $x$ and $y$. If $ p (x)= p (y)=\omega$ then by density of $\{x^n_\omega\}$ in $X_\omega$, there is $n$ such that $\phi_n(x)<\phi_n(y)$ and thus $\mathcal{A}$ separates $x$ and $y$.
Moreover, for $(x,y)\in X\times_ p  X$, we simply note $d(x,y)$ for $d_{p(x)}(x,y)$. Then, $d(x,y)=\sup_{n\in\N}|\phi_n(x)-\phi_n(y)|$ and thus $d$ is a Borel map. Since $\Omega_0$ is $\Gamma$-invariant, $ p \colon X\to \Omega_0$ admits a fiberwise isometric $\Gamma$-action.

If $x$ is a section of $\mathbf{X}$, that is an element of $\Pi_{\omega\in\Omega}X_\omega$ with measurability conditions \cite[Definitions 8 and 9]{BDL}, let us use the same notation for the map $x\colon \Omega_0\to X$ such that $x(\omega)=x_\omega$. By construction $\pi\circ x$ and $\phi_n\circ x$ are measurable and thus $x$ is measurable. If the section is invariant then this yields equivariance of the map $x\colon \Omega_0\to X$.
\end{proof}

\begin{remark}With this lemma, for any two Lebesgue $\Gamma$-spaces $A, B$ with a $\Gamma$-factor map, that is a measurable surjective $\Gamma$-map $\pi\colon A\to B$, relative isometric ergodicity of $\pi$ as stated in \cite[Definition 25]{BDL} follows from Definition \ref{def:relative_ergodic} above. Actually if $\mathbf{X}$ is a metric field over $B$, and $B_0$, $p\colon X\to B_0$ are given by Lemma \ref{lem:field_fiber}, this relative ergodicity is reflected in the following diagram where $A_0=\pi^{-1}(B_0)$.
$$
\xymatrix{
    A _0 \ar[d]_\pi\ar[r]  & X \ar[d]^p \\
    B_0 \ar[r]_{\Id}\ar@{.>}[ur] & B_0
  }
$$

This allows us to use freely the results from \cite{BDL}.
\end{remark}
 
%%%%%%%%%%%%%%%%%
\subsection{Equivariant maps to the set of maximal isotropic subspaces under Zariski-density}\label{sb}
%%%%%%%%%%%%%%%%%%
As before, let $\calH$ be a Hilbert space over $\K$ with a Hermitian form $Q$ of signature $(p,q)$ with $p<q$ and $q\in\N\cup\{\infty\}$. We fix some locally compact second countable group $G$ with a continuous action by isometries on $\calX_\K(p,q)$ and strong boundary $B$. 

We denote by $\mathcal{I}_k$  the space of totally isotropic subspaces of $\mathcal{H}$ of dimension $k\leq p$. Following the end of Section \ref{subsec:X(p,q)}, this space can be identified with a type of vertices of the spherical  building structure on $\partial\calX_{p,q}$. When recalling the signature of the Hermitian form will seem to help comprehension we will include it in our notation, and denote the space of totally isotropic subspaces as $\mathcal I_k(p,q)$. Let us observe that if $p,q$ are finite then $\mathcal I_k(p,q)$ can be identified with some homogeneous space $G/P$ where $G=\OO(p,q)$ and $P$ is a parabolic subgroup. In that case, we endow $G/P$ with the corresponding $\sigma$-algebra and the unique $G$-invariant measure class on it (see e.g. \cite[Appendix B]{MR2415834}).  Observe that when $p=k=1$ (and $q$ is finite or infinite) then $\mathcal I_1(1,q)$ is merely the visual boundary $\partial \calX_\K (1,q)$ of the hyperbolic space $\calX_\K (1,q)$ of dimension $q$ over $\K$.  For the application to maximal representations, the space $\mathcal{I}_p$ of totally isotropic subspaces of maximal dimension plays an important role.
%\begin{definition}We say that $\mathcal{I}_p$  is the \emph{Shilov boundary} of $\mathcal X(p,\infty)$.
%\end{definition}

For example, if $\mathcal H$ is a finite dimensional complex vector space, $\mathcal X_\C (p,q)$ is a complex manifold admitting a bounded domain realization whose Shilov boundary  can be $\SU(p,q)$-equivariantly identified with $\mathcal{I}_p$.  
The purpose of this section is to associate to geometrically dense or Zariski-dense representations $\rho$, an equivariant boundary map with values in the set of maximal isotropic subspaces $\mathcal{I}_p$ (Theorem \ref{Shilov}). 

\begin{remark} Under the hypothesis of Theorem \ref{thm:duc}, we get maps $B\to \mathcal{I}_k$ for at least one $k$: indeed, assume that $\phi$ is a map obtained by Theorem \ref{thm:duc}. Considering the smallest cell of the spherical building at infinity containing $\phi(b)$, one gets a map $B\to\mathcal{F}$ where $\mathcal{F}$ is a space of totally isotropic flags of $\mathcal{H}$ (see Section 6 in \cite{Duc13}). Note that by ergodicity the type of this flag is constant. Thus for each dimension $k$ that appears in this flag, one gets a map $B\to \mathcal{I}_k$. 
\end{remark}

First, we prove opposition for boundary maps to $\mathcal{I}_k$ under Zariski-density.

\begin{proposition}\label{cor} Let $k\leq p$ and assume that $\G$ is countable. Assume that the action $\G\action \calX_\K(p,q)$ is Zariski-dense. If $\phi\colon B\to \mathcal{I}_k$ is a $\G$-equivariant measurable map then for almost every $(b,b’)\in B\times B$, $\phi(b)$ is opposite to $\phi(b')$.
%$\angle(\phi(b),\phi(b'))=\pi.$
\end{proposition}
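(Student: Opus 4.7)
The plan is to argue by contradiction: assume the set of pairs $(b,b')$ for which $\phi(b)$ and $\phi(b')$ are not opposite has positive measure, and extract from this a proper non-trivial closed $\Gamma$-invariant subspace of $\calH$, which contradicts Zariski-density via Example \ref{ex:algsub}(3) (or Proposition \ref{ep}).

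\textbf{Ergodic reduction.} Relative isometric ergodicity of $\pi_1:B\times B\to B$ implies that the diagonal $\Gamma$-action on $B\times B$ is ergodic (take $Z=\{\ast\}$ and $Y=\{0,1\}$ with trivial action in Definition \ref{def:relative_ergodic}). The Borel set $\{(b,b'):\phi(b),\phi(b')\text{ not opposite}\}$ is $\Gamma$-invariant, so if it is not null then it is conull. On it, the integer-valued $\Gamma$-invariant function $(b,b')\mapsto \dim_\K(\phi(b)\cap\phi(b')^\perp)$ is essentially constant, equal to some $d\in\{1,\ldots,k\}$.

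\textbf{Factorization via relative isometric ergodicity.} Put $Y=\{(V,U)\in\calI_k\times\calI_d:U\subset V\}$ with first projection $p:Y\to\calI_k$; the fiber over $V$ is the Grassmannian of $d$-planes in $V$ (a finite-dimensional subset of $\calI_d$). Equipping each fiber with the restriction of the Tits metric from $\partial\calX$, which is $\Isom(\calX)$-invariant and restricts to a separable metric on each finite-dimensional fiber, makes $p$ into a $\Gamma$-map admitting a fiberwise isometric action. Define the $\Gamma$-equivariant measurable map
\[
\tilde\psi:B\times B\to Y,\qquad \tilde\psi(b,b')=\bigl(\phi(b),\,\phi(b)\cap\phi(b')^\perp\bigr),
\]
which satisfies $p\circ\tilde\psi=\phi\circ\pi_1$. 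Relative isometric ergodicity provides a $\Gamma$-equivariant measurable $\phi':B\to Y$ with $\tilde\psi(b,b')=\phi'(b)$ for a.e. $(b,b')$. Writing $\phi'(b)=(\phi(b),U(b))$, we conclude $\phi(b)\cap\phi(b')^\perp=U(b)$ is independent of $b'$, so $U(b)\perp\phi(b')$ for a.e.\ $b'$ (and a.e.\ $b$).

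\textbf{Contradicting Zariski-density.} Let $W\subset\calH$ be the closed $\K$-linear span of the essential image of $\phi$, i.e.\ the smallest closed subspace of $\calH$ containing $\phi(b)$ for $\mu$-almost every $b$. Since $\phi$ is $\Gamma$-equivariant, $W$ is $\Gamma$-invariant and closed; it is nonzero because $\phi(b)$ is a nonzero $k$-dimensional subspace for a.e.\ $b$. The previous step gives $U(b)\subset W^\perp$ for a.e.\ $b$, and since $\dim U(b)=d\geq 1$, we have $W^\perp\neq 0$, hence $W\neq\calH$. By Example \ref{ex:algsub}(3), $\Stab(W)\cap\OOO_\K(p,q)$ is a proper standard algebraic subgroup of $\OOO_\K(p,q)$ containing (the lift of) $\Gamma$, contradicting Zariski-density.

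The main technical obstacle is verifying that $p:Y\to\calI_k$, with the Tits metric restricted to each fiber, genuinely fulfills Definition \ref{def:relative_ergodic}'s requirements (Borel measurability of the fiberwise distance and separability of each fiber); once this is in place, the core of the argument—forcing $\phi(b)\cap\phi(b')^\perp$ to depend only on $b$ and then spanning a $\Gamma$-invariant proper subspace—is a direct combination of relative isometric ergodicity with the standard algebraic structure of stabilizers developed in Section \ref{sec:alggroup}.
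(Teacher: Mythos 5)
There is a genuine gap in your middle step. The fibration $p\colon Y\to\calI_k$ with fiber $\mathrm{Gr}_d(V)$ over $V$ does \emph{not} admit a fiberwise isometric action in the sense of the paper's definition, and this cannot be repaired by a better choice of metric. The stabilizer of an isotropic $V$ in $\OO(p,q)$ surjects onto $\GL(V)$, which acts on pairs of $d$-planes of $V$ with only finitely many orbits (classified by $\dim(U\cap U')$); hence any $G$-invariant Borel map $d\colon Y\times_pY\to\R$ takes only finitely many values on each fiber. A metric with finitely many values on the uncountable set $\mathrm{Gr}_d(V)$ (for $0<d<k$) is uniformly discrete, hence not separable — and indeed the Tits metric you propose is exactly of this kind: distinct vertices of the same type in the spherical building are at Tits distance bounded below by a positive constant. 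So the hypothesis of relative isometric ergodicity is not satisfiable for this fibration, and the conclusion you draw from it — that $\phi(b)\cap\phi(b')^\perp$ is independent of $b'$ — is not established. (Your argument does go through in the degenerate sub-case $d=k$, where the fiber is a point, and your ergodic reduction and final spanning step are fine as such; the problem is confined to $0<d<k$, which is precisely the hard case.)

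The paper avoids this entirely by linearizing the non-opposition condition: it passes to the lines $\ell_b\subset\Exterior^k\calH$ determined by $\phi(b)$, for which ``$\phi(b)$ and $\phi(b')$ not opposite'' becomes the single orthogonality relation $(\ell_b,\ell_{b'})=0$ for the induced form on $\Exterior^k\calH$. A Fubini argument then produces, from a conull set of such pairs, a proper closed $\G$-invariant subspace of $\Exterior^k\calH$ (contained in $\ell_b^\perp$ for a suitable $b$), and Lemma \ref{machin} shows its stabilizer is a proper standard algebraic subgroup, contradicting Zariski-density. In other words, the exterior-power trick removes any need to know \emph{which} subspace of $\phi(b)$ is orthogonal to $\phi(b')$, which is exactly the information your factorization step was trying (and is unable) to extract. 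If you want to keep your outline, replace the factorization step by this passage to $\Exterior^k\calH$ and take the invariant span there rather than in $\calH$.
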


\begin{proof}

We  denote by $V_b$ the linear subspace of dimension $k$ corresponding to  $\phi(b)$ and let $\ell_b$ be the corresponding line in $\Exterior^k\mathcal{H}$. 
  By ergodicity of the action $\G\action B\times B$, one of the three following cases happens for almost all $(b,b')$:
  \begin{itemize}
  \item either $\ell_b=\ell_{b'}$ (which means that $V_b=V_{b'}$),
  \item $\ell_b$ and $\ell_{b'}$ span a totally isotropic plane in $\Exterior^k\mathcal{H}$ (in other words $0\neq V_b\cap V_{b'}^\bot\neq 
V_b$) 
\item or $\ell_b$ and $\ell_{b'}$ span a non-degenerate plane, that is $V_b\cap V_{b'}^\bot=\{0\}$. In other words $V_b$ and $V_{b’}$ are opposite.
\end{itemize}
Our goal is to show that only the third case can happen. Assume first that $V_b=V_{b'}$ for almost every $(b,b')$. Then the map $b\mapsto V_b$ is essentially constant and its essential image is a $\G$-invariant vertex. This contradicts the assumption that $\G$ does not fix a point in $\partial\calX_\K(p,\infty)$.
 
 Now assume that for almost every $(b,b')$ the lines
 $\ell_b$ and $\ell_{b'}$ are orthogonal, namely their span is an isotropic plane. Then, thanks to Fubini's theorem, there exists $b\in B$ and $B_b\subset B$ with full measure such that, for any $b'\in B_b$, $\ell_{b'}$ is orthogonal to $\ell_b$. Let $B'$ be the intersection $\cap_{\gamma\in \G}\gamma B_b$. The set $B'$ has full measure and is $\G$-invariant, thus the space spanned by $\left\{\ell_{b’},\ b’\in B’\right\}$ is a proper subspace (being included in the orthogonal of $\ell_b$) and is $\G$-invariant. The closure of this space is not $\OO(p,\infty)$-invariant since this group acts transitively on the space of totally isotropic subspaces of dimension $k$. We conclude by using Lemma \ref{machin}. 
\end{proof}

In the remaining of this section, our goal is to show existence of maps from a strong boundary $B$ to $\mathcal{I}_p$. We begin our discussion by observing that for every $k$ there is a natural fiberwise isometric action of $\G$ over $\mathcal{I}_k$: we denote by $\mathcal V_k$ the space of subspaces $V$ of $\mathcal{H}$ with dimension $p$ such that $Q|_V$ is non-negative and $\ker(Q|_V)$ has dimension $k$. We endow both $\mathcal I_k$ and $\mathcal V_k$ with the induced topologies coming from the corresponding Grassmannians $\mathcal{G}_k,\mathcal{G}_p$ of subspaces of dimension $k$ and $p$ in $\mathcal{H}$. Let us recall that a  complete and separable distance on the Grassmannian $\mathcal{G}_m$ of all subspaces of dimension $m$ is given by $$d(V,W)^2=\sum_{i=1}^m\alpha_i^2$$ where $\alpha_1,\dots,\alpha_m$ are the principal angles between $V$ and $W\in\mathcal{G}_m$. This topology also coincides with the Wisjman topology of the hyperspace $2^\calH$.

The natural projection 
$$\begin{array}{rcl}
\pi\colon \mathcal V_k&\to& \mathcal{I}_k\\
V&\mapsto &\ker(Q|_V)
\end{array}$$
is continuous. For $V_0\in\mathcal{I}_k$, the fiber  $\pi^{-1}(V_0)$ can be identified with a symmetric space $\mathcal X(V_0^\bot/V_0,Q)$ that we define in the following lines. The kernel of $Q$ restricted to $V_0^\bot$ is exactly $V_0$ and thus $Q$ defines a strongly non-degenerate Hermitian form on $V_0^\bot/V_0$ of signature  $(p-k,q)$. So, we define  $\mathcal X(V_0^\bot/V_0,Q)$%\footnote{One can think to $\mathcal{X}$ as a functor  from the category of strongly non-degenerate quadratic spaces of  index $i\in\N$ with isometric embeddings  to the category of metric spaces with isometric embeddings.} 
to be the symmetric space associated to that Hermitian form, that is the collection positive subspaces of $V_0^\bot/V_0$ of dimension $p-k$. The metric on $\mathcal X(V_0^\bot/V_0,Q)$ is given by the hyperbolic principal angles \cite[\S 3.1]{Duc13}. The preimages of such a positive subspace under the projection $V_0^\bot\to V_0^\bot/V_0$ are in bijective correspondence with the elements in the fiber of $\mathcal V_k$ above $V_0$. 

Recall that $V_0,W_0\in\mathcal{I}_k$ are opposite if the restriction of $Q$ to $V_0+W_0$ is non-degenerate and thus has signature $(k,k)$. If $V_0,W_0$ are opposite then $\mathcal{H}=V_0\oplus W_0^\bot$ because $W_0^\bot$ has codimension $k$ and $V_0\cap W_0^\bot=\{0\}$. So, there is  a bijective correspondence $\sigma_{V_0,W_0}\colon \mathcal X(V_0^\bot/V_0,Q)\to \mathcal X(W_0^\bot/W_0,Q)$ given by the formula
$$\sigma_{V_0,W_0}(V)=(V\cap W_0^\bot)+W_0$$
for $V\in\pi^{-1}(V_0)$. This map is well-defined  because $V\cap W_0^\bot$ has dimension $p-k$ and is positive definite for $Q$. The inverse is given by 

$$\sigma^{-1}_{V_0,W_0}(W)=\sigma_{W_0,V_0}(W)=(W\cap V_0^\bot)+V_0.$$

\begin{lemma}If $V_0,W_0\in\mathcal{I}_k$ are opposite then the map $\sigma_{V_0,W_0}\colon  \mathcal X(V_0^\bot/V_0,Q)\to \mathcal X(W_0^\bot/W_0,Q)$ is an isometry.\end{lemma}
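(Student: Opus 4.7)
The plan is to exhibit $\sigma_{V_0,W_0}$ as the map induced on symmetric spaces by a Hermitian isomorphism between the quotients $V_0^\bot/V_0$ and $W_0^\bot/W_0$. Once this is done, the fact that the symmetric space structure (in particular the metric given by hyperbolic principal angles) depends only on the Hermitian form gives the isometry property for free. The main step is therefore algebraic: construct the right Hermitian isomorphism and identify it with $\sigma_{V_0,W_0}$.

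The geometric input is the orthogonal splitting
\[
\calH = (V_0+W_0)\oplus (V_0^\bot\cap W_0^\bot),
\]
which exists because $V_0$ and $W_0$ are opposite, so that $Q$ restricted to $V_0+W_0$ is non-degenerate of signature $(k,k)$. Using this splitting, I would check that $V_0\cap W_0^\bot=\{0\}$ (any such vector would be orthogonal to the non-degenerate subspace $V_0+W_0$ while lying in it), which gives the direct sum decomposition
\[
V_0^\bot = V_0\oplus (V_0^\bot\cap W_0^\bot),
\]
together with its analogue obtained by exchanging the roles of $V_0$ and $W_0$. Composing the canonical projections, one gets a $\K$-linear bijection
\[
\tau\colon V_0^\bot/V_0 \xrightarrow{\;\sim\;} V_0^\bot\cap W_0^\bot \xrightarrow{\;\sim\;} W_0^\bot/W_0,
\]
and a direct computation shows that $\tau$ preserves the Hermitian forms induced by $Q$ on both quotients, since on the common intermediate space $V_0^\bot\cap W_0^\bot$ the form is simply the restriction of $Q$.

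It remains to match $\sigma_{V_0,W_0}$ with the map on positive $(p-k)$-subspaces induced by $\tau$. Given $V\in\pi^{-1}(V_0)$, intersecting the decomposition $V_0^\bot=V_0\oplus(V_0^\bot\cap W_0^\bot)$ with $V$ yields $V = V_0\oplus (V\cap W_0^\bot)$, so the class of $V$ modulo $V_0$ corresponds via the first isomorphism to $V\cap W_0^\bot\subset V_0^\bot\cap W_0^\bot$. Applying the second isomorphism and lifting back to $W_0^\bot$ produces the subspace $(V\cap W_0^\bot)+W_0$, which is precisely $\sigma_{V_0,W_0}(V)$. Thus $\sigma_{V_0,W_0}$ is the map on positive definite $(p-k)$-subspaces induced by the Hermitian isomorphism $\tau$, and since hyperbolic principal angles are invariants of the Hermitian form, $\sigma_{V_0,W_0}$ is an isometry between $\mathcal X(V_0^\bot/V_0,Q)$ and $\mathcal X(W_0^\bot/W_0,Q)$. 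The only delicate point is the algebraic identity $V_0\cap W_0^\bot=\{0\}$, which is exactly where the opposition hypothesis is used; everything else is formal and unaffected by the fact that $\calH$ may be infinite dimensional, since all the subspaces involved have finite codimension in $\calH$ or arise as intersections thereof.
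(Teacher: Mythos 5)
Your proof is correct and follows essentially the same route as the paper: the same orthogonal decomposition $\calH=(V_0\oplus W_0)\oplus^\bot(V_0^\bot\cap W_0^\bot)$, the same identification of both quotients with the intermediate space $V_0^\bot\cap W_0^\bot$ as Hermitian spaces, and the same conclusion that $\sigma_{V_0,W_0}$ is the composition of the two induced isometries because the metric via hyperbolic principal angles depends only on the Hermitian form. No gaps.
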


\begin{proof}Since $V_0$ and $W_0$ are opposite, we have the following orthogonal decomposition
$$\mathcal{H}=\left(V_0\oplus W_0\right)\oplus^\bot\left(V_0^\bot\cap W_0^\bot\right)$$
and the restriction of $Q$ to $V_0^\bot\cap W_0^\bot$ is non-degenerate of signature $(p-k,\infty)$. In particular, $V_0^\bot=V_0\oplus (V_0^\bot\cap W_0^\bot)$ and thus the quotient map induces an isomorphism $(V_0^\bot/V_0,Q)\simeq (V_0^\bot\cap W_0^\bot,Q)$ as spaces with Hermitian forms.

Now, if $V\in\pi^{-1}(V_0)$ is written $V=V_0+V'$ where $V'=V\cap W_0^\bot$ then $V'\subset V_0^\bot\cap W_0^\bot$. In particular $\sigma_{W_0,V_0}(V)=V'+W_0$. By construction of the metric via hyperbolic principal angles, the following map are isometries
$$\begin{array}{ccccc}
\mathcal{X}(V_0^\bot/V_0,Q)&\leftarrow&\mathcal{X}(V_0^\bot\cap W_0^\bot,Q)&\to&\mathcal{X}(W_0^\bot/W_0,Q)\\
V_0+V'&\mapsfrom&V'&\mapsto&W_0+V'
\end{array}.$$
Finally, $\sigma_{W_0,V_0}$ is an isometry being the composition of two isometries.\end{proof}

In particular, $\sigma_{V_0,W_0}$ maps flat subspaces to flat subspaces.

\begin{proof}[Proof of Theorem \ref{Shilov} in the Zariski-dense case] In this proof, we freely use measurable metric fields thanks to Lemma \ref{lem:field_fiber}.
We know from Theorem \ref{thm:duc} that there exists a $\G$-map to the visual boundary $\partial\mathcal X(p,\infty)$ and, by ergodicity, we get a $\G$-equivariant map $\phi\colon B\to\mathcal{I}_k$ for some $k\geq1$. Assume that $k$ is a maximal such integer.  If $k=p$ we are done. Assume then that $k<p$. Denoting $V_b\in\mathcal{I}_k$ for $\phi(b)$ and $X_b=\calX(V_b^\bot/V_b,Q)$, we obtain a measurable field of (non-trivial) CAT(0) spaces $\mathbf{X}=\{X_b\}$ with a $\Gamma$-isometric action. Thanks to \cite[Theorem 1.8]{Duc13}, either there is a an invariant section of the metric field $\mathbf{\partial X}=\{\partial X_b\}$ or there is a $\Gamma$-equivariant Euclidean subfield $\mathbf{F}=\{F_b\}$ with $F_b\subset {X}_b$ for almost all $b\in B$.  

In the first case, the stated result easily follows: to every point in $\partial X_b$, one can associate a (non-trivial) totally isotropic flag in $V_b^\bot/V_b$ and we can choose the totally isotropic subspace of maximal dimension in such a flag (whose dimension is essentially constant by ergodicity) to define a totally isotropic subspace $V^\prime_b$ of $\mathcal{H}$ strictly containing  $V_b$. Thus, we get a $\Gamma$-map $\phi^\prime\colon B\to \mathcal{I}_{k^\prime}$ for $k^\prime>k$, contradicting the maximality of $k$.

In order to conclude the proof it is enough to show that, under our hypotheses, there cannot exist a $\Gamma$-equivariant Euclidean subfield. So let us assume that there exists such a subfield. In other words, we  have  a $\Gamma$-map $\psi_0\colon B\to F$ where $F$ is the Polish space constructed from $\mathbf{F}$ thanks to Lemma \ref{lem:field_fiber}, such that $\psi_0(b)$ is a flat in $X_b$. Let us merely denote $F_b$ for $\psi_0(b)$. Note that the map $b\mapsto \dim(F_b)$ is measurable (\cite[Lemma 14]{BDL}), hence $F_b$ is essentially of constant dimension. Among all possible such maps $\psi_0$, we choose one such that this dimension, say $k_0$, is minimal.

 Let us denote $\sigma_{b,b^\prime}=\sigma_{\phi(b),\phi(b')}$ the correspondence isometry from $ X_b$ to $ X_{b^\prime}$ defined above. 

We claim first that $\sigma_{b’,b}\left(F_{b^\prime}\right)$ is parallel to $F_b$. The proof of this statement is very similar to the one of \cite[Theorem 34]{BDL}. We will explain the proof quickly and refer to \cite{BDL} for more details (in particular about measurability of the various maps which appear during the proof).

 Consider the function $f_{b,b’}$ defined on $F_b$ by  $f_{b,b’}(x)=d(x,\sigma_{b’,b}(F_{b^\prime}))$ (recall that both $F_b$ and $\sigma_{b’,b}(F_{b^\prime})$ are flat subspaces of $X_b$). Then $f_{b,b’}$ is a convex function on the Euclidean space $F_b$.  Using Proposition 4 from \cite{BDL}, we see that 4 cases are possible for $f_{b,b’}$, which are described below. By the arguments from the proof of \cite[Theorem 34]{BDL}, these four conditions are measurable and $\G$-invariant, so that one of them happens almost surely.

The first case is when $f_{b,b’}$ does not attain its infimum $m$. In that case, one can consider the sequence $E_n^{b,b'}$ of subset of $F_b$ defined as $E_n^{b,b'}=\{x\mid f(x)\leq m+1/n\}$. By \cite[Proposition 8.10]{Duc13} this sequence of subsets gives a $\Gamma$-map $\xi:B\times B\to \partial F$ where $\partial F$ is the Borel space associated to the metric field $\partial \mathbf{F}$, such that $\xi(b,b’)\in \partial F_b$ for almost all $(b,b’)$. Since $\partial \mathbf{F}$ a metric field with a $\Gamma$-action, using relative isometric ergodicity, we see that $\xi$ does not in fact depend on $b’$, and therefore we have a map $\xi: B\to \partial X$ (where $\partial X$ is the Borel space associated to the metric field $\partial \mathbf X$) such that $\xi(b)\in\partial X_b$. Now $X_b=\calX(V_b^\bot/V_b,Q)$ has a boundary which is a spherical building where cells correspond to totally isotropic flags in $V_b^\bot/V_b$. Therefore to a point in the boundary one can associate a totally isotropic subspace $W\subset V_b^\bot/V_b$, which we can lift to a totally isotropic space $\overline{W}$ containing $V_b$ in $\mathcal H$. Thus the map $\xi$ gives rise to a map $B\to \mathcal I_{k'}$ with $k'>k$, contradicting the assumption on $k$.

If $f_{b,b’}$ attains its minimum $m$, let $Y=f_{b,b’}^{-1}(m)$, which depends on $b$ and $b'$.  
The second case is when $Y$ is bounded. Then one can consider its circumcenter $y(b,b')$. The map $(b,b')\mapsto y(b,b')$ is measurable \cite[Lemma 8.7]{Duc13} and $\Gamma$-equivariant. By relative isometric ergodicity, it does not in fact depend on $b'$. So there is a $\Gamma$-map $x\colon B\to X$ such that $x(b)\in X_b$. In particular, $\{x(b)\}$ is a Euclidean subfield of $X_b$ and by minimality  of $k_0$, $F_b=\{x_b\}$ and thus we get parallelism of $\sigma_{b,b’}(F_{b’})$ and $F_b$ because any two points are parallel as Euclidean subspaces of dimension 0.

In the third case, one can write $Y=E\times T$ where $E$ is subflat and $T$ is bounded. Let $t$ be the circumcenter of $T$, and let $E'=E\times\{t\}$. Then $E'$ is a subflat of some dimension $d$ which, by ergodicity, does not depend on $(b,b')$. Now the set of subspheres of dimension $d$ of a $k_0$-dimensional Euclidean space is metric field with a $\Gamma$-invariant metric \cite[Lemma 20]{BDL}. By relative isometric ergodicity the map $(b,b')\mapsto \partial E'$ does not depend on $b'$. By the second part of \cite[Lemma 20]{BDL}, the set of Euclidean subsets of $X_b$ whose boundary is $\partial E'$ is again a metric field with an $\Gamma$-invariant metric. Thus relative isometric ergodicity again allows us to conclude that the map $(b,b')\mapsto E'$ does not depend on $b'$. In other words we get a map which associates to $b$ a subflat of $F_b$. Since we assumed the dimension of $F_b$ to be minimal, this map must be equal to $\psi_0$. This means that $f_{b,b’}$ is constant on $F_b$, and therefore $F_b$ and  $\sigma_{b’,b}(F_b^\prime)$ are parallel.

In the last case, one can write $Y=E\times T$ where $T$ is unbounded, but $\partial T$ has a center. Then we get map which associates to $(b,b')$ the center of $\partial T$, which is a point in $\partial X_b$. We conclude by the same argument as in the first case.

%We may assume that $\sigma_{b,b^\prime}(\psi_0(b^\prime))$ is parallel to $\psi_0(b^\prime)$ (otherwise, one gets a map to the boundary or a map to flats of smaller dimension). 

This concludes the proof of the claim: $F_b$ is (almost surely) parallel to $\sigma_{b’,b}(F_{b’})$. 
The set of flats parallel to $F_b$ is a metric field with an $\Gamma$-invariant metric. Therefore, one can apply again relative isometric ergodicity to prove that the map $(b,b')\mapsto \sigma_{b’,b}(F_{b’})$ does not depend on $b'$. In other words we get a map $\psi_1$ such that for almost all $b^\prime$, $\sigma_{b,b^\prime}(\psi_0(b^\prime))=\psi_1(b)$. Let us denote $G_b=\psi_1(b)$. One has, $\sigma_{b,b’}(F_{b’})=G_b$ and since $\sigma_{b’,b}=\sigma_{b,b’}^{-1}$, one has also $\sigma_{b,b’}(G_{b’})=F_{b}$. Thus, $\sigma_{b,b’}$ maps the flat equidistant to $F_{b’}$ and $G_{b’}$ to the flat equidistant to $F_b$ and $G_b$.

Up to replacing $\psi_0(b)$ by the flat equidistant to $\psi_0(b)$ and $\psi_1(b)$, we may assume that $\psi_0(b)=\psi_1(b)$ and thus $\sigma_{b,b^\prime}(F_{b^\prime})=F_b$ for almost all $(b,b^\prime)\in B\times B$. Let us recall that points in $X_b$ are positive definite subspaces $W\subset V_b^\bot/V_b$ and we denote by $\overline{W}$ the pre-image of $W$ under the quotient map $V_b^\bot\to V_b^\bot/V_b$. Let us denote $\psi(b)=\overline{F_b}\subset\mathcal{V}_k$ where $\overline{F_b}$ is the collection of $\overline{W}$ for $W\in F_b$. Since $\sigma_{b,b^\prime}(F_{b^\prime})=F_b$, if $W\in F_b$ then $W’=\sigma_{b’,b}(W)\in F_{b’}$ satisfies $\overline{W}=V_b+(\overline{W}\cap\overline{W’})$. In particular $\Span(\psi(b))\cap\Span(\psi(b’))\neq\{0\}$ for almost all $(b,b’)$. Moreover, the dimension of this intersection is essentially constant by ergodicity.%\footnote{BP: What is the difference of this with \cite[Proposition 10.2]{Duc13}? BD: The difference is that we need more technicality here. In \cite[Proposition 10.2]{Duc13}, I used only double ergodicity. Here we really need the relative thing because $\sigma_{b',b}(\psi_0(b'))$ really depends on $b$ and $b'$ and not only on $b'$.}

So, there is $b_0\in B$ such that for almost every $b$, $\Span(\psi(b_0))\cap\Span(\psi(b))\neq\{0\}$ and this set, $B^\prime$, of full measure can be assumed to be $\Gamma$-invariant. Let $\ell_b$ be the line corresponding to $\Span(\psi(b))$ in $\Lambda^d\mathcal{H}$ where $d$ is the dimension of $\Span(\psi(b))$. In particular, for all $b\in B'$, $\ell_b$ is in the kernel of the map

$$\begin{array}{rcl}
\Lambda^d\mathcal{H}&\to&\Lambda^{2d}\mathcal{H}\\
v&\mapsto&v\wedge v_{b_0}
\end{array}$$
where $v_{b_0}$ is a fixed non-trivial vector in $\ell_{b_0}$. As in the proof of Proposition \ref{cor}, the closure of the span of $\{\psi(b)\}_{b\in B'}$ is a non-trivial $\Gamma$-invariant subspace in $\Lambda^d\mathcal{H}$. Thanks to Lemma \ref{machin}, we have a contradiction with the Zariski-density assumption.

The statement about transversality is a direct consequence of Proposition \ref{cor}.
\end{proof}
%%%%%%%%%%%%%%%%%

%%%%%%%%%%%%%%%%%
\subsection{Low rank cases}
%%%%%%%%%%%%%%%%%

In this subsection, we prove that if the rank of the target is at most 2 then Zariski-density can be relaxed to geometric density to obtain the desired boundary map. The difference between the rank 1 or 2 cases and the general case comes from the complexity of the relative positions of finitely many points in $\calI_k$ for $k\leq p$. This complexity increases with $p$ but remains manageable in small ranks.
\begin{theorem}\label{thm:bnd_map_small_rank}
Let $\Gamma$ be a countable group with strong boundary $B$ and let $\rho\colon\G\to\PO_\K(p,\infty)$  be a representation. Assume that $p\leq 2$ and $\rho$  has no invariant linear subspace of dimension at most 4. Then there is a $\Gamma$-map $\phi\colon B\to\Ii_p$ such that for almost every $(b,b’)\in B\times B$, $\phi(b)$ is opposite to $\phi(b')$.
\end{theorem}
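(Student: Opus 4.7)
The plan is to apply Theorem \ref{thm:duc} to obtain a $\Gamma$-equivariant measurable map $B\to \partial\calX_\K(p,\infty)$---the hypothesis on invariant subspaces readily implies the action is non-elementary, since fixed boundary points and invariant flats give invariant subspaces of dimensions at most $p$ and $2p$ respectively, both bounded by $4$---and then to refine this using the spherical building structure on $\partial\calX_\K(p,\infty)$ and ergodicity of $\Gamma\action B$ to a $\Gamma$-equivariant map $\phi\colon B\to \calI_k$ for some maximal $1\le k\le p$. The theorem reduces to showing that (i) $k=p$ and (ii) $\phi$ is essentially valued in pairs of opposite maximal isotropics; both claims will be proved by arguing that each failure produces a $\Gamma$-invariant subspace of $\mathcal H$ of dimension at most $4$, contradicting the hypothesis.

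For claim (ii) I follow the trichotomy of Proposition \ref{cor} on $B\times B$: the ``equal'' case gives an invariant $k$-subspace directly. The ``orthogonal'' case is automatic for $p=1$ since the Witt index of $(1,\infty)$ equals $1$; for $p=2, k=2$ it yields an isotropic line $L(b,b')=\phi(b)\cap\phi(b')^\perp \subset \phi(b)$, to which relative isometric ergodicity applied to the $\Gamma$-bundle of lines in $\phi(b)$ equipped with the Tits metric inherited from the link of $\phi(b)$ in the building produces an auxiliary $\Gamma$-map $L\colon B\to\calI_1$ satisfying $L(b)\perp L(b')$ almost surely. A second trichotomy applied to $L$, combined with the bound $2$ on the Witt index of $(2,\infty)$, then forces either an invariant isotropic line or an invariant totally isotropic subspace of dimension $2$. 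The case $p=2, k=1$ is analogous and yields an invariant totally isotropic subspace of dimension at most $2$ directly.

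For claim (i), only the case $p=2, k=1$ is substantive. I apply the dichotomy of \cite[Theorem 1.8]{Duc13} to the $\Gamma$-equivariant measurable field $X_b := \calX(\phi(b)^\perp/\phi(b),Q) \simeq \calX_\K(1,\infty)$ of hyperbolic spaces, following the Zariski-dense proof of Theorem \ref{Shilov} in \S\ref{sb}: an invariant measurable section of the boundary field $\partial X_b$ lifts to a $\Gamma$-map $B\to\calI_2$, contradicting maximality of $k$; otherwise one obtains an invariant Euclidean subfield $F_b\subset X_b$ of minimal fiberwise dimension $d_0\in\{0,1\}$. When $d_0=1$, $F_b$ is a geodesic whose two ideal endpoints lift to an equivariant unordered pair $\{U_b^\pm\}$ of isotropic $2$-subspaces of $\mathcal H$ containing $\phi(b)$, and careful tracking of the $\Gamma$-equivariant $3$-dimensional degenerate subspace $U_b^+ + U_b^-$ (whose radical is $\phi(b)$) produces via a further relative isometric ergodicity argument an invariant subspace of dimension at most $3$.

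The main technical obstacle is the case $d_0=0$: here $F_b=\{W_b\}$ is a point and $\overline{W_b}\subset \phi(b)^\perp$ is a $2$-dimensional degenerate subspace with radical $\phi(b)$ and positive-definite quotient. Mimicking the Zariski-dense argument of \S\ref{sb}, one shows that $L(b,b') := \overline{W_b}\cap\overline{W_{b'}}$ is a line for almost every $(b,b')$, whose ergodically constant type is either equal to $\phi(b)$---ruled out by symmetry combined with the opposition of $\phi\colon B\to\calI_1$ already established---or positive-definite. The hard part will be running relative isometric ergodicity on the $\Gamma$-bundle of positive-definite lines in $\overline{W_b}$, which carries only the structure of a horosphere based at $\phi(b)$ in the symmetric space of positive-definite lines of $\mathcal H$ rather than a canonical geodesic metric, and hence demands a careful normalization of the horospherical parameters to produce a fiberwise $\Gamma$-invariant metric. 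Once $L(b,b')=L(b)$ is extracted, the relation $L(b)\perp\phi(b')$ almost surely combined with a signature-counting argument on the $\Gamma$-invariant closed spans of $\{\phi(b)\}_{b\in B}$ and $\{L(b)\}_{b\in B}$---using crucially that $\mathcal H$ has signature $(2,\infty)$, so at most two independent positive directions are available---produces a $\Gamma$-invariant $1$-dimensional subspace, completing the contradiction.
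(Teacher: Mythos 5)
Your skeleton matches the paper's: non-elementarity from the dimension hypothesis, Theorem \ref{thm:duc} to get $\phi\colon B\to\Ii_k$ for a maximal $k$, the trichotomy of Proposition \ref{cor} for opposition, and the dichotomy of \cite[Theorem 1.8]{Duc13} applied to the field $X_b=\calX(\phi(b)^\bot/\phi(b),Q)$ to push $k=1$ up to $k=2$. But at every finishing step you reach for relative isometric ergodicity on a fibred space of lines, and this is where the argument breaks. In the $p=2$, $k=2$ opposition case, the fibre over $b$ is the set of lines in the totally isotropic plane $\phi(b)$; any two distinct such lines are mutually orthogonal, so the Tits metric you propose makes the fibre an uncountable equidistant set, which is not a separable metric space, and Definition \ref{def:relative_ergodic} does not apply. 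The obstruction is structural, not cosmetic: the stabilizer of $\phi(b)$ acts on $\P(\phi(b))\cong\K\P^1$ through essentially all of $\PGL_2(\K)$, which preserves no useful metric, and the same problem kills the ``horospherical'' metric you would need in the $d_0=0$ case --- a step you yourself leave open (``the hard part will be\dots''). Since the whole point of Theorem \ref{thm:bnd_map_small_rank} is to dispense with Zariski-density, you cannot fall back on Lemma \ref{machin} and exterior powers either, so as written claims (i) and (ii) for $p=2$ are not proved.

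What the paper does instead, and what you are missing, is a purely elementary projective argument that exploits the low rank: whenever you have an a.e.-defined map $b\mapsto V_b$ into subspaces of bounded dimension such that $V_b\cap V_{b'}\neq\{0\}$ for a.e.\ pair but $b\mapsto V_b$ is not essentially constant and the intersections are not essentially a fixed line, pick $V_1,V_2$ generic in the essential image and $V_3$ not containing $V_1\cap V_2$; then $V_3=(V_1\cap V_3)\oplus(V_2\cap V_3)\subset V_1+V_2$, and a short case analysis shows \emph{every} $V_{b'}$ in the essential image lies in $V_1+V_2$. Since $V_1+V_2$ has dimension at most $4$ and is $\Gamma$-invariant, this contradicts the hypothesis directly. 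This one observation replaces all of your ergodicity arguments: it handles the non-opposite $k=2$ case (isotropic planes sharing lines, $\dim(V_1+V_2)=3$), the $d_0=0$ case (the degenerate planes $\overline{W_b}$ meet in positive lines), and the $d_0=1$ case (the $3$-dimensional signature-$(1,1)$ spans meet in $2$-dimensional subspaces, $\dim(V_1+V_2)\leq 4$). I recommend you replace each relative-isometric-ergodicity step by this span argument; the rest of your outline then goes through.
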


\begin{proof}Let us prove first that the induced action on $\calX_\K(p,\infty)$ is non-elementary. If there is a fixed point at infinity then there is an invariant isotropic subspace of dimension at most $p$ and if there is a flat subspace of dimension $d$, the span of its points is a subspace of dimension $2d\leq4$.

In case $p=1$, the whole visual boundary is identified with $\Ii_p$ and opposition simply means that the map is not essentially constant (which is the case, otherwise there would be an invariant isotropic line). So the existence is guaranteed by Theorem~\ref{thm:duc} and it is not constant because there is no fixed point at infinity.

Now assume $p=2$. We know the existence of a $\Gamma$-map $\phi\colon B\to\Ii_k$ with $k=1$ or $2$ by Theorem~\ref{thm:duc}. Let us prove opposition first, in both cases. If $k=1$, two isotropic lines are not opposite if they are orthogonal. By double ergodicity of  $\Gamma$, if the opposition condition is not satisfied then one can find a subset $B_1$ of $B$ of full measure such that for all $b\in B_1$ and $\gamma\in\Gamma$, $\phi(b)$ and $\phi(\gamma b)$ are orthogonal. Fix $b\in B_1$. In particular, the span of $\{\phi(\gamma b)\}_{\gamma\in\Gamma}$ is totally isotropic (thus of dimension at most 2) and $\Gamma$-invariant, contradicting the assumption.

Now if $k=2$, two distinct isotropic planes are not opposite if and only if their intersection is a line. We claim that the essential image of $\phi$ is given by isotropic planes with a common line. Let $V_b=\phi(b)$, assume the map $\phi$ is not essentially constant and choose $V_1,V_2$ distinct isotropic planes with a common line $\ell=V_1\cap V_2$ such that almost surely $V_b\cap V_i$ is a line. If $\ell$ lies in $V_b$ almost surely, then $\ell$ is $\Gamma$-invariant. So asume that $\ell$ is not essentially contained in $V_b$, then there is $V_3$ in the image of $\phi$ such that $\ell$ is not in $V_3$. So $\ell_1=V_1\cap V_3$ and $\ell_2=V_2\cap V_3$ are distinct lines and thus $V_3=\ell_1\oplus\ell_2$ lies in $V_1+V_2$. Now, for any $b’\in B$, if $V_{b’}$ contains $\ell$ then $V_{b’}$ is spanned by a $\ell$ and a line in $V_3$. If not, $V_{b’}$ mets $V_1$ and $V_2$ in two different lines. In both case $V_{b’}$ lies in $V_1+V_2$. So,  $V_b$ lies in $V_1+V_2$ which thus $\Gamma$-invariant. So we have a contradiction and thus we know that $\phi$ has the opposition property.\\

We conclude the proof by showing that if the image of $\phi$ lies in $\Ii_1$ then there is also a  $\Gamma$-map to $\Ii_2$. We rely on the beginning of the proof of Theorem~\ref{Shilov} in the Zariski-dense case before the appearance of stabilizers of subspaces in some exterior power at the very end. Let us denote $\ell_b$ for the line $\phi(b)$. In particular we can reduce to one of the following two cases: either there is an invariant section of the field  $X_b=\mathcal{X}(\ell_b^\bot/\ell_b,Q)$, or there is an invariant flat subfield not reduced to a point.

 If there is an invariant section of the field $X_b=\mathcal{X}(\ell_b^\bot/\ell_b,Q)$ then we get a map $b\mapsto V_b$ where $V_b$ is a 2-dimensional linear subspace containing $\ell_b$ and such that the signature of $Q$ on $V_b$ is $(1,0)$. We also know (by the same argument as in the proof of Theorem \ref{Shilov}) that almost surely $V_b\cap V_{b’}$ is a positive definite line which is orthogonal to $\ell_b$ and $\ell_{b’}$. If this intersection is essentially constant then we have a positive definite invariant line and we are done. 
So assume this is not the case. We can choose $V_1,V_2$ distinct in the essential image and $V_3$ that does not contain $V_1\cap V_2$. The same argument as in the proof of opposition shows that any $V_b$ in the essential image actually lies in $V_1+V_2$ and we have a contradiction, showing that it cannot happen that there is an invariant section of the field $X_b=\mathcal{X}(\ell_b^\bot/\ell_b,Q)$.

If there is an invariant flat subfield not reduced to a point in $X_b$ then it is a geodesic since $X_b$ has rank 1. This means, as before, that there is a map $b\mapsto V_b$ where $V_b$ is a 3-dimensional subspace of signature $(1,1)$ that contains $\ell_b$ (which is the kernel of the restriction of $Q$ to $V_b$). By construction of the perspectivity $\sigma_{b,b’}$, one has that almost surely $V_b\cap V_{b’}$ is a two dimensional subspace of signature $(1,1)$. If this intersection is essentially constant then we have a 2-dimensional invariant linear subspace and we are done. 

If this is not the case, then as before choose $V_1,V_2$ in the essential image of the map $b\mapsto V_b$ and $V_3$ that does not contain $V_1\cap V_2$, so $V_3\cap V_1$ and $V_3\cap V_2$ are two distinct subspaces of dimension 2. In particular, their union span $V_3$ and $V_3\leq V_1+V_2$. Now let $V_b$ be in the essential image, for the same reason as for $V_3$, either $V_b$ lies in $V_1+V_2$  or $V_b$ contains $V_1\cap V_2$, but in this last case $V_b$ meets $V_3$ in a 2-dimensional subspace that contains a line not included in $V_1\cap V_2$. So $V_b\leq (V_1\cap V_2)+V_3\leq V_1+V_2$.  Once again, we get that $V_1+V_2$ is $\Gamma$-invariant.

If there is no invariant section nor invariant flat subfield in $(X_b)$ then there is a map $\psi\colon b\mapsto\partial X_b$ which yields the desired map to $\Ii_2$.
\end{proof}
It is shown in \cite[Proposition 5.5]{MR3263898} that geometric density implies irreducibility (in the real case but the proof works over $\C$ and $\H$ as well). So we deduce straightforwardly the following.
\begin{corollary}\label{cor:bnd_map_small_rank}Let $\Gamma$ be a countable group with strong boundary $B$ and let $\rho\colon\G\to\PO_\K(p,\infty)$  be a representation. Assume that $p\leq 2$ and $\rho$  is geometrically dense then there is a $\Gamma$-map $\phi\colon B\to\Ii_p$ such that for almost every $(b,b’)\in B\times B$, $\phi(b)$ is opposite to $\phi(b')$.
\end{corollary}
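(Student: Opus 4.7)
The plan is to deduce Corollary~\ref{cor:bnd_map_small_rank} as an immediate consequence of Theorem~\ref{thm:bnd_map_small_rank} once the hypothesis on low-dimensional invariant subspaces is verified from geometric density. The only task is therefore to translate the geometric density hypothesis into the algebraic hypothesis required by the theorem.

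First, I would invoke the fact, attributed in the text to \cite[Proposition~5.5]{MR3263898}, that a geometrically dense representation into $\PO_\K(p,\infty)$ is irreducible, in the sense that there is no proper non-zero invariant closed linear subspace of $\calH$. This is stated over $\R$ in the cited reference, but as remarked in the paper the same argument works over $\C$ and $\H$. In particular, $\rho(\G)$ preserves no linear subspace of $\calH$ of positive finite dimension, a fortiori none of dimension $\leq 4$.

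Once irreducibility is established, the hypotheses of Theorem~\ref{thm:bnd_map_small_rank} are satisfied: $\Gamma$ is countable with strong boundary $B$, $p \leq 2$, and $\rho$ has no invariant linear subspace of dimension at most $4$ (indeed, of any finite dimension). Applying Theorem~\ref{thm:bnd_map_small_rank} directly yields the desired measurable $\Gamma$-equivariant map $\phi \colon B \to \Ii_p$ with the transversality (opposition) property on $B \times B$.

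There is no real obstacle here; the work has been done in Theorem~\ref{thm:bnd_map_small_rank}, where the two low-rank cases $p=1$ and $p=2$ are separately analyzed and the alternatives (existence of an invariant section versus of an invariant Euclidean subfield in the field $X_b = \calX(\ell_b^\perp/\ell_b, Q)$) are traced back to the existence of a $\Gamma$-invariant subspace of small dimension. The corollary is a packaging statement: the only subtlety is observing that geometric density is strictly stronger than the absence of invariant subspaces of dimension $\leq 4$, so no further argument is needed.
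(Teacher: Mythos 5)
Your proposal is correct and follows exactly the paper's route: the authors also deduce the corollary from Theorem~\ref{thm:bnd_map_small_rank} by citing \cite[Proposition~5.5]{MR3263898} for the implication that geometric density gives irreducibility (hence no invariant subspace of dimension at most $4$). Nothing further is needed.
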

\begin{remark} 
In general, irreducibility of a the representation $\G\to\PO_\K(p,\infty)$ implies non-elementarity and it is shown in \cite[Proposition 5.5]{MR3263898} that geometric density implies irreducibility. The converse of the latter implication does not hold since the embedding of $\OC(1,\infty)$ in $\OR(2,\infty)$ (given by considering the underlying real Hilbert space $\calH_\R$ and the real part of the Hermitian form) is irreducible but not geometrically dense since a copy of $\calX_\C(1,\infty)$ embeds equivariantly in $\calX_\R(2,\infty)$.

We do not expect that Theorem~\ref{thm:bnd_map_small_rank} holds for $p\geq3$ but it is likely that Corollary~\ref{cor:bnd_map_small_rank} holds for $p\geq3$.
\end{remark}
%%%%%%%%%%%%%%%%%%%%%%%
\section{Bounded cohomology and the bounded K\"ahler class}\label{sec:bndbasics}
%%%%%%%%%%%%%%%%%%%

In this section, we will recall the definitions of maximal representations, as well as adapt them in order to deal with infinite-dimensional symmetric spaces. Some familiarity with the basics on K\"ahler classes and bounded cohomology in finite dimension is advisable; the interested reader can consult for example \cite{MR2655315}.

\subsection{Bounded cohomology}
We recall here the notions from the theory of bounded cohomology that we will need in the paper. We refer the reader to \cite{MR1840942} for a thorough treatment.

The \emph{bounded cohomology} $\Hb^n(G,\R)$ of a group $G$ is the cohomology of the complex 
$$\Cb^n(G,\R)^G=\left\{f:G^{n+1}\to \R| f \text{ is  $G$-invariant, }\sup_{(g_0,\ldots,g_n)\in G^{n+1}}|f(g_0,\ldots,g_n)|<\infty\right\}$$
whose coboundary operator is defined by the formula
$$df(g_0,\ldots,g_{n+1})=\sum_{i=0}^{n+1}(-1)^i f(g_0,\ldots,\hat g_i,\ldots, g_{n+1}).$$
The bounded cohomology of discrete groups was first introduced by Gromov \cite{Gromov}, and proved to be a useful tool in proving rigidity results, in particular since it allows to detect properties of boundary maps. We will also exploit this feature in Proposition \ref{prop:chainpres} below.

Despite bounded cohomology is, in general, a much wilder theory then ordinary group cohomology (e.g. the third bounded cohomology of a free group is infinite dimensional), it admits a natural homomorphism
$$c:\Hb^n(G,\R)\to \HH^n(G,\R),$$
the \emph{comparison map}, induced by the inclusion of bounded cochains in ordinary cochains.

A second important advantage of bounded cohomology over ordinary group cohomology that will play a crucial role also in our work is that the $\ell^\infty$-norm on bounded cochain $\Cb^n(G,\R)$ induces a seminorm, the \emph{Gromov norm}, in bounded cohomology:
$$\|\kappa\|_\infty:=\inf_{[f]=\kappa}\sup_{(g_0,\ldots,g_n)\in G^n}|f(g_0,\ldots,g_n)|.$$

When $G$ is a locally compact group, Burger and Monod \cite{BM} defined the \emph{continuous bounded cohomology} $\Hcb^n(G,\R)$ of $G$  and showed that, in degree 2, the comparison map $c:\Hcb^2(G,\R)\to \Hc^2(G,\R)$ is an isomorphism when $G$ is a semisimple Lie with finite center. Here $\Hc^2(G,\R)$ denotes the continuous cohomology of $G$ (a standard text about continuous cohomology is \cite{MR1721403}). The result of Burger-Monod allows to give a complete description of $\Hcb^2(G,\R)$ in case of semisimple Lie groups with finite center: the continuous cohomology $\Hc^2(G,\R)$  can be identified with the vector space of $G$-invariant differential fom $\Omega^2(\mathcal X,\R)^G$ where $\calX$ is the symmetric space associated to $G$. In particular, for a simple Lie group  $G$  of non-compact type and finite center, 
the second continuous cohomology  $\Hcb^2(G,\R)$ is equal to $\R\kappa^{cb}_G$ if $\mathcal X$ is a  Hermitian symmetric space (and $\kappa^{cb}_G$ is then the bounded K\"ahler class, see below), and vanishes otherwise.   In general $\Hcb^2(G,\R)$ is generated by the bounded K\"ahler classes of the Hermitian factors of $\mathcal X$.% Of course, when $G$ is a discrete group, the continuity assumption is void, and so the continuous (bounded) cohomology $\Hccb^2(G,\R)$ agrees with the absolute (bounded) cohomology $\Hbb^2(G,\R)$, in general not much is known about the natural maps $\Hccb^2(G,\R)\to\Hbb^2(G,\R)$ induced by the inclusion of continuous (bounded) cochains in absolute (bounded) cochains, nor about the absolute bounded cohomology of Lie groups regarded as discrete  groups.

%%%%%%%%%%%%%%%%%
\subsection{The bounded K\"ahler class}
%%%%%%%%%%%%%%%%% 
We now turn our attention to the bounded cohomology of the groups $G$ of isometries of the infinite dimensional Hermitian symmetric spaces $\calX$ introduced in Section \ref{sec:sym}. Since such groups $G$ are not locally compact, there is no well-established theory of continuous bounded cohomology, therefore we will just work with the bounded cohomology $\Hb^2(G,\R)$.  If $\calX$ has finite rank, we can use the K\"ahler form to define a class in the bounded cohomology of $G$, precisely as in the finite dimensional case: 
\begin{defn}
The \emph{bounded K\"ahler class} of the groups $G=\PO_\C(p,\infty)$ and $G=\PO_\R^+(2,\infty)$ is the class 
$\kappa^b_G\in \Hb^2(G,\R)$ defined by the cocycle
$$C_\omega^x(g_0,g_1,g_2)=\frac{1}{\pi}\int_{\Delta(g_0x,g_1x,g_2x)}\omega$$ 
where $x$ is any base point in the corresponding symmetric space $\calX$, $\Delta(g_0x,g_1x,g_2x)$ is the geodesic triangle with vertices $(g_0x,g_1x,g_2x)$ and $\omega$ is the Kähler form normalized such that the minimum of the holomorphic sectional curvature is -1. 
\end{defn}
The fact that $\kappa^b_G$ is independent on $x$ is proved below.
%Observe that depending on the orientation of  $\Delta(g_0x,g_1x,g_2x)$, the value $C_\omega^x(g_0,g_1,g_2)$ can be either positive or negative, and in particular the cocycle $C_\omega^x$ is alternating.
\begin{remark}\label{rk:5.2}
Let $i:H=\SU(p,q)\to G=\PO_\C(p,\infty)$ be a standard embedding. As before, we denote by $\kappa^{cb}_H\in \Hcb^2(H,\R)$  the generator corresponding, under the natural isomorphism, to the bounded K\"ahler class, and we denote by $\kappa^{b}_H\in \Hb^2(H,\R)$ the image of $\kappa^{cb}_H$ under the map induced by the inclusion of continuous bounded cochains in bounded cochains. It follows from the definition that $i^*{\kappa^b_G}=\kappa^b_H$. 
\end{remark}
\begin{lem}\label{lem:5.3}
The class $\kappa^b_G$ is well defined. Furthermore
$$\|\kappa^b_G\|_\infty=\rk(\Xx)$$
where $\Xx$ is the symmetric space associated to $G$. In particular $\kappa^b_G$ is not zero.
\end{lem}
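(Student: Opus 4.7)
The plan is to verify in sequence that the cochain $C^x_\omega$ is bounded, satisfies the cocycle identity, and that its class is independent of the basepoint $x$; then to compute the Gromov norm. The crucial observation throughout is that, by Proposition~\ref{prop:finite_config}, for fixed $x, g_0, \ldots, g_3 \in G$ there exists a finite dimensional totally geodesic subspace $\calY \subset \calX$ isometric to a standard $\calX_\K(p,q)$ (Hermitian symmetric, of rank $\rk(\calX)$) containing all the orbit points $x, g_0 x, \ldots, g_3 x$. The restriction of $\omega$ to $\calY$ is the correctly normalized K\"ahler form of $\calY$, so every local computation reduces to finite dimension.

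First I would handle boundedness and closedness. The classical finite dimensional Domic--Toledo inequality gives $|C^x_\omega(g_0, g_1, g_2)| \leq \rk(\calY) = \rk(\calX)$, yielding simultaneously that $C^x_\omega \in \Cb^2(G,\R)^G$ and the upper estimate $\|\kappa^b_G\|_\infty \leq \rk(\calX)$. Since $d\omega = 0$, Stokes' theorem applied to the geodesic $3$-simplex $[g_0 x, g_1 x, g_2 x, g_3 x]$ inside $\calY$ gives $dC^x_\omega = 0$.

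For independence of $x$, the plan is to exhibit an explicit bounded primitive. Set
$$h^{x,x'}(g_0, g_1) = \frac{1}{\pi}\int_{Q(g_0, g_1)}\omega,$$
where $Q(g_0, g_1)$ is the ``quadrilateral'' $2$-chain with vertices $g_0 x, g_1 x, g_1 x', g_0 x'$, written as the sum of two geodesic triangles sharing the diagonal from $g_1 x$ to $g_0 x'$. A direct boundary computation shows that the $2$-chain $Q(g_1, g_2) - Q(g_0, g_2) + Q(g_0, g_1)$ and $\Delta(g_0 x, g_1 x, g_2 x) - \Delta(g_0 x', g_1 x', g_2 x')$ share the same $1$-boundary. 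Because $\calX$ is a Hadamard manifold and $\omega$ is closed, both $2$-chains have the same $\omega$-integral, so $dh^{x,x'} = C^x_\omega - C^{x'}_\omega$. Boundedness of $h^{x,x'}$ follows once more from Domic--Toledo applied inside a finite dimensional standard subspace containing the four vertices of each $Q(g_0, g_1)$.

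Finally, for the lower bound on the Gromov norm, I would fix a standard embedding $i\colon H \hookrightarrow G$ with $H = \SU(p,p)$ in the complex case (respectively, the analogous embedding of the isometry group of $\calX_\R(2,2)$ in the case $G = \PO_\R^+(2,\infty)$). The associated symmetric space has rank $\rk(\calX)$, and by Remark~\ref{rk:5.2} (and its obvious real analogue) we have $i^*\kappa^b_G = \kappa^b_H$. Since pullback in bounded cohomology is norm non-increasing,
$$\|\kappa^b_G\|_\infty \geq \|\kappa^b_H\|_\infty = \rk(\calX),$$
the last equality being the classical finite dimensional computation of Domic--Toledo and Clerc--\O rsted, realised by sequences of geodesic triangles whose vertices tend to the Shilov boundary. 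The main obstacle is conceptual rather than technical: infinite dimension a priori deprives us of the usual differential-geometric apparatus for $2$- and $3$-chains, but Proposition~\ref{prop:finite_config} bypasses this issue by placing every finite orbit configuration inside a finite dimensional totally geodesic standard subspace on which the classical arguments apply verbatim.
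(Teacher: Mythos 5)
Your treatment of boundedness, the cocycle identity, and basepoint independence coincides with the paper's: the paper also reduces every finite configuration of orbit points to a standard finite dimensional totally geodesic subspace (a copy of $\Xx_\C(p,2p)$, resp.\ $\Xx_\C(p,3p)$), invokes the sharp Domic--Toledo bound there, and exhibits the same quadrilateral primitive $f^{x,y}_\omega$ (split along the other diagonal, which is immaterial since $\omega$ is closed and the prism argument takes place in finite dimension).

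The genuine gap is in the last step, the lower bound $\|\kappa^b_G\|_\infty\geq\rk(\Xx)$. You reduce correctly to $\|\kappa^b_H\|_\infty$ for $H=\SU(p,p)$ via a standard embedding and Remark~\ref{rk:5.2}, but then assert that $\|\kappa^b_H\|_\infty=\rk(\Xx)$ follows from the Domic--Toledo/Clerc--{\O}rsted computation, ``realised by sequences of geodesic triangles whose vertices tend to the Shilov boundary.'' This does not prove a lower bound on a Gromov norm: exhibiting one representative cocycle whose supremum is $\rk(\Xx)$ only bounds the \emph{infimum} over representatives from above, and moreover the class $\kappa^b_H$ you need lives in $\Hb^2(H,\R)$ (with $H$ regarded as an abstract group), where the comparison with the continuous bounded class is a priori norm non-increasing, so even the known equality $\|\kappa^{cb}_H\|_\infty=\rk(\Xx)$ in $\Hcb^2(H,\R)$ would not suffice as stated. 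The paper closes this by pulling back one step further: it composes with the diagonal (tight) embedding $\G\leq\SU(1,1)\to\SU(p,p)$ of a surface group into a maximal polydisk and quotes \cite[Example 3.9]{BILW}, which computes $\|i^*\kappa^b_G\|_\infty=\rk(\Xx)$ for that maximal representation of the discrete group $\G$; norm non-increase of pullback then forces $\|\kappa^b_G\|_\infty\geq\rk(\Xx)$. Your argument needs this (or an equivalent lower-bound mechanism, such as evaluation against a relative fundamental class) to be complete.
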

\begin{proof}
The cocycle $C_\omega^x$ has norm bounded by $\rk(\Xx)$ since the three points $(g_0x,g_1x,g_2x)$ lie on some isometrically embedded totally geodesic copy of $\Xx_\C(p,2p)$ (resp. $\Xx_\R(2,4)$) and therefore the sharp bound of the integral computed in  \cite{DT} applies.  Furthermore the class $\kappa^b_G$ doesn't depend on the choice of the base point $x$ since for any other point $y$ the difference   $C_\omega^x-C_\omega^y$ is the coboundary of the function
$$f_\omega^{x,y}(g_0,g_1)=\frac{1}{\pi}\int_{\Delta(g_0x,g_1x,g_1y)}\omega+\frac{1}{\pi}\int_{\Delta(g_0x,g_1y,g_0y)}\omega$$
which, again, is bounded since the four points $(g_0x,g_1x,g_0y,g_1y)$ lie on some isometrically embedded totally geodesic copy of $\Xx_\C(p,3p)$ (resp. $\Xx_\R(2,6)$). Observe that, for any triple $(g_0,g_1,g_2)$ the value $C_\omega^y(g_0,g_1,g_2)-C_\omega^x(g_0,g_1,g_2)+df_\omega^{x,y}(g_0,g_1,g_2)$ is the integral of the closed form $\omega$ on a triangulation of the triangular prism with bottom face $\Delta(g_0x,g_1x,g_2x)$ and upper face $ \Delta(g_0y,g_1y,g_2y).$ This is a closed polyhedral surface contained in a finite dimensional subspace,  therefore the integral of $\omega$ over it vanishes.

In order to conclude the proof we therefore only need to show that $\|\kappa^b_G\|_\infty\geq\rk(\Xx)$. For this purpose let $\Gamma$ denote the fundamental group of a surface and let us consider the homomorphism $i:\G\to \SU(p,p)\to \PO_\C(p,\infty)$ (resp. $i:\G\to \SO^+(2,2)\to \PO_\R^+(2,\infty)$), in which the inclusion $\G\leq\SU(1,1)\to \SU(p,p)$ (resp. $\G\to \SO(2,2)$) is such that the diagonal inclusion of the Poincar\'e disk in a maximal polydisk is equivariant. It  follows from  \cite[Example 3.9]{BILW} together with Remark \ref{rk:5.2} that  $\|i^*\kappa^b_G\|_\infty=\rk(\Xx)$. Since the pullback in bounded cohomology is clearly norm non-increasing, the result follows.
\end{proof}

%%%%%%%%%%%%
\subsection{The Bergmann cocycle}\label{sec:bergmann}
%%%%%%%%%%%%
In the study of rigidity properties of maximal representations it will be useful to have a different representative of the bounded K\"ahler class. Such a representative will depend only on the action on a suitable boundary of the symmetric space. We distinguish two cases.

When dealing with the groups $\PO_\C(p,\infty)$, the new representative will depend on the choice of a point  $V\in\Ii_p$ the set of maximal isotropic subspaces of $\Hh(p,\infty)$. Recall that every triple $(V_0,V_1,V_2)\in (\Ii_p)^3$ is contained in a finite dimensional subspace (of dimension at most $3p$). This implies that the \emph{Bergmann cocycle}  studied in \cite{MR1923417,Clerc}\footnote{
In \cite{MR1923417,Clerc}, this cocycle is referred to as  \emph{generalized Maslov index}. We chose to denote this cocycle \emph{Bergmann cocycle}, following \cite[\S 3.2]{BIWtight}} for $\SU(p,3p)$ extends to a strict $\PO_\C(p,\infty)$-invariant cocycle
$$\beta_\C:\Ii_p^3\to [-\rk(\Xx),\rk(\Xx)]$$  
with the property that if $|\beta_{\C}(V_0,V_1,V_2)|=\rk(\Xx)$ then  $V_0,V_1,V_2$ are contained in a $2p$ dimensional subspace of signature $(p,p)$ and are pairwise transverse.

While we will not recall the explicit definition of the Bergmann cocycle (we refer to the aforementioned papers), we record its most important property:

\begin{lem}\label{lem:5.4}
For every $V\in \Ii_p$, the cocycle $C_\beta^V$ defined by
$$C_\beta^V(g_0,g_1,g_2)=\beta_{\C}(g_0V,g_1V,g_2V)$$
represents the bounded K\"ahler class. 
\end{lem}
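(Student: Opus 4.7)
The plan is to exhibit an explicit bounded $G$-invariant primitive $f\in\Cb^1(G,\R)^G$ with $df=C_\omega^x-C_\beta^V$, reducing every computation to a finite-dimensional totally geodesic subspace by systematic use of Proposition~\ref{prop:finite_config}.

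First I would verify that $C_\beta^V$ is a well-defined bounded $G$-invariant alternating $2$-cocycle. Invariance and the alternating property follow immediately from the corresponding properties of $\beta_\C$, and boundedness is built into its construction via $|\beta_\C|\leq\rk(\Xx)$. For the cocycle identity applied to a four-tuple $(g_0,g_1,g_2,g_3)$, Proposition~\ref{prop:finite_config} produces a finite-dimensional totally geodesic subspace $Y\subset\Xx$ whose closure contains all four points $g_iV\in\Ii_p$; inside $Y$ (a standard copy of $\Xx_\C(p,q)$ for some finite $q$), the identity becomes the known cocycle identity for the Bergmann cocycle on $\SU(p,q)$ from \cite{Clerc,MR1923417}.

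The candidate primitive is
$$f^{x,V}(g_0,g_1)\;=\;\frac{1}{\pi}\int_{\Delta(g_0x,g_1x,g_1V)}\omega+\frac{1}{\pi}\int_{\Delta(g_0x,g_1V,g_0V)}\omega,$$
where each $\Delta(\cdot,\cdot,V)$ denotes a straight ideal geodesic triangle with one vertex on the Shilov boundary. By Proposition~\ref{prop:finite_config}, the four points $\{g_0x,g_1x,g_0V,g_1V\}$ lie in a single finite-dimensional totally geodesic subspace $Y'$, inside which both integrals make sense and are bounded uniformly in $(g_0,g_1)$ by the Domic-Toledo estimate \cite{DT} (applied to ideal triangles in the finite-dimensional Hermitian symmetric space $Y'$). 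Since $\omega$ is $G$-invariant and its restrictions agree on intersections of finite-dimensional totally geodesic subspaces, the definition is independent of the choice of $Y'$, giving $f^{x,V}\in\Cb^1(G,\R)^G$.

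The core computation is the identity $df^{x,V}(g_0,g_1,g_2)=C_\omega^x(g_0,g_1,g_2)-C_\beta^V(g_0,g_1,g_2)$. For a fixed triple $(g_0,g_1,g_2)$, Proposition~\ref{prop:finite_config} again places the six points $\{g_ix,g_iV\mid i=0,1,2\}$ in a single finite-dimensional totally geodesic subspace $Z\cong\Xx_\C(p,q')$, reducing everything to finite dimension. Inside $Z$ the quantity $df^{x,V}-C_\omega^x+C_\beta^V$ equals $1/\pi$ times the integral of the closed $2$-form $\omega$ over a closed piecewise smooth polyhedral surface, namely the triangular prism with bottom face $\Delta(g_0x,g_1x,g_2x)$, ideal upper face $\Delta(g_0V,g_1V,g_2V)$ (whose $\omega$-area is $\pi\,C_\beta^V$ by the integral representation of the finite-dimensional Bergmann cocycle), and three lateral quadrilateral sides each triangulated into the two ideal triangles appearing in $f^{x,V}(g_i,g_{i+1})$. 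Stokes' theorem in $Z$ then forces this integral to vanish, yielding the desired coboundary identity and, combined with Lemma~\ref{lem:5.3}, the conclusion $[C_\beta^V]=\kappa_G^b$.

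The main technical obstacle is making rigorous sense of the ``prism with ideal upper face'' and verifying that Stokes applies: concretely, that the ideal triangles $\Delta(a,b,V)$ have finite $\omega$-area and that the limit of the prism integrals, as the upper face is pushed from interior triangles $\Delta(g_iy,g_{i+1}y,g_{i+2}y)$ out to the Shilov boundary $V$, produces the value $\pi\,C_\beta^V$ with no defect. Both of these are classical in the finite-dimensional Hermitian setting (see \cite{Clerc,MR1923417}), and in our infinite-dimensional setting they transfer verbatim through the finite-dimensional reduction $Z$.
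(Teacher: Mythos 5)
Your proposal is correct and follows essentially the same route as the paper: the paper's (much terser) proof likewise reduces to a finite-dimensional subspace containing the finitely many isotropic subspaces involved, invokes \cite[Theorem 5.3]{Clerc} for the finite-dimensional Bergmann cocycle, and exhibits the difference $C_\omega^x-C_\beta^V$ as the coboundary of exactly the primitive you write down (``defined similarly to $f_\omega^{x,y}$ in Lemma~\ref{lem:5.3}, but integrating on simplices with some ideal vertices''). Your write-up simply makes explicit the prism/Stokes computation and the technical points about ideal faces that the paper delegates to the finite-dimensional references.
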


\begin{proof}
Since any 4-tuple $(V_0,V_1,V_2,V_3)\in\Ii_p^4$ is contained in a finite dimensional subspace of $\Hh(p,\infty)$, it follows from \cite[Theorem 5.3]{Clerc} that the cocycle $C_\beta^V$ is a strict alternating bounded cocycle, cohomologous to $C_\omega^x$: the difference of the cocycles is the coboundary of a function defined similarly to the function $f_{x,y}$ in the proof of Lemma \ref{lem:5.3}, but integrating on simplices with some ideal vertices.
\end{proof}

\begin{remark}
It is worth remarking that, if $G$ is a (finite dimensional) Hermitian Lie group, the cocycles $C_\omega^x$ and $C_\beta^V$ also define a class  $\kappa^{cb}_G$ in the continuous bounded cohomology $\Hcb^2(G,\R)$. This class generates the continuous bounded cohomology $\Hcb^2(G,\R)$ for simple groups of Hermitian type.
\end{remark}

In the case of the group $\PO_\R^+(2,\infty)$, the same construction works except that the  boundary of $\calX_\R(2,n)$ on which the Bergmann cocycle is defined is $\cal{I}_1(2,n)$ and not $\cal{I}_2(2,n)$. Thus, the Bergmann cocycle for $\PO_\R^+(2,\infty)$ is a map $\beta_\R:\Ii_1(2,\infty)^3\to\{-2,0,2\}$. The fact that, in this case, the Bergmann cocycle only assumes a discrete set of possible values reflects the fact that $\OR(2,p)$ is of tube type. It is worth remarking that, in this case, the Bergmann cocycle is only preserved by the connected component of the identity in $\OR(2,\infty)$, denoted by $\OR^+(2,\infty)$.

It is possible to give an explicit description (based upon \cite[\S6]{MR2077243}) of the value of the Bergmann cocycle for triples of pairwise opposite points in $\Ii_1(2,\infty)$. For this, we need to choose  representatives $\ov x,\ov y,\ov z$  of the classes $x,y,z$ such that $Q(\ov x,\ov z)<0$ and $Q(\ov x,\ov y)<0$; furthermore, given an isotropic vector $\ov x$ in $\calH$, we denote by $[\ov x]$ the vector in $\R^2=\langle e_1,e_2\rangle$ which corresponds to the orthogonal projection (with respect to $Q$) of $[\ov x]$ and we endow $\R^2=\langle e_1,e_2\rangle$ with its canonical orientation, which allows us to determine if a triple of pairwise distinct non-zero vectors is positively or negatively oriented.  We then define (here $\rm or$ denotes the orientation):
 $$\left\{\begin{array}{lll}
 \beta_\R(x,y,z)=0 &\text{if}&Q|_{\langle x,y,z\rangle} \text{ has signature } (1,2)\\
 \beta_\R(x,y,z)=2 &\text{if}&Q|_{\langle x,y,z\rangle} \text{ has signature}  (2,1), \text{ and } \bor ([\ov x],[\ov y],[\ov z])=+\\
  \beta_\R(x,y,z)=-2 &\text{if}&Q|_{\langle x,y,z\rangle} \text{ has signature } (2,1), \text{ and } \bor ([\ov x],[\ov y],[\ov z])=-\\
 \end{array}\right.$$
 One checks that the value of $\beta_\R$ doesn't depend on the choices involved and $\beta_\R$  coincides with the Bergmann cocycle.

In order to unify the notation we will denote, from now on, by $\Ss_G$ the spaces $\Ss_{\OR(2,\infty)}:=\Ii_1(2,\infty)$ and $\Ss_{\OC(p,\infty)}:=\Ii_p(p,\infty)$. Similarly, when this will not seem to generate confusion, we will simply use the letter $\beta$ for the cocycles that we denoted before $\beta_\R$ (resp. $\beta_\C$).
%%%%%%%%%%%%%
\subsection{Maximal representations}
%%%%%%%%%%%%%%%%
Let $\Gamma\leq \SU(1,n)$ be a lattice. %The difference between $n=1$, in which $\Gamma$ is the fundamental group of a Riemann surface, and $n>1$ will not play an important role in this subsection.
We denote by 
$$T_b^*:\Hb^2(\Gamma,\R)\to \Hcb^2(\SU(1,n),\R)$$
the transfer map, as defined in \cite[\S 2.7.2]{MR2655315}: this is a left inverse of the restriction map $i^*:\Hcb^2(\SU(1,n),\R)\to\Hb^2(\G,\R)$ that has norm one. %\footnote{BD: It's maybe better to recall the construction of the transfer map. The construction of this map in Burger-Monod is more cumbersome since they deal with non-trivial coefficients. In our situation, it's much simpler. If we don't want to recall the full construction, we can also refer to Burger-Iozzi §2.1. BP: I added what is needed. Recalling the actual definition would require introducing another resolution for the cohomology of $\G$. I am not sure if it is relevant, since we won't really need it} 
Recall that $ \Hcb^2(\SU(1,n),\R)\cong \R$ and is generated by the bounded K\"ahler class of the group $\SU(1,n)$ \cite[Lemma 6.1]{BM}.
In this section, we will denote the bounded K\"ahler class of the group  $\SU(1,n)$ by $\kappa^{cb}_n$, in order to avoid confusion with the other K\"ahler classes, and simplify the notation. 
\begin{definition}
Let $G\in\{\PO_\R(2,\infty),\PO_\C(p,\infty)\}$ and let $\rho:\G\to G$ be an homomorphism. The \emph{Toledo invariant} of the representation $\rho$ is the number $i_\rho$ such that 
\begin{equation}\label{eq:transfer}T_b^*\rho^*\kappa^b_G =i_\rho \kappa_n^{cb}\end{equation}
\end{definition}
%If $\rho:\G\to \OC(p,\infty)$ is an homomorphism, the composition of the pull back in bounded cohomology and the transfer map  satisfies
%\begin{equation}\label{eq:transfer}T_b^*\rho^*\kappa^b_G =i_\rho \kappa_n^{cb}\end{equation}
%for some $i_\rho\in[-p,p]$.
Observe that the absolute value $|i_\rho|$ of the Toledo number  is bounded by $\rk(G)$ since both the transfer map and the pull-back are norm non-increasing. This inequality is often referred to as \emph{generalized Milnor-Wood inequality}. 
% where $\kappa_0$ is the cohomology class coming from the action of $\SU(1,n)$ on $\Xx_\C(1,n)$.
%\footnote{Shall we emphasize this fact in a Lemma with a reference to Milnor-Wood inequality ? --BD.}
In analogy with \cite{MR2655315} we say:
\begin{definition}\label{def:max}The representation $\rho$ is \emph{maximal} if $|i_\rho|=p$.
\end{definition}

As in the finite dimensional case, it follows from the definition that the restriction of a maximal representation to a finite index subgroup is also maximal:
\begin{lemma}\label{lem:res}
The restriction of a maximal representation $\rho:\G\to G$ to a finite index subgroup $\Lambda<\Gamma$ is maximal.
\end{lemma}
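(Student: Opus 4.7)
The plan is to deduce the lemma from the naturality of the transfer map under finite index inclusion. Concretely, I want to show that if $\Lambda<\Gamma$ has finite index, then the diagram
\[
\xymatrix{
\Hb^2(\Gamma,\R)\ar[r]^{i^*_{\Lambda,\Gamma}}\ar[dr]_{T_{b,\Gamma}^*}&\Hb^2(\Lambda,\R)\ar[d]^{T_{b,\Lambda}^*}\\
&\Hcb^2(\SU(1,n),\R)
}
\]
commutes, where $i^*_{\Lambda,\Gamma}$ is restriction of cochains and $T_{b,\Lambda}^*$, $T_{b,\Gamma}^*$ are the transfer maps for the two lattices. Once this is established the lemma will follow immediately: setting $\rho_\Lambda=\rho|_\Lambda$ we have $\rho_\Lambda^*\kappa^b_G=i^*_{\Lambda,\Gamma}\rho^*\kappa^b_G$, so
\[
i_{\rho_\Lambda}\kappa_n^{cb}=T_{b,\Lambda}^*\rho_\Lambda^*\kappa^b_G=T_{b,\Lambda}^*i^*_{\Lambda,\Gamma}\rho^*\kappa^b_G=T_{b,\Gamma}^*\rho^*\kappa^b_G=i_\rho\kappa_n^{cb},
\]
hence $i_{\rho_\Lambda}=i_\rho$, and maximality of $\rho$ forces maximality of $\rho_\Lambda$.

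To establish the commutativity I would use the standard model for the transfer in which a class in $\Hb^2(\Gamma,\R)$ is represented by a bounded Borel cocycle $\alpha\colon \SU(1,n)^3\to\R$ that is $\Gamma$-invariant under the diagonal action, with
\[
T_{b,\Gamma}^*[\alpha](g_0,g_1,g_2)=\int_{\Gamma\backslash \SU(1,n)}\alpha(hg_0,hg_1,hg_2)\,d\mu_\Gamma(\Gamma h),
\]
and analogously for $\Lambda$ with the invariant probability measure $\mu_\Lambda$ on $\Lambda\backslash \SU(1,n)$. Restriction $i^*_{\Lambda,\Gamma}$ is implemented on cochains simply by forgetting that the $\Gamma$-invariant cocycle $\alpha$ is more than just $\Lambda$-invariant. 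Since the natural covering $p\colon\Lambda\backslash \SU(1,n)\to\Gamma\backslash \SU(1,n)$ is a $[\Gamma:\Lambda]$-sheeted covering and $p_*\mu_\Lambda$ is an $\SU(1,n)$-invariant probability measure on $\Gamma\backslash \SU(1,n)$, it must equal $\mu_\Gamma$ by uniqueness. For a $\Gamma$-invariant cocycle $\alpha$ the function $h\mapsto \alpha(hg_0,hg_1,hg_2)$ descends to $\Gamma\backslash \SU(1,n)$, so integrating its lift against $\mu_\Lambda$ on $\Lambda\backslash \SU(1,n)$ returns the integral against $\mu_\Gamma$ on $\Gamma\backslash \SU(1,n)$. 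This yields $T_{b,\Lambda}^*\circ i^*_{\Lambda,\Gamma}=T_{b,\Gamma}^*$.

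There is no real obstacle here: the content is just the measure-theoretic observation about the covering $\Lambda\backslash \SU(1,n)\to\Gamma\backslash \SU(1,n)$ together with the explicit averaging formula for the transfer map, both of which are standard (cf.\ \cite{MR2655315}). If one prefers to avoid choosing a particular resolution, an alternative is to characterize $T_{b,\Gamma}^*$ and $T_{b,\Lambda}^*$ abstractly as the unique norm-one left inverses of the respective restriction maps from $\Hcb^2(\SU(1,n),\R)$; since $\Hcb^2(\SU(1,n),\R)$ is one-dimensional, generated by $\kappa_n^{cb}$, and $T_{b,\Lambda}^*\circ i^*_{\Lambda,\Gamma}$ is a norm-non-increasing map sending $\kappa_n^{cb}$ to $\kappa_n^{cb}$, it must coincide with $T_{b,\Gamma}^*$, which gives the same conclusion.
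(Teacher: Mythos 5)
Your proof is correct and follows essentially the same route as the paper: the whole content is the identity $T_{b,\Gamma}^*=T_{b,\Lambda}^*\circ i^*_{\Lambda,\Gamma}$, which the paper simply asserts (citing that $i^*_{\Lambda,\Gamma}$ is the isometric injection of Monod's Proposition 8.6.2) and which you additionally verify via the averaging formula and the covering $\Lambda\backslash\SU(1,n)\to\Gamma\backslash\SU(1,n)$. No issues.
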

\begin{proof}
Indeed denoting by $T_{b,\Lambda}^*$ (resp. $T_{b,\Gamma}^*$) the transfer map, and by $\iota^*:\Hb^2(\Gamma,\R)\to \Hb^2(\Lambda,\R)$ the isometric injection induced in bounded cohomology by the inclusion $\iota:\Lambda\to\Gamma$  (\cite[Proposition 8.6.2]{MR1840942}), one gets $T_{b,\Gamma}^*=T_{b,\Lambda}^*\iota^*$.
\end{proof}

Also the following fact descends directly from the definition, but is very useful in understanding geometric properties of maximal representations:
\begin{prop}\label{prop:nofix}
Let $\rho:\Gamma\to\PO_\C(p,\infty)$ be a maximal representation. Then there is no fixed point at infinity for $\rho$.
\end{prop}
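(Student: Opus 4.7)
The plan is to argue by contradiction. Assume that some $\xi\in\partial\calX_\C(p,\infty)$ is fixed by $\rho(\Gamma)$. The first step is to extract from $\xi$ a non-zero $\rho(\Gamma)$-invariant totally isotropic subspace $V\subset\calH$. To that end, recall that $\xi$ lies in the interior of a unique cell $\sigma$ of the spherical building structure on $\partial\calX_\C(p,\infty)$, and that $\sigma$ corresponds to a flag of non-zero totally isotropic subspaces $V_{i_1}\subsetneq\cdots\subsetneq V_{i_r}\subset\calH$ with $i_1<\cdots<i_r\leq p$. Since $\rho(\Gamma)$ preserves $\sigma$, the $\OOO_\C(p,\infty)$-action on the building is type-preserving, and the vertices of $\sigma$ have pairwise distinct types, each $V_{i_j}$ is individually $\rho(\Gamma)$-invariant. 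Then $V:=V_{i_1}$ has some dimension $k$ with $1\leq k\leq p$, and both $V$ and $V^\bot$ are $\rho(\Gamma)$-invariant.

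Next, I will exploit the decomposition property of the Bergmann cocycle. Choose any $V_0\in\Ii_p$ with $V\subset V_0\subset V^\bot$, which is possible by Witt's theorem applied to the non-degenerate Hermitian form of signature $(p-k,\infty)$ induced by $Q$ on the quotient $V^\bot/V$. For every $\gamma\in\Gamma$, the subspace $\rho(\gamma)V_0$ lies in $\mathcal{J}:=\{W\in\Ii_p : V\subset W\subset V^\bot\}$, which the projection $\pi\colon V^\bot\to V^\bot/V$ identifies with $\Ii_{p-k}(V^\bot/V)$. The key input is the decomposition property of the generalized Maslov index \cite{MR1923417,Clerc}: for every $W_0,W_1,W_2\in\mathcal{J}$,
$$\beta_\C(W_0,W_1,W_2)=\beta_{p-k}(\pi W_0,\pi W_1,\pi W_2),$$
where $\beta_{p-k}$ is the Bergmann cocycle on $V^\bot/V$. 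This identity reduces to its finite dimensional version, since any such triple sits inside a common standard finite dimensional subspace.

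Representing $\kappa^b_G$ by the cocycle $C_\beta^{V_0}$ from Lemma~\ref{lem:5.4} and using that $|\beta_{p-k}|\leq p-k$, one finds that $\rho^*\kappa^b_G$ has Gromov seminorm at most $p-k$. Combined with the norm non-increasing property of $T_b^*$ and with $\|\kappa^{cb}_n\|_\infty=1$, the defining equation \eqref{eq:transfer} then forces $|i_\rho|\leq p-k\leq p-1$, contradicting the maximality $|i_\rho|=p$. The only subtle point is the decomposition formula for the Bergmann cocycle, which is a standard property of the generalized Maslov index and transfers to the infinite dimensional setting by finite dimensional reduction; everything else is bookkeeping.
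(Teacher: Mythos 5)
Your argument is correct and follows essentially the same route as the paper: extract a $\rho(\Gamma)$-invariant isotropic subspace $V$ from the fixed point at infinity, represent $\rho^*\kappa^b_G$ by the Bergmann cocycle based at a maximal isotropic $V_0\supset V$, and derive a contradiction with maximality from the fact that the orbit of $V_0$ consists of pairwise non-transverse subspaces. Your appeal to the decomposition property of the generalized Maslov index to get the uniform bound $|\beta_\C|\leq p-k$ on the orbit is a welcome refinement: the paper's one-line justification (``maximal triples are pairwise transverse'') only gives pointwise non-maximality, and since $\beta_\C$ takes a continuum of values one does need such a uniform bound to conclude that the sup-norm of the representative is strictly below $p$.
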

\begin{proof}
Assume by contradiction that $\rho(\Gamma)$ fixes an isotropic subspace $V$, and choose any maximal isotropic subspace $V'$ containing $V$.
The cocycle $C_\beta^V\circ \rho$ represents the class $\rho^*\kappa_G^b$ and, since maximal triples consist of pairwise transverse subspaces, has norm  strictly smaller than $p$, thus leading to a contradiction.  
\end{proof}

We conclude this subsection observing that, as in the finite dimensional case, the pullback in bounded cohomology can be realized via boundary maps:

\begin{prop}\label{prop:chainpres}
Let $H=\PO_\C(p,\infty)$ and $\rho:\Gamma\to H$ be a maximal representation. If there exists a measurable $\rho$-equivariant boundary map $\phi:\Ii_1(1,n)\to \Ss_H$, then for every triple of pairwise distinct points $(x,y,z)\in \Ii_1(1,n)$ it holds that
$$\rk(H) \beta_{\Ii_1(1,n)}(x,y,z)=\int_{\Gamma \backslash \SU(1,n)}\beta(\phi(gx),\phi(gy),\phi(gz)){\rm d}\mu(g)$$
where $\mu$ on $\SU(1,n)/\Gamma$ is the unique $\SU(1,n)$-invariant probability measure.
\end{prop}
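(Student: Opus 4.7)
The plan is to realize both sides of the identity as two different explicit representatives of the same class in $\Hcb^2(\SU(1,n),\R)$ at the boundary level, and then invoke uniqueness of such representatives. The strategy works because $B := \Ii_1(1,n)$ is a strong boundary for both $\Gamma$ and $\SU(1,n)$, which gives a functorial description of the two bounded cohomology groups involved in terms of invariant bounded alternating measurable cocycles on $B^\bullet$.

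First, I would use the boundary map $\phi\colon B\to \Ss_H$ together with the strict $H$-invariant Bergmann cocycle $\beta\colon\Ss_H^3\to\R$ representing $\kappa_H^b$ (Lemma~\ref{lem:5.4}) to define a $\Gamma$-invariant bounded alternating measurable cocycle $c\colon B^3\to\R$ by $c(x,y,z)=\beta(\phi(x),\phi(y),\phi(z))$. By amenability of the $\Gamma$-action on $B$ and functoriality of the isomorphism $\Hb^2(\Gamma,\R)\cong \mathrm{H}^2(\ZL^\infty_{\mathrm{alt}}(B^\bullet)^\Gamma)$, the cocycle $c$ represents $\rho^*\kappa_H^b$. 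Analogously, the Bergmann cocycle $\beta_B$ for $\SU(1,n)$ represents the bounded K\"ahler class $\kappa_n^{cb}$ in $\Hcb^2(\SU(1,n),\R)\cong \mathrm{H}^2(\ZL^\infty_{\mathrm{alt}}(B^\bullet)^{\SU(1,n)})$.

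Next, I would apply the standard explicit formula for the transfer map at the boundary level: for any $\Gamma$-invariant bounded cocycle $c\colon B^3\to\R$, the class $T_b^*[c]$ is represented by the $\SU(1,n)$-invariant cocycle
$$\tilde{c}(x,y,z)=\int_{\Gamma\backslash \SU(1,n)} c(gx,gy,gz)\,d\mu(g),$$
the integrand being well-defined on $\Gamma\backslash \SU(1,n)$ by $\Gamma$-invariance of $c$, and the result being $\SU(1,n)$-invariant by right-invariance of $\mu$. By maximality of $\rho$ and equation~(\ref{eq:transfer}), $T_b^*\rho^*\kappa_H^b=\pm\rk(H)\kappa_n^{cb}$ (the sign being fixable by a choice of orientation, which I assume positive). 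Therefore the $\SU(1,n)$-invariant bounded cocycles $\tilde{c}$ and $\rk(H)\beta_B$ represent the same class, hence agree almost everywhere by uniqueness of the boundary representative (a consequence of the double ergodicity of $B$). To upgrade from a.e.\ to every pairwise distinct triple, I would use that both sides are $\SU(1,n)$-invariant and that $\beta_B$ is continuous on the open locus of pairwise distinct triples, together with the structure of $\SU(1,n)$-orbits on this locus, to push the identity to every such triple.

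The main obstacle is the explicit boundary formula for the transfer map in the second step. While well documented in the finite-dimensional literature \cite{MR2655315}, its derivation requires chasing the quasi-isomorphism between the standard bar resolution of $\SU(1,n)$ and the resolution by $L^\infty$ functions on $B^\bullet$, and one must be careful since the group $H=\PO_\C(p,\infty)$ is not locally compact: here only $\Hb^2(H,\R)$ is available, not a continuous bounded cohomology theory, but this suffices because the pullback $\rho^*$ goes from $\Hb^2(H,\R)$ to $\Hb^2(\Gamma,\R)$ and the transfer is applied on the $\Gamma$ side. A secondary subtlety is the passage from almost-everywhere equality to pointwise equality on pairwise distinct triples; this relies on the explicit continuous nature of the Bergmann cocycle on this locus.
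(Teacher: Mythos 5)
Your first three steps coincide with the paper's argument: the paper packages the realization of $\rho^*\kappa^b_H$ by the pulled-back Bergmann cocycle, the integral formula for the transfer map, and the vanishing of the resulting coboundary (by ergodicity of $\Gamma$ on $\Ii_1(1,n)^2$, which forces an invariant alternating $L^\infty$ function on pairs to vanish) into a single citation of \cite[Proposition 2.38 and Remark 3.1]{MR2655315}, and this yields exactly your almost-everywhere identity. The sign issue you flag is handled the same way, implicitly, by the convention that maximality means $i_\rho=+p$.

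The gap is in the promotion from almost every to every pairwise distinct triple. Your proposed mechanism --- $\SU(1,n)$-invariance of both sides plus continuity of $\beta_{\Ii_1(1,n)}$ and the orbit structure of the locus of distinct triples --- does not work, for two reasons. First, the right-hand side $\int_{\Gamma\backslash\SU(1,n)}\beta(\phi(gx),\phi(gy),\phi(gz))\,d\mu(g)$ is not known to be continuous in $(x,y,z)$, since $\phi$ is only measurable; so continuity of the left-hand side buys nothing. Second, for $n\ge 2$ the $\SU(1,n)$-orbits on pairwise distinct triples of $\partial\HCn$ are the level sets of the Cartan angular invariant, hence each single orbit is a null set in $\Ii_1(1,n)^3$; a full-measure invariant set therefore need not contain any prescribed orbit. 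This is fatal for the intended application: Corollary \ref{chainstochains} needs the identity precisely on triples lying on a common chain, i.e.\ on the extremal orbit, which has measure zero. (For $n=1$, where the two open orbits of oriented distinct triples make invariance plus full measure suffice, your argument would work, but not in higher dimension.) The paper's fix is different and essential: both sides of the identity are \emph{strict}, everywhere-defined, $\SU(1,n)$-invariant functions satisfying the cocycle identity for all arguments, and an a.e.\ equality of two such strict cocycles propagates to all pairwise distinct triples by the Fubini-plus-cocycle-identity argument of \cite[Lemma 2.11]{Poz}. You should replace your last step by that argument; note that it requires observing that the integral on the right-hand side genuinely satisfies the cocycle relation for every $4$-tuple, which follows from strictness of $\beta$ on $\Ss_H^3$ and invariance of $\mu$.
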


\begin{proof}
We use the formula in \cite[Proposition 2.38]{MR2655315}. Let $G=\SU(1,n)$, $L=G’=\Gamma$, $X=\Ss_H$ with its Borel $\sigma$-algebra. Let $\kappa’=\rho^*\kappa^b_{G}\in\Hb^2(\Gamma,\R)$  be the pullback of the bounded Kähler class and $\kappa= T_b^*\rho^*\kappa^b_{H} \in\Hcb^2(\SU(1,n),\R)$. Since $\rk(H)\beta_{\Ii_1(1,n)}$ and $\beta$ are strict alternating bounded cocycles representing respectively $\kappa$ and $\kappa'$, the formula (2.12) in \cite[Proposition 2.38]{MR2655315} yields that 
$$(x,y,z)\mapsto \rk(H)\beta_{\Ii_1(1,n)}(x,y,z)-\int_{\Gamma\backslash \SU(1,n)}\beta(\phi(gx),\phi(gy),\phi(gz)){\rm d}\mu(g)$$
is a coboundary in $L^\infty(X^3)$. By \cite[Remark 3.1]{MR2655315}, the coboundary actually vanishes and thus the equality claimed holds almost surely. Now, since both terms of the equation are everywhere defined, are $G$-invariant and satisfy the cocyle relation, the same argument as in \cite[Lemma 2.11]{Poz} proves that the equality holds for every triple of pairwise distinct points $(x,y,z)\in\Ii_1(1,n)$.\end{proof}
%%%%%OLD PROOF
%\begin{proof}
%We use the formula in \cite[Proposition 2.38]{MR2655315}. Let $G=\SU(1,n)$, $L=G’=\Gamma$, $X=\Ii_p(p,\infty)$ with its Borel $\sigma$-algebra. Let $\kappa’=\rho^*\kappa^b_{\OC(p,\infty)}\in\Hb^2(\Gamma,\R)$  be the pullback of the Kähler class and $\kappa= T_b^*\rho^*\kappa^b_{\OC(p,\infty)} \in\Hcb^2(\SU(1,n),\R)$. Since $\beta_{\Ii_1(1,n)}$ and $\beta_{\Ii_p(p,\infty)}$ are strict bounded cocycles representing respectively $\kappa$ and $\kappa'$, the formula (2.12) in \cite[Proposition 2.38]{MR2655315} yields that 
%$$(x,y,z)\mapsto \beta_{\Ii_1(1,n)}(x,y,z)-\int_{\Gamma\backslash \SU(1,n)}\beta_{\Ii_p(p,\infty)}(\phi(gx),\phi(gy),\phi(gz)){\rm d}\mu(g)$$
%is a coboundary in $L^\infty(X^3)$. By \cite[Remark 3.1]{MR2655315}, the coboundary actually vanishes and thus the equality claimed holds almost surely. Now, since both terms of the equation are everywhere defined, are $G$-invariant and satisfy the cocyle relation, the same argument as in \cite[Lemma 2.11]{Poz} proves that the equality holds for triple of pairwise distinct points $(x,y,z)\in\Ii_1(1,n)$.\end{proof}

%%%%%%%%%%%%%%
\subsection{Tight homomorphisms and tight embeddings}
%%%%%%%%%%%%%%%%
Burger, Iozzi and Wienhard introduced, in \cite{BIWtight} the notion of tight homomorphism between Hermitian Lie groups and analogously tight embeddings between Hermitian symmetric spaces: this is of fundamental importance in the study of maximal representations since on the one hand tight homomorphisms between Lie groups can be completely classified, on the other the inclusion of the Zariski closure of the image of a maximal representation is tight; this allows, in the finite dimensional setting, to reduce the study of maximal representations to Zariski-dense maximal representations, for which construction of boundary maps is much easier.

In analogy with \cite[Definition 2.4]{BIWtight}, we define:
\begin{definition}\label{def:tight} Let $\calX$ and $\mathcal{Y}$ be (possibly infinite dimensional) Hermitian symmetric spaces of non-compact type with Kähler forms $\omega_\calX$  and $\omega_\mathcal{Y}$ associated to the Riemannian metrics of minimal holomorphic sectional curvature -1. A totally geodesic embedding $f\colon \mathcal{Y}\to\calX$ is \emph{tight} if 
$$\sup_{\Delta\subset \mathcal{Y}}\int_\Delta f^*\omega_\calX=\sup_{\Delta\subset \mathcal{X}}\int_\Delta \omega_\calX.$$

 Let $H$ be any group and $G$ be the isometry group of a (possibly infinite dimensional) Hermitian  symmetric space $\calX_G$.  Let us endow $G$ with the topology of pointwise converge, that is the coarsest topology on $G$ such $g\mapsto gx$ is continuous for any $x\in \calX_G$. Since $\calX_G$ is a complete separable metric space, it is well known that $G$ is Polish for this topology \cite[\S9.B]{MR1321597}. If  $\iota:H\to G$ is a continuous homomorphim (that is the action of $H$ on $\calX_G$ is continuous) then we denote by $\iota^*(\kappa^b_G)$ the continuous bounded cohomology class of the pull-back of the Kähler cocycle. Let us observe that this cocycle is continuous since the integration depends continuously on the vertices of the triangle. We say that $\iota$ is \emph{tight} if $$\|\iota^*(\kappa^b_G)\|_\infty=\|\kappa^b_G\|_\infty.$$
\end{definition}

Assume that, in Definition \ref{def:tight},  $H$ is the connected component of the isometry group of an irreducible Hermitian symmetric space of finite dimension and consider the homomorphism $\iota\colon H\to G$. Since geodesic triangles are contained in finite dimensional symmetric spaces, thightness of $\iota$ is equivalent to the requirement that the inclusion $\Xx_H\to \Xx_G$ of the symmetric spaces associated to $G$ and $H$ is tight \cite[Corollary 2.16]{BIWtight}:
\begin {lem}
The inclusion $\Xx_H\to \Xx_G$ of a finite dimensional  totally geodesic symmetric subspace is \emph{tight} if  and only if the inclusion $\iota:H\to G$ is tight.
%$$\sup_{\Delta\subset \Xx_H}\int_\Delta \omega=\sup_{\Delta\subset \Xx_G}\int_\Delta \omega.$$
\end{lem}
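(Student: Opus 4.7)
The plan is to use the irreducibility and finite-dimensionality of $\calX_H$ to reduce both notions of tightness to a single numerical invariant. First, I would observe, via Schur's lemma applied to the isotropy action at a point of $\calX_H$, that every $H$-invariant closed real $2$-form on the finite-dimensional irreducible Hermitian symmetric space $\calX_H$ is a scalar multiple of the K\"ahler form $\omega_H$. Applied to the $H$-invariant closed form $f^*\omega_G$ (which is $H$-invariant because $\iota(H)$ preserves both $\omega_G$ and $\calX_H$), this yields $f^*\omega_G = c\,\omega_H$ for some constant $c\in\R$.

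With this identification in hand, the sharp Domic--Toledo bound on $\calX_H$ together with Lemma~\ref{lem:5.3} show that tightness of the embedding $\calX_H\hookrightarrow\calX_G$ is equivalent to the numerical equality $|c|\,\rk(\calX_H) = \rk(\calX_G)$. For the tightness of $\iota$, I would compute the Gromov norm of the pulled-back bounded K\"ahler class as follows: taking a base point $x\in\calX_H$, the representative cocycle $\iota^*C^x_\omega$ takes the form
\[
(h_0,h_1,h_2)\longmapsto \frac{1}{\pi}\int_{\Delta(\iota(h_0)x,\iota(h_1)x,\iota(h_2)x)} f^*\omega_G \;=\; c\cdot C^x_{\omega_H}(h_0,h_1,h_2),
\]
since the triangle lies entirely in $\calX_H$. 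As $H$ is a finite-dimensional simple Hermitian Lie group, $\Hcb^2(H,\R) = \R\,\kappa^{cb}_H$ with $\|\kappa^{cb}_H\|_\infty = \rk(\calX_H)$, and the standard K\"ahler cocycle $C^x_{\omega_H}$ is a representative whose sup-norm realizes the Gromov norm (Burger--Iozzi--Wienhard/Clerc). Hence $\iota^*\kappa^b_G = c\,\kappa^{cb}_H$ and $\|\iota^*\kappa^b_G\|_\infty = |c|\,\rk(\calX_H)$. Combined with $\|\kappa^b_G\|_\infty = \rk(\calX_G)$ from Lemma~\ref{lem:5.3}, tightness of $\iota$ is likewise equivalent to $|c|\,\rk(\calX_H) = \rk(\calX_G)$, and the two conditions coincide.

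The main obstacle I anticipate is establishing that the sup-norm of the cocycle $C^x_{\omega_H}$ achieves the Gromov norm of $\kappa^{cb}_H$ (i.e. that no coboundary correction can lower it). This relies on the BIW description of continuous bounded cohomology in degree two for Hermitian Lie groups, which is available precisely because $H$ is finite-dimensional; a direct alternative is to invoke \cite[Corollary 2.16]{BIWtight} and then transfer to $\calX_G$ via the fact (Proposition~\ref{prop:finite_config}) that any geodesic triangle in $\calX_G$ lies in a finite-dimensional totally geodesic subspace, so the two suprema defining tightness can be computed by exhausting over finite-dimensional subpairs. With that granted, the remainder of the argument is the bookkeeping of the single constant $c$ extracted via Schur's lemma.
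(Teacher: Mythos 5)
Your argument is correct, but it takes a more self-contained route than the paper, which disposes of this lemma in one line: since any geodesic triangle of $\calX_G$ lies in a finite dimensional totally geodesic subspace (Proposition~\ref{prop:finite_config}), both suprema in Definition~\ref{def:tight} and the norm computation of Lemma~\ref{lem:5.3} reduce to the finite dimensional situation, where the equivalence is exactly \cite[Corollary 2.16]{BIWtight} --- the route you only mention as a ``direct alternative'' at the end. Your primary argument instead re-proves that corollary in the present setting: Schur's lemma on the irreducible finite dimensional space $\calX_H$ gives $f^*\omega_G=c\,\omega_H$, and both notions of tightness collapse to $|c|\,\rk(\calX_H)=\rk(\calX_G)$, using on one side the sharp Domic--Toledo bound \cite{DT} (applied inside finite dimensional totally geodesic subspaces of $\calX_G$ for the right-hand supremum) and on the other side the identity $\iota^*C^x_{\omega_G}=c\,C^x_{\omega_H}$ (valid because $\calX_H$ is totally geodesic, so the triangles agree) together with $\|\kappa^{cb}_H\|_\infty=\rk(\calX_H)$. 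The inputs you flag as potential obstacles are indeed the only nontrivial ones, and both are established facts for finite dimensional Hermitian groups: the one-dimensionality of $\Omega^2(\calX_H)^H$ is the standard computation of $\Hc^2(H,\R)$ recalled in Section~\ref{sec:bndbasics}, and the fact that the K\"ahler cocycle realizes the Gromov norm of $\kappa^{cb}_H$ is the Burger--Iozzi--Wienhard/Clerc--{\O}rsted computation already invoked in Lemma~\ref{lem:5.3}. What your approach buys is an explicit identification $\iota^*\kappa^b_G=c\,\kappa^{cb}_H$ of the pulled-back class (not just its norm), at the cost of redoing work that \cite{BIWtight} packages; the paper's citation is shorter but leans entirely on the finite dimensional literature.
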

\begin{remark}\label{rem:hol}
If the inclusion $\mathcal X_H\to\mathcal X_G$ is totally geodesic, isometric and holomorphic, then the pullback, via the equivariant group homomorphism $\iota\colon H\to G$ of the bounded K\"ahler class, is clearly the bounded K\"ahler class. This provides many examples of tight maps: whenever the symmetric spaces have the same rank the homomorphism is tight.%There are however also examples of non-holomorphic tight homomorphisms (the irreducible representation $\SL(2,\R)\to \Sp(2n,\R)$) and of non-isometric tight homomorphisms (the totally geodesic inclusion of the Poincare' disk in the bidisk, equivariant with the diagonal inclusion $\SL(2,\R)\to \SL(2,\R)$ dilates lengths by a factor $\sqrt 2$ and is tight).
\end{remark}
%\begin{remark}\label{rem:infinite}
%In the whole paper we need to restrict to infinite dimensional symmetric spaces of finite rank in order to guarantee the existence of boundary maps. It is nevertheless worth pointing out that the restriction on the rank of the symmetric spaces is also needed to ensure boundedness of the cohomology class associated to the K\"ahler form: at the end of Section \ref{sec:Hermitian}, we exhibited infinite dimensional Hermitian symmetric spaces for the groups $\Sp^2(\calH)$ and $\OOO^{*2}(\infty)$, associated, respectively, to a real Hilbert space endowed with a non degenerate symplectic form, and to a quaternionic Hilbert space endowed with an antisymmetric Hermitian form. As in the finite dimensional case, these symmetric spaces admit a complex structure compatible with the Riemannian metric, which in turn gives rise to a K\"ahler form. Observe, however that the cohomology class defined by such a class is not bounded: for any $n$, the choice of a $2n$-dimensional subspace on which the symplectic (resp. anti-Hermitian) form is non degenerate induces a totally geodesic, isometric holomorphic embedding of the symmetric space associated to $\Sp(2n,\R)$ (resp. $\SO^*(4n)$). In particular the pullback of the K\"ahler class of the infinite dimensional symmetric space is the K\"ahler class of $\Sp(2n,\R)$ (resp. $\SO^*(4n)$) by Remark \ref{rem:hol}. However this class has norm $n$, which is arbitrary.
%\end{remark}

Let $\rho:\Gamma\to G$ be a representation of a lattice in $\SU(1,n)$ and assume that the symmetric space $\calX_G$ associated to $G$ has rank $p\in\N$. Since both pullback and transfer maps are norm non-increasing, and $\|\kappa_n^{cb}\|=1$, we deduce that $|i_\rho|\leq \|\rho^*\kappa^b_G\|_\infty$ where $i_\rho$ is defined by Equation \eqref{eq:transfer}. In particular, if the representation $\rho$ is maximal, then  $\|\rho^*\kappa^b_G\|_\infty=p$. The same argument gives:
\begin{lemma}\label{lem:tight}
Assume that a maximal representation $\rho:\G\to G$ preserves a totally geodesic Hermitian symmetric subspace $\Yy\subset \Xx_G$. Then the inclusion $\Yy\to \Xx_G$ is tight.
\end{lemma}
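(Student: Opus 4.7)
\medskip

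The plan is to exploit the fact that, for a maximal representation, the pulled back bounded Kähler class already achieves the maximal possible Gromov norm $p = \rk(\mathcal X_G)$, combined with the observation that when $\rho(\Gamma)$ preserves $\mathcal Y$ the pullback is represented by a cocycle that only integrates the Kähler form over triangles sitting inside $\mathcal Y$.

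More precisely, I would proceed as follows. First, using the chain of norm non-increasing maps noted just before the lemma, $|i_\rho|\leq \|\rho^*\kappa^b_G\|_\infty\leq \|\kappa^b_G\|_\infty=p$ (the last equality being Lemma \ref{lem:5.3}), maximality $|i_\rho|=p$ forces
\[
\|\rho^*\kappa^b_G\|_\infty=p.
\]
Second, pick any base point $x\in\mathcal Y$. Since $\rho(\Gamma)$ preserves $\mathcal Y$, for every triple $(g_0,g_1,g_2)\in\Gamma^3$ the geodesic triangle $\Delta(\rho(g_0)x,\rho(g_1)x,\rho(g_2)x)$ lies in $\mathcal Y$, and the cocycle $C_\omega^x\circ\rho^{\times 3}$ (which represents $\rho^*\kappa^b_G$ by the very definition of $\kappa^b_G$) satisfies
\[
\|C_\omega^x\circ\rho^{\times 3}\|_\infty\;\leq\;\frac{1}{\pi}\sup_{\Delta\subset\mathcal Y}\left|\int_\Delta f^*\omega_{\mathcal X_G}\right|,
\]
where $f\colon\mathcal Y\hookrightarrow\mathcal X_G$ is the inclusion and I have used $f^*\omega_{\mathcal X_G}=\omega_{\mathcal X_G}|_{\mathcal Y}$ because the embedding is totally geodesic.

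Third, combining the two steps with the trivial inequality
\[
\sup_{\Delta\subset\mathcal Y}\int_\Delta f^*\omega_{\mathcal X_G}\;\leq\;\sup_{\Delta\subset\mathcal X_G}\int_\Delta\omega_{\mathcal X_G}\;=\;\pi\, p,
\]
(the final equality coming from the $G$-transitivity of $\mathcal X_G$ together with the computation in the proof of Lemma \ref{lem:5.3}), I would conclude
\[
\pi p\;=\;\pi\|\rho^*\kappa^b_G\|_\infty\;\leq\;\sup_{\Delta\subset\mathcal Y}\int_\Delta f^*\omega_{\mathcal X_G}\;\leq\;\pi p,
\]
so all inequalities are equalities and the inclusion $\mathcal Y\to\mathcal X_G$ is tight in the sense of Definition \ref{def:tight}.

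There is no real obstacle here; the only mild point to watch is that one should not invoke the group-theoretic reformulation of tightness (the lemma after Definition \ref{def:tight}), because $\mathcal Y$ is a priori allowed to be infinite dimensional, so I would stay on the level of the integral-over-triangles definition, which works verbatim in our setting thanks to Proposition \ref{prop:finite_config}: each individual geodesic triangle is contained in a finite dimensional totally geodesic subspace, so the integrals of $\omega_{\mathcal X_G}$ are well-defined and the sharp finite dimensional Domic--Toledo bound applies.
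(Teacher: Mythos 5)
Your proof is correct and follows exactly the argument the paper intends: the paper only writes ``The same argument gives'' after the observation that maximality forces $\|\rho^*\kappa^b_G\|_\infty=p$, and your write-up fills in precisely that argument (choose the base point in $\Yy$, bound the representative cocycle by the supremum of Kähler integrals over triangles in $\Yy$, and sandwich against the Domic--Toledo bound). Your closing remark about staying with the integral definition of tightness rather than the group-theoretic reformulation is also the right precaution.
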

%\begin{lemma}\label{lem:tight}
%Assume that a maximal representation $\rho:\G\to \OC(p,\infty)$ preserves a totally geodesic Hermitian symmetric subspace $\Yy\subset \Xx_\C(p,\infty)$. Then the inclusion $\Yy\to \Xx_\C(p,\infty)$ is tight.
%\end{lemma}

%%%%%%%%%%%%%%%%%
\subsection{Reduction to geometrically dense maximal representations}
%%%%%%%%%%%%%%%%%

In this subsection, we explain how to reduce the understanding of maximal representations to  geometrically dense maximal representations.

Recall that the totally geodesic subspaces of $\Xx_\C(p,\infty)$ are products of irreducible factors that are either finite dimensional or are isomorphic to either $\Xx_\C(q,\infty)$ or $\Xx_\R(q,\infty)$ or $\Xx_\H(q,\infty)$ \cite[Corollary 1.9]{Duc15}. %In this note we denote by $\Xx_{q,\infty}$ the infinite symmetric space associated to $\SU(p,\infty)$, and by  $\Xx_{q,\infty}^\R$, $\Xx_{q,\infty}^\H$ the ones associated to $\SO(p,\infty)$ and $\Sp(p,\infty)$.

%The purpose of the section is to prove the following:
\begin{prop}\label{prop:2}
Let $\rho:\Gamma\to\PO_\C(p,\infty)$ be a maximal representation. There is a minimal $\Gamma$-invariant totally geodesic subspace $\calY$ of $\Xx_\C(p,\infty)$. This space $\calY$ splits isometrically as a direct product $\calY=\calY_1\times\ldots\times\calY_k$ (possibly reduced to a unique factor), and for each $i$, either $\calY_i$ is finite dimensional Hermitian, or  it is isometric to $\Xx_\C(m,\infty)$ and the restricted representation $\rho_i:\G\to\Isom(\mathcal Y_i)$ is maximal and geometrically dense. % \Jcom{Don't we need to pass to a subgroup of finite index here ? Bea: yes we do, and we should observe somewhere that the restriction of a maximal representation to a finite index subgroup is maximal}
\end{prop}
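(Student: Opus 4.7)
The plan is to construct a minimal invariant totally geodesic subspace $\calY$, decompose it via de Rham, and analyze each factor using maximality.

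To establish the existence of a minimal $\G$-invariant totally geodesic subspace, I would argue as follows. Since $\rho$ is maximal, Proposition \ref{prop:nofix} excludes fixed points at infinity, so whenever the current invariant subspace is not geometrically dense, one can descend into a proper invariant totally geodesic subspace. Termination of this procedure rests on the finite telescopic dimension of $\Xx_\C(p,\infty)$ (at most $p$) and a Zorn-type argument: the ranks of invariant subspaces form a descending chain in $\{0, 1, \ldots, p\}$, and once the rank stabilizes, the remaining degrees of freedom lie in the dimensions of the finite-dimensional factors, which provide further descending invariants.

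The classification of Duchesne \cite[Theorem 1.8]{Duc15} then writes $\calY$ as a product $\calY_1 \times \ldots \times \calY_k$ of irreducible symmetric spaces, each either finite-dimensional classical or isometric to some $\Xx_\K(m,\infty)$ with $\K \in \{\R, \C, \H\}$. Since this decomposition is canonical, $\G$ permutes the factors, and replacing $\G$ by a finite-index subgroup $\G_0$ preserving each factor (with maximality preserved via Lemma \ref{lem:res}), the projections $\rho_i\colon \G_0 \to \Isom(\calY_i)$ are well-defined. To identify the possible factor types, I would analyze the pullback of the K\"ahler form of $\Xx_\C(p,\infty)$ to $\calY$: it vanishes on non-Hermitian factors and decomposes as a sum of K\"ahler forms over the Hermitian ones. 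Combining this with the tightness of $\calY \hookrightarrow \Xx_\C(p,\infty)$ (Lemma \ref{lem:tight} applied to the Hermitian part of $\calY$) and the rank bound $\sum_i \rk(\calY_i) \leq p$, the maximality condition $|i_\rho| = p$ forces all factors to be Hermitian and to saturate the rank. Of the infinite-dimensional Hermitian finite-rank irreducible symmetric spaces, only $\Xx_\R(2,\infty)$ (tube type) and $\Xx_\C(m,\infty)$ (non-tube type) are candidates; the former is ruled out because no holomorphic totally geodesic tight embedding of $\Xx_\R(2,\infty)$ into $\Xx_\C(p,\infty)$ exists when $p$ is finite, since the tube-type holomorphic totally geodesic subspaces of the non-tube-type $\Xx_\C(p,\infty)$ are of the form $\Xx_\C(p',p')$ with $p' \leq p$.

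Finally, maximality of each $\rho_i$ follows by combining the Milnor-Wood bounds $|i_{\rho_i}| \leq \rk(\calY_i)$, the rank saturation $\sum_i \rk(\calY_i) = p$, and the additivity $i_\rho = \sum_i i_{\rho_i}$ implied by the K\"ahler form decomposition: global maximality forces equality in every factor. Geometric density of each $\rho_i$ follows from the minimality of $\calY$: any proper $\G_0$-invariant totally geodesic subspace of some $\calY_i$ would, together with the other factors, yield a proper $\G_0$-invariant totally geodesic subspace of $\calY$, and intersecting with $\G$-translates would produce a strictly smaller $\G$-invariant totally geodesic subspace of $\calY$, contradicting minimality. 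The main obstacle I expect is the second stage: verifying carefully that the K\"ahler form pulls back with the correct additive decomposition across the factors, and checking that the tube type obstruction genuinely rules out $\Xx_\R(2,\infty)$ as an infinite-dimensional factor of $\calY$.
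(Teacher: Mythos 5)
Your overall architecture matches the paper's: existence of a minimal invariant subspace from the absence of fixed points at infinity, Duchesne's classification to split $\calY$ into irreducible factors, passage to a finite-index subgroup preserving the factors, tightness plus rank considerations to force the factors to be Hermitian, exclusion of $\Xx_\R(2,\infty)$ by the tube-type obstruction, and maximality of the factors by additivity of the Toledo invariant. Two steps, however, contain genuine gaps. First, your termination argument for the existence of a minimal invariant subspace does not work: a strictly descending chain of $\Gamma$-invariant totally geodesic subspaces can have constant rank and consist entirely of infinite-dimensional pieces (nested copies of $\Xx_\C(m,\infty)$ corresponding to a nested sequence of closed subspaces of signature $(m,\infty)$), so neither the rank nor ``the dimensions of the finite-dimensional factors'' is a well-founded descending invariant. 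The correct mechanism, which the paper invokes by citing \cite[Proposition 4.4]{Duc13}, is a Zorn argument in which the intersection of a descending chain of invariant closed convex subsets is either nonempty (and again totally geodesic and invariant) or empty, in which case finite telescopic dimension produces a canonical fixed point at infinity, contradicting Proposition \ref{prop:nofix}.

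Second, in ruling out a factor isometric to $\Xx_\R(2,\infty)$ you assume the embedding $\calY_i\hookrightarrow\Xx_\C(p,\infty)$ is holomorphic, but tightness of a totally geodesic embedding does not give you holomorphicity; what is available is only that the inclusion is totally geodesic and tight. The paper instead reduces to a finite-dimensional tight embedding $\calY^0\subset\Xx_\C(p,N)$ by replacing each infinite-dimensional factor with a standard $\Xx_\K(l,2l)$ containing the vertices of a given triangle (using transitivity of the isometry group on standard embeddings to preserve the supremum of $\int_\Delta\omega$), and then quotes the finite-dimensional classification of tight embeddings, namely \cite[Theorem 7.1]{BIWtight} and \cite[\S 5.2]{HamPoz}: the image of any tube-type factor under a tight embedding must land in a copy of $\Xx_\C(m,m)$ with $m\le p$, which bounds the dimension and excludes $\Xx_\R(2,\infty)$. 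Your alternative route to ``all factors are Hermitian'' --- vanishing of the pulled-back K\"ahler form on non-Hermitian factors plus additivity and a per-factor Milnor--Wood inequality --- is closer to first principles than the paper's citation of \cite[Theorem 7.1]{BIWtight}, but it requires knowing that the Gromov norm of the bounded K\"ahler class of each infinite-dimensional Hermitian factor equals its rank (the content of Lemma \ref{lem:5.3}, proved in the paper only for $\PO_\C(p,\infty)$ and $\PO_\R^+(2,\infty)$), and you correctly flag the additivity verification as the delicate point; the paper's reduction to finite dimension is precisely how it sidesteps both issues.
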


\begin{proof}
Since the representation is maximal, there is no fixed point at infinity (Proposition \ref{prop:nofix}). Thus there is a minimal totally geodesic $\Gamma$-invariant subspace $\calY$ (otherwise ~\cite[Proposition 4.4]{Duc13} would yield a fixed point at infinity). Since $\calY$ is a totally geodesic subspace of $\Xx_\C(p,\infty)$, it is a symmetric space of non-positive curvature operator and finite rank. Thus $\calY$ decomposes as a product $\calY=\calY_0\times\calY_1\times\cdots\times \calY_k$ where  each $\calY_k$ is a symmetric space of finite dimension of non-compact type, the Euclidean de Rham factor  or the symmetric space associated to some $\OO(l,\infty)$ with $\K=\R$, $\C$ or $\H$ (\cite[Corollary 1.10]{Duc15}). Up to passing to a finite index subgroup, we may assume that $\Gamma$ preserves each factor of this splitting (see Lemma \ref{lem:res}). Since $\calY$ is minimal as $\Gamma$-invariant totally geodesic subspace, the induced action $\Gamma\action\calY_i$ is minimal as well.

Recall that a geodesic segment in $\calY$ has  the form $\sigma(t)=(\sigma_1(t),\dots,\sigma_k(t))$ where each $\sigma_i$ is a curve with constant speed (which may vary from factor to factor). 
Since the inclusion $\calY\subset \Xx$ is tight (Lemma \ref{lem:tight}) we have
$$\sup_{\Delta\subset \calY}\int_\Delta \omega_\Xx=\sup_{\Delta\subset \Xx}\int_\Delta \omega_\Xx$$
where $\Delta$ is a geodesic triangle. Let $\Delta_i$ be the projection of  $\Delta$ to the factor $\calY_i$. The triangle $\Delta_i$ is completely determined by three points. If $\calY_i$ has infinite dimension, these three points are given by three  positive definite linear subspaces and thus are included in some standard embedding of $\Xx_\K(l,2l)$ in $\calY_i$. We denote by $\calY_i^0$ either $\calY_i$, if the subspace already has finite dimension or the image of a standard embedding of $\Xx_\K(l,2l)$ in $\calY_i$. Finally we denote by $\calY^0$ the product $\calY_0^0\times\cdots\times \calY_k^0$. The symmetric space $\calY^0$ has finite dimension,  and, since the isometry group of $\Xx_\K(p,\infty)$ acts transitively on standard embeddings of $\Xx_\C(l,2l)$, we have

$$\sup_{\Delta\subset \calY^0}\int_\Delta \omega=\sup_{\Delta\subset \calY}\int_\Delta \omega=\sup_{\Delta\subset \Xx}\int_\Delta \omega.$$

Now $\calY^0$ lies in some standard copy $\Xx^0$ of $\Xx_\C(p,N)$ for $N\geq p$ (Lemma \ref{lem:fdtgs}) and thus the embedding of $\calY^0$ in $\Xx^0$ is tight and we can apply \cite[Theorem 7.1]{BIWtight}. So, we know that each $\calY_i^0$ is Hermitian of non-compact type and the Euclidean de Rham factor is trivial. In particular all $\calY_i$ of finite dimension are Hermitian  and the infinite dimensional ones are a priori $\Xx_\C(m,\infty)$ and $\Xx_\R(2,\infty)$ but the latter is impossible. To see that, %let us come back to $\calY^0$, if
assume by contradiction that there is factor $\calX_\R(2,n)$ in $\calY^0$. Then the classification of tight embeddings obtained in \cite[\S5.2]{HamPoz} implies that the image of the tube type factors via the embedding lies in some totally geodesic copy of $\calX_\C(m,m)$. By considering the rank, we have $m\leq p$ and by considering the dimension, this gives a finite bound on $n$. Thus there is no factor $\Xx_\R(2,\infty)$ in $\calY$.\\
Finally, the fact that each representation $\rho_i$ is maximal is standard (see for example \cite[Lemma 2.6 (4)]{BIWtight}).
\end{proof}

\section{Representations  in $\PO_\C(p,\infty)$}\label{sec:hyplat}

We first focus on the case where the target is $\PO_\C(p,\infty)$, in this case we prove that the representation preserves a finite dimensional totally geodesic subspace.
Thus, in particular, if the domain $\G$ is a lattice in $\SU(1,n)$ for $n\geq2$, superrigidity of maximal representations $\rho:\G\to G$ where $G$ is a Hermitian Lie group applies  \cite{BI, Poz,KM}\footnote{The only case that is still open is the case of  representations of non-uniform lattices in groups of tube type.}. To this aim %In order to extend these rigidity results to the infinite dimensional setting, we will use properties of a boundary map $\phi:\Ii_1(1,n)\to \Ii_p(p,\infty)$ to construct a finite dimensional subspace of $\mathcal H_\C(p,\infty)$ preserved by $\rho(\Gamma)$. For this purpose 
we need a good understanding of the geometry of the boundaries $\Ii_1(1,n)$ and $\Ii_p(p,\infty)$.
%%%%%%%%%%%%%%%%%%%
\subsection{The geometry of the boundary of $\HCn$}\label{sec:4.3}
%%%%%%%%%%%%%%%%%%%%%
%In this file we will denote by $\partial\HCn$ the boundary of the complex hyperbolic space, and by $\Ii_p(p,\infty)$ the set of maximal isotropic subspaces for the form (of signature $(p,\infty)$) defining the group $\SU(p,\infty)$, that we realize as a $\SU(p,\infty)$-invariant subset of the visual boundary $\partial_\infty\Xx_{p,\infty}$. Two points $X,Y\in\Ii_p(p,\infty)$ are \emph{transverse} if they are transverse as subspaces of the underlying Hilbert space $\Hh$.

Recall that a chain $\calC\subseteq \partial\HCn$ is the boundary of a totally geodesic holomorphic disk $\mathcal D\subseteq \HCn$, and is the intersection of $\partial\HCn\subseteq\C\P^n$ with a complex projective line in $\C\P^n$. For this reason, a chain is uniquely determined by two points belonging to it. Given two distinct points $x,y\in\partial\HCn$ we will denote by $\calC_{x,y}$ the unique chain containing the points $x$ and $y$.
More generally, for every $k$-dimensional subspace $\P V\subseteq \C\P^n$ that intersects $\HCn$, the subspace
$\P V$ intersects $\HCn$ in a totally geodesic submanifold isometric to $\HCk$ and intersects $\partial\HCn$ in a $2k-1$-dimensional sphere $\partial \HCk$. Following \cite{Goldman}, we will call any such sphere a $k$-\emph{hyperchain}.

Part of the work of \cite{Poz} was aimed at showing that a similar picture exists in higher rank: any two transverse subspaces $X,Y\in\Ii_p(p,\infty)$ determine a $2p$-dimensional subspace $\langle X,Y\rangle$ and therefore a finite dimensional totally geodesic subspace $\Xx_\C(p,p)\subset\Xx_\C(p,\infty)$ as well as a subset $\Ii_p(p,p)\subset\Ii_p(p,\infty)$. As in \cite{Poz}, we will refer to these subsets as \emph{$p$-chains} or merely \emph{chains}.

Here and in the rest of the article, when we will deal with differentiable manifolds, \emph{almost surely} will mean for a set of full measure in the  Lebesgue measure class.  We say that a measurable map  $\phi:\partial\HCn\to\Ii_p(p,\infty)$ \emph{almost surely maps chains to chains} if for almost every chain $\calC\subseteq \partial\HCn$ there is a $p$-chain $\calT\subset \Ii_p(p,\infty)$ such that for almost every point $x\in\calC$, $\phi(x)\in\calT$. In this case, we say that the chain $\calC$ is \emph{generic for $\phi$}.
 In order to guarantee that a measurable map $\phi$ almost surely maps chains to chains, it is enough to check that for almost every pair $(x,y)\in\partial\HCn\times\partial\HCn$, the subspaces $\phi(x)$ and $\phi(y)$ are transverse and that for almost every $z\in \calC_{x,y}$, the subspace $\phi(z)$ is contained in $\langle\phi(x),\phi(y)\rangle$ (this statement can be found e.g. in \cite[Lemma 4.2]{Poz}). In this case, we say that the pair $(x,y)$ is \emph{generic for $\phi$}. 

 A consequence of Proposition  \ref{prop:chainpres} is the following:
\begin{cor}\label{chainstochains} Let $\G<\SU(1,n)$ be a lattice. Assume that a representation $\rho:\G\to\PO_\C(p,\infty)$ is maximal and admits an equivariant boundary map $\phi:\partial\Xx_\C(1,n)\to\Ii_p(p,\infty)$. Then the  boundary map $\phi$ almost surely maps chains to chains.
\end{cor}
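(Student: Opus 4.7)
The strategy is to combine Proposition \ref{prop:chainpres} applied to chain triples in $\partial\HCn$ with the sharpness of the Bergmann bound $|\beta|\le p$ on $\Ii_p(p,\infty)^3$, and then pass from an ``almost every $g$'' statement to an ``almost every chain'' statement via Fubini. Recall that $\beta_{\Ii_1(1,n)}$ takes values in $[-1,1]$ and attains $\pm 1$ exactly on ordered chain triples, while $\beta$ takes values in $[-p,p]$ and attains $\pm p$ exactly on triples of pairwise transverse points spanning a $2p$-dimensional subspace of signature $(p,p)$, i.e.\ on triples contained in a common $p$-chain. First I would fix an arbitrary positively oriented chain triple $(x,y,z)\in (\partial\HCn)^3$ and apply Proposition \ref{prop:chainpres} to obtain
\[
p \;=\; p\cdot\beta_{\Ii_1(1,n)}(x,y,z)\;=\;\int_{\G\backslash\SU(1,n)} \beta(\phi(gx),\phi(gy),\phi(gz))\,d\mu(g).
\]
Since the integrand is bounded above by $p$ pointwise, it must equal $p$ for $\mu$-almost every $g$, so $(\phi(gx),\phi(gy),\phi(gz))$ is a maximal Bergmann triple, hence pairwise transverse and contained in a common $p$-chain of $\Ii_p(p,\infty)$, for $\mu$-a.e.\ $g$.

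Next, let $T^+$ be the $\SU(1,n)$-homogeneous space of positively oriented ordered chain triples in $\partial\HCn$, endowed with its $\SU(1,n)$-invariant smooth measure class. The previous step gives, for each fixed $(x,y,z)\in T^+$, that the ``bad'' set of $g$ is $\mu$-null. Fubini then produces some $g_0$ for which the bad set of triples $(x,y,z)$ is null in $T^+$; using the invariance of the measure class under $\SU(1,n)$ to translate back by $g_0^{-1}$, I conclude that for a.e.\ $(x,y,z)\in T^+$ the image $(\phi(x),\phi(y),\phi(z))$ lies in a common $p$-chain. Disintegrating the measure on $T^+$ over the space of chains (using that the measure on $T^+$ is induced by the Haar measure class on the chain space together with the cube of the natural Lebesgue measure on each chain), Fubini yields that for a.e.\ chain $\calC\subset\partial\HCn$, a.e.\ ordered triple $(x,y,z)\in \calC^3$ of pairwise distinct points is sent to a maximal Bergmann triple.

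Finally, I would fix such a generic chain $\calC$ and, by one more application of Fubini, pick $x_0,y_0\in\calC$ such that for almost every $z\in\calC$ the triple $(\phi(x_0),\phi(y_0),\phi(z))$ is maximal. Maximality forces pairwise transversality of the images, so $\phi(x_0)$ and $\phi(y_0)$ are transverse and determine a unique $p$-chain $\calT\subset\Ii_p(p,\infty)$, namely the one associated with the $2p$-dimensional subspace $\langle\phi(x_0),\phi(y_0)\rangle$ of signature $(p,p)$; moreover for a.e.\ $z\in\calC$ the image $\phi(z)$ is contained in that subspace, hence in $\calT$. This is exactly the chain-preservation property. The main conceptual obstacle is the bookkeeping between the measure on chain triples (which is singular with respect to the product Lebesgue measure on $(\partial\HCn)^3$) and the measure on ordinary triples; the crucial point that unlocks the argument is that Proposition \ref{prop:chainpres} is an \emph{everywhere} equality, so it may be applied pointwise on the measure-zero set of chain triples and still yield a meaningful almost-sure conclusion after integrating out $g$.
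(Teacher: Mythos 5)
Your proposal is correct and follows essentially the same route as the paper: apply Proposition \ref{prop:chainpres} (which holds for \emph{every} triple) to chain triples, use that the integrand is bounded by $p$ to force $\beta(\phi(gx),\phi(gy),\phi(gz))=\pm p$ for a.e.\ $g$, invoke the characterization of extremal Bergmann triples as transverse triples in a common $p$-chain, and then pass to almost every chain. The paper is merely terser, using transitivity of $\SU(1,n)$ on each of the two orbits of chain triples in place of your explicit Fubini/disintegration bookkeeping, and citing the criterion of \cite[Lemma 4.2]{Poz} for the final reduction to generic pairs $(x_0,y_0)$ that you carry out by hand.
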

\begin{proof}
Observe that, since $\SU(1,1)$ acts transitively on positively oriented triples in $\Ii_1(1,1)$, there are precisely two $\SU(1,n)$ orbits of pairwise distinct triples of points on a chain. Since the equality in Proposition \ref{prop:chainpres} holds for every triple $(x,y,z)$, we deduce that for almost every triple $(x,y,z)$ on a chain, the triple $(\phi(x),\phi(y),\phi(z))$ is contained in a $p$-chain and consists of transverse points. Hence $\phi$ almost surely maps chains to chains.
\end{proof}
The purpose of the rest of the section will then be to show the following proposition:
\begin{prop}\label{prop:1}
If $\phi:\partial\HCn\to\Ii_p(p,\infty)$ is measurable and almost surely maps chains to chains, then there exists a finite dimensional, totally geodesic subspace $\Xx_{p,np}\subset\Xx_{p,\infty}$
 such that $\phi(\partial\HCn)\subset \partial \calX_{p,np}$ up to discarding a null subset of $\partial\HCn$.
\end{prop}
The proof of Proposition \ref{prop:1} is a measurable version of an easy geometric construction (Lemma \ref{lem:1}). Compare to \cite{BI} and \cite{Poz} for similar statements and arguments. 
In order to prove the proposition, we will need several easy lemmas, the first of which is a straightforward consequence of Fubini's theorem:
\begin{lemma}\label{lem:Fubini}
Let $A,B$ be differentiable manifolds, and $\pi:A\to B$ be a smooth fibration. Then
\begin{enumerate}
\item if $\mathcal O\subseteq B$ has full measure, then $\pi^{-1}(\mathcal O)\subset A$ has full measure; 
\item if $\mathcal Y\subseteq A$ has full measure, then for almost every $x\in B$, $\mathcal Y\cap\pi^{-1}(x)$ has full measure in $\pi^{-1}(x)$. 
\end{enumerate}  
\end{lemma}
%\begin{proof}
%This follows from the fact that locally the map $\pi$ can be written as the surjection from an open subset of $\R^{l+k}\to\R^l$.
%\end{proof}
In the proof of Proposition \ref{prop:1}, we will argue by induction on the dimension $n$ of $\partial\HCn$. In particular, in order to have at our disposal the inductive step, we will need the following:
\begin{lem}\label{lem:a}
If the measurable map $\phi:\partial\HCn\to\Ii_p(p,\infty)$ almost surely maps chains to $p$-chains, then for every $1\leq k\leq n$, and for almost every $k$-hyperchain $\partial\HCk\subset\partial\HCn$, the restriction $\phi|_{\partial\HCk}:\partial\HCk\to\Ii_p(p,\infty)$ almost surely maps chains to chains.
\end{lem}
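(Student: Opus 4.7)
The plan is to set up an incidence variety between $k$-hyperchains and chains and then apply the Fubini-type Lemma~\ref{lem:Fubini} twice.

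Let $\mathcal{H}_k$ denote the smooth homogeneous $\SU(1,n)$-space of $k$-hyperchains in $\partial\HCn$, and $\mathcal{C}$ the homogeneous space of chains in $\partial\HCn$ (both endowed with their natural $\SU(1,n)$-invariant measure classes). I will work with the incidence manifold
\[
\mathcal{P}=\{(H,\calC)\in \mathcal{H}_k\times\mathcal{C}\mid \calC\subset H\},
\]
together with its two canonical projections $\pi_{\mathcal{H}}\colon\mathcal{P}\to\mathcal{H}_k$ and $\pi_{\mathcal{C}}\colon\mathcal{P}\to\mathcal{C}$. Both maps are smooth, surjective, and $\SU(1,n)$-equivariant between homogeneous spaces, so in particular the fibers are smooth submanifolds: the fiber of $\pi_{\mathcal{H}}$ over $H$ is naturally identified with the space $\mathcal{C}_H$ of chains of $H=\partial\HCk$, and the fiber of $\pi_{\mathcal{C}}$ over $\calC$ is the space of $k$-hyperchains containing $\calC$.

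The hypothesis that $\phi$ almost surely maps chains to chains means precisely that the subset $\mathcal{C}_{\mathrm{gen}}\subset\mathcal{C}$ of chains generic for $\phi$ has full measure. By Lemma~\ref{lem:Fubini}(1) applied to $\pi_{\mathcal{C}}$, the preimage $\pi_{\mathcal{C}}^{-1}(\mathcal{C}_{\mathrm{gen}})$ has full measure in $\mathcal{P}$. Applying Lemma~\ref{lem:Fubini}(2) to $\pi_{\mathcal{H}}$, we conclude that for almost every $H\in\mathcal{H}_k$ the intersection $\pi_{\mathcal{C}}^{-1}(\mathcal{C}_{\mathrm{gen}})\cap\pi_{\mathcal{H}}^{-1}(H)$ has full measure inside the fiber $\pi_{\mathcal{H}}^{-1}(H)\simeq \mathcal{C}_H$.

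To conclude it remains only to observe that a chain of the hyperchain $H=\partial\HCk$ is, by definition, the boundary of a complex geodesic of $\HCk$, which is also a complex geodesic of $\HCn$; hence chains of $\partial\HCk$ coincide (as sets with their measure class) with chains of $\partial\HCn$ lying inside $H$. Moreover, the property of being generic for $\phi$ only involves the values of $\phi$ on points of the chain itself, so it is insensitive to whether we regard the chain as sitting in $\partial\HCk$ or in $\partial\HCn$. Consequently, for almost every $H$, almost every chain of $\partial\HCk$ is generic for $\phi|_H$, which is exactly the assertion that $\phi|_{\partial\HCk}$ almost surely maps chains to chains.

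The only nontrivial point is the measure-theoretic compatibility between the fiber measure on $\pi_{\mathcal{H}}^{-1}(H)$ and the natural $\Isom(\partial\HCk)$-invariant measure class on $\mathcal{C}_H$; this is routine since all spaces in sight are homogeneous and the identification is $\SU(1,n)$-equivariant, so both measures lie in the unique invariant measure class and thus coincide up to a set of measure zero.
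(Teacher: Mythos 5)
Your proof is correct and follows essentially the same route as the paper: both set up the incidence manifold of pairs (chain, $k$-hyperchain) with the chain contained in the hyperchain, pull back the full-measure set of generic chains via Lemma~\ref{lem:Fubini}(1), and then apply Lemma~\ref{lem:Fubini}(2) to the projection onto hyperchains. Your version merely spells out the identification of chains of $\partial\HCk$ with chains of $\partial\HCn$ contained in $H$ and the compatibility of measure classes, which the paper leaves implicit.
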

\begin{proof}
This is an application of Lemma \ref{lem:Fubini}: consider the configuration spaces
$$\begin{array}{l}
\mathcal E_1^k=\{(C,X)|\; C\text{ is a chain}, X\text{ is a $k$-hyperchain}, C\subseteq X\},\\
\mathcal E_0=\{C|\;C\text{ is a chain}\}.\end{array} $$
Clearly, for every $k$, there is a smooth surjection $\mathcal E_1^k\to\mathcal E_0$. In particular the subset $\Yy\subset \mathcal E_1^k$ consisting of pairs $(C,X)$ such that $C$ is generic for $\phi$ has full measure. Lemma \ref{lem:Fubini} (2) implies the desired statement.
\end{proof}

In the inductive step, we will need to increase the dimension of $\partial\HCn$ by one. For this purpose the following additional configuration spaces will be handy:
$$\begin{array}{l}
\calF_0=\{(C,X)|\; C \text{ is a chain, } X\text{ is a $(n-1)$-hyperchain, } C\cap X\; \text{is a point}\}\\
\calF_1=\{(C,X,c,x)|\; (C,X)\in\calF_0,\; c\in C,\; x\in X \}\\
\calF_2=\{(C,X,c)|\;(C,X)\in\calF_0, c\in C\}\\
\calF_3=\{(C,X,x)|\;(C,X)\in\calF_0,  x\in X\}
\end{array}$$

\begin{lem}\label{lem:0}
Assume $\phi:\partial\HCn\to\Ii_p(p,\infty)$ almost surely maps chains to chains. Then for almost every pair $(C,X)\in \calF_0$, it holds:
\begin{enumerate}
\item the chain $C$ is generic for $\phi$;
\item for almost every pair $(c,x)\in C\times X$, the pair $(c,x)$ is generic for $\phi$;
\item for almost every point $c\in C$, the pair $(c, C\cap X)$ is generic for $\phi$;
\item for almost every point $x\in X$, the pair $(x, C\cap X)$ is generic for $\phi$.
\end{enumerate}
\end{lem}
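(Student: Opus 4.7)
The plan is a systematic Fubini-type argument using Lemma \ref{lem:Fubini} and the configuration spaces $\calF_0, \calF_1, \calF_2, \calF_3$. The first step is to upgrade the hypothesis, which concerns almost every chain, to a statement about almost every pair of boundary points: I claim that the set $\mathcal G \subset \partial\HCn \times \partial\HCn$ of pairs $(c,x)$ that are generic for $\phi$ (in the sense recalled in Section \ref{sec:4.3}) has full measure. To prove this claim, fix a generic chain $C$ with associated $p$-chain $\mathcal T_C$. By the hypothesis, $\phi$ sends almost every point of $C$ into $\mathcal T_C$, and since non-transverse pairs in $\mathcal T_C \cong \Ii_p(p,p)$ form a proper subvariety, almost every pair $(c,x) \in C \times C$ satisfies $\phi(c), \phi(x) \in \mathcal T_C$ and is transverse; for any such pair, $\langle \phi(c), \phi(x)\rangle$ is precisely the $(p,p)$-subspace underlying $\mathcal T_C$, so the containment $\phi(z) \in \langle \phi(c), \phi(x)\rangle$ for almost every $z \in \calC_{c,x} = C$ follows. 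Hence a.e.\ pair in $C \times C$ is generic. Reassembling via the smooth submersion $\partial\HCn \times \partial\HCn \setminus \Delta \to \mathcal E_0$, $(c,x)\mapsto \calC_{c,x}$, together with Lemma \ref{lem:Fubini}, the subset $\mathcal G$ has full measure in $\partial\HCn \times \partial\HCn$.

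With this upgrade available, each of the four assertions reduces to an application of Lemma \ref{lem:Fubini} through one of the configuration spaces. For (1), I will use Lemma \ref{lem:Fubini}(1) for the forgetful projection $\calF_0 \to \mathcal E_0$, $(C,X)\mapsto C$: the preimage of the full-measure set of generic chains is of full measure in $\calF_0$. For (2), I first pull back $\mathcal G$ along the smooth surjection $\calF_1 \to \partial\HCn \times \partial\HCn$, $(C,X,c,x) \mapsto (c,x)$, to obtain a full-measure subset of $\calF_1$; then Lemma \ref{lem:Fubini}(2) applied to the forgetful projection $\calF_1 \to \calF_0$, whose fiber over $(C,X)$ is $C \times X$, yields the conclusion. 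For (3) and (4) the arguments are structurally identical, using the evaluation maps $\calF_2 \to \partial\HCn \times \partial\HCn$, $(C,X,c)\mapsto (c, C\cap X)$, and $\calF_3 \to \partial\HCn \times \partial\HCn$, $(C,X,x)\mapsto (x, C\cap X)$, coupled with the forgetful projections to $\calF_0$.

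The only delicate step, and hence the main thing to verify, is that the various maps between configuration spaces are smooth surjections to which Lemma \ref{lem:Fubini} is applicable. The forgetful projections to $\calF_0$ are transparent, whereas the evaluation maps to $\partial\HCn \times \partial\HCn$ are smooth surjections onto the open dense subset of distinct pairs; this follows from the transitivity of $\SU(1,n)$ on the relevant flag-type configurations. Once these are in place, the four items follow without further obstacle.
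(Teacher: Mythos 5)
Your overall strategy is the same as the paper's: pull back full-measure sets of generic chains and generic pairs through the configuration spaces $\calF_0,\dots,\calF_3$ and apply Lemma~\ref{lem:Fubini} to the forgetful and evaluation fibrations. Items (1)--(4) are handled in exactly this way in the paper, and your reduction of (2)--(4) to the single claim that almost every pair in $\partial\HCn\times\partial\HCn$ is generic is also the reduction the paper makes (it states this claim at the start of its verification of (2) and reuses the same scheme for (3) and (4)).

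The one step that does not work is your justification of that preliminary claim, specifically its transversality half. You argue that, because non-transverse pairs form a proper subvariety of $\Ii_p(p,p)^2$, almost every pair $(c,x)\in C\times C$ has $\phi(c)$ transverse to $\phi(x)$. This inference is invalid for a merely measurable $\phi$: the pushforward of the measure on $C\times C$ under $(\phi,\phi)$ has no reason to give measure zero to a proper subvariety. For instance $\phi$ could be essentially constant on $C$; then $C$ is still generic as a chain (its essential image lies in some $p$-chain), yet no pair on $C$ is transverse and hence none is generic. So ``almost every chain is generic'' does not by itself yield ``almost every pair is generic''; the missing ingredient is precisely the almost-sure transversality of $\phi(c)$ and $\phi(x)$. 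The paper does not derive this implication either --- it simply asserts that the set of generic pairs has full measure --- and in the intended application the transversality is supplied externally: the boundary maps of Theorem~\ref{Shilov} are transverse on almost every pair, and Corollary~\ref{chainstochains} (via the maximality of the Bergmann cocycle in Proposition~\ref{prop:chainpres}) gives pairwise transversality along almost every chain. The containment half of your argument is correct (once $\phi(c),\phi(x)$ are transverse and both lie in $\mathcal T_C$, their span is the underlying $(p,p)$-subspace and almost every $\phi(z)$, $z\in C$, lies in it), as is everything downstream. To repair the write-up, either build almost-sure transversality of pairs into the hypothesis (which is what the application provides) or cite where it comes from, rather than deducing it from a dimension count on the target.
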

\begin{proof}
Since the intersection of finitely many full measure subsets has full measure, it is enough to verify that each condition holds on a full measure set.
The pairs for which condition (1) holds have clearly full measure: by assumption almost every chain is generic, and $\calF_0$ smoothly fibers over the set of all chains.
 
In order to verify (2) observe that, since $\phi$ almost surely maps chains to chains, the set of pairs $(c,x)\in\partial\HCn\times\partial\HCn$ that are generic for $\phi$ has full measure. Consider now the forgetful map $\pi:\calF_1\to \partial\HCn\times\partial\HCn$. If we restrict to the open dense subset of $\calF_1$ consisting of 4-tuples  $(C,X,c,x)$ such that $c, x$ and $C\cap X$ are pairwise distinct, $\pi$ gives a surjective fibration onto the (open and dense) set of transverse pairs in  $\partial\HCn\times\partial\HCn$. In particular we deduce from Lemma \ref{lem:Fubini} (1) that the set of 4-tuples $(C,X,c,x)\in \calF_1$ such that $(c,x)$ is generic for $\phi$ has full measure in $\calF_1$. Since $\calF_1\to\calF_0$ is a smooth fibration, the statement is then a direct consequence of Lemma \ref{lem:Fubini} (2).

In order to verify that the last two conditions hold on a full measure set  as well, we use a similar argument for the fibrations $\calF_2\to \partial\HCn\times\partial\HCn$ and  $\calF_3\to \partial\HCn\times\partial\HCn$ given respectively by $(C,X,c)\mapsto (C\cap X,c)$ and 
$(C,X,x)\mapsto (C\cap X,x)$.
\end{proof}
The inductive step will be based on the following construction:
\begin{lem}\label{lem:1}
For any pair $(C,X)$ in $\calF_0$, the union
$$S=\bigcup_{\substack{c\in C\\x\in X}}\calC_{c,x}$$ contains an open and dense subset of $\partial\HCn$. 
\end{lem}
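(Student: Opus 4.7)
\emph{Reduction to a standard configuration.} The first step is to exploit the transitivity of the $\SU(1,n)$-action on $\calF_0$: any two pairs consisting of a chain and an $(n-1)$-hyperchain meeting in a single point are $\SU(1,n)$-conjugate. This lets me fix a specific convenient model. In the projective description $\HCn \subset \C\P^n$, I would take $L_C \subset \C\P^n$ a complex projective line and $H_X \subset \C\P^n$ a complex projective hyperplane whose intersection $L_C \cap H_X = \{p\}$ is a single isotropic point, chosen so that the Hermitian form has signature $(1,1)$ on $L_C$ and $(1,n-1)$ on $H_X$. Then $C = \partial\HCn \cap L_C$ and $X = \partial\HCn \cap H_X$ realize an arbitrary element of $\calF_0$.

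\emph{The map to differentiate.} I would then study the real-analytic evaluation map
\[
\Phi \colon (C \setminus \{p\}) \times (X \setminus \{p\}) \times \R \longrightarrow \partial\HCn,
\]
sending $(c,x,\tau)$ to the point at parameter $\tau$ on the chain $\calC_{c,x} = \partial\HCn \cap L_{c,x}$, where $L_{c,x}$ is the complex projective line through $c$ and $x$. The domain has real dimension $1 + (2n-3) + 1 = 2n-1$, matching that of $\partial\HCn$. The core computation is to verify that at some well-chosen point $(c_0,x_0,\tau_0)$ the differential $d\Phi$ has maximal rank $2n-1$: infinitesimally, varying $c$ along $C$ tilts $L_{c,x}$ around $x$, varying $x$ along $X$ tilts it around $c$ (giving $2n-3$ independent directions), and varying $\tau$ moves along the chain itself; together these should span $T_{y_0}\partial\HCn$, which is a finite-dimensional linear algebra check once coordinates on $\C\P^n$ are fixed.

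\emph{From the rank computation to an open dense subset.} Once $d\Phi$ is surjective at one point, the implicit function theorem shows that $\Phi$ is a local diffeomorphism there, so $\Phi$'s image contains an open set $U \subseteq S$. By real-analyticity of $\Phi$, the locus where $d\Phi$ has full rank is open and dense in the parameter space, and its image is open in $\partial\HCn$. Density of this image is then obtained by a codimension count in the spirit of Lemma~\ref{lem:0}: the incidence variety
\[
Z = \{(y,c,x) \in \partial\HCn \times C \times X \mid y,c,x \text{ are collinear in } \C\P^n\}
\]
is a real-analytic set of real dimension $2n-1$, and its first projection to $\partial\HCn$ (whose image is $S$) surjects outside a proper real-analytic subvariety, which is therefore nowhere dense; the interior of $S$ is thus an open dense subset of $\partial\HCn$.

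\emph{Main obstacle.} The bulk of the work is the explicit rank computation for $d\Phi$ at a generic point of the parameter space, which requires a careful choice of coordinates on $\C\P^n$ (for instance with $p$ as a base point, or equivalently in Heisenberg coordinates with $p$ sent to $\infty$) and a concrete parametrization of the chain $\calC_{c,x}$. The rest of the argument is routine real-analytic formalism once the rank computation is in hand.
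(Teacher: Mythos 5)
There is a genuine gap at the density step. Your dimension count and the (deferred) verification that $d\Phi$ has maximal rank at one point would only show that $S$ \emph{contains a nonempty open set}: a generically submersive real-analytic map between manifolds of equal dimension has open image near its regular points, but nothing forces that image to be dense (think of the image being a closed ball, whose boundary is a measure-zero set perfectly compatible with Sard's theorem). The assertion that the projection of the incidence variety $Z\to\partial\HCn$ ``surjects outside a proper real-analytic subvariety'' is precisely the content of the lemma, and neither real-analyticity nor the codimension count supplies it: over $\R$, a full-dimensional semialgebraic or subanalytic image need not be dense (already $x\mapsto x^2$ on $\R$ fails this), in contrast with the complex algebraic setting. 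What is actually needed is to exhibit, for every $y$ outside an explicit nowhere dense set, a chain through $y$ meeting both $C$ and $X$; your argument never produces such a chain. A secondary issue is that the ``core computation'' (the rank of $d\Phi$) is announced but not carried out, so even the open-image part of the claim is not established.

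The paper's proof avoids both problems by being entirely constructive: it passes to the Heisenberg model $\C^{n-1}\ltimes\R$ with $C\cap X$ at infinity, where $C$ becomes a vertical line over a point $p_C$, $X$ becomes $\pi^{-1}(S_X)$ for an affine subspace $S_X\subset\C^{n-1}$, and finite chains are exactly the lifts of Euclidean circles contained in complex affine lines. For any $y$ with $\pi(y)\notin\langle S_X,p_C\rangle_\R$ one writes down the circle through $p_C$, $\pi(y)$ and the intersection point $z_y$ of the line $\langle p_C,\pi(y)\rangle$ with $S_X$, and lifts it to the unique chain through $y$; this chain visibly meets $C$ and $X$, so $S$ contains the explicit open dense set $\pi^{-1}\bigl(\C^{n-1}\setminus\langle S_X,p_C\rangle_\R\bigr)$. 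If you want to salvage your strategy, you would have to replace the codimension count by an argument showing the fiber of $Z$ over $y$ is nonempty for $y$ in a dense set --- which essentially amounts to redoing the paper's construction.
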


\begin{center}
\begin{figure}
\begin{tikzpicture}[scale=.8]
%X
\draw(1,3) to (1,9) to (3,10) to (3,6);
\draw (1,3) to (3,4) to (3,5);
\node at (2.5,9){$X$};
\node at (.5,4.7){$S_X$};
%\C^{n-1}
\draw (1,5) to (3,6) to (10,6) to (8,5) to (1,5); 
\draw (1,5) to (0,5) to (1, 5.5);
\node at (8,5.5){$\C^{n-1}$};
%C
\draw (6, 10) to (6, 5.5);
\draw(6, 5) to (6,3.5);
\node at (6,9.5)[right]{$C$};
\node at(6,5.5)[right]{$p_C$};
\filldraw (6,5.5) circle[radius=1pt];
%y
\filldraw (4,8.75) circle [radius=1pt];
\node at (4,8.75)[above]{$y$};
\draw (4,8.5) ellipse (2cm and .25cm);
\filldraw (2,8.5) circle [radius=1pt];
\node at (2,8.5) [left] {$x$};
\filldraw (6,8.5) circle [radius=1pt];
\node at (6,8.5) [right] {$c$};
\node at (4,8.75)[below]{$\mathcal C$};

%E_y
\draw (4,5.5) ellipse (2cm and .25cm);
\filldraw (2,5.5) circle [radius=1pt];
\node at (2,5.5) [above] {$z_y$};
\filldraw (4, 5.75) circle[radius=1pt];
\node at (4, 5.75) [below] {$\pi(y)$};
\end{tikzpicture}
\caption{The proof of Lemma \ref{lem:1}}
\end{figure}
\end{center}
\begin{proof}
We work in the Heisenberg model for $\partial\HCn$ in which the intersection point $C\cap X$ corresponds to $\infty$. It is well known that in this model, isomorphic to $\C^{n-1}\ltimes \R$, a chain $W$ corresponds to either a vertical line or to a topological circle that  projects to an Euclidean circle $E$ contained in an affine complex line  $L\subset \C^{n-1}$. Moreover, denoting by $\pi:\C^{n-1}\ltimes \R\to\C^{n-1}$ the projection, for every Euclidean circle $E\subset\C^{n-1}$, and every point $x\in\pi^{-1}(E)$, there exists a unique chain $W$ containing $x$ and satisfying $\pi(W)=E$ \cite[Section 4.3]{Goldman}.

Since we chose the Heisenberg model in which $C\cap X$ corresponds to $\infty$, the chain $C$ corresponds to a vertical line (pre-image of the point $p_C\in\C^{n-1}$), and the $(n-1)$-hyperchain $X$ corresponds to the pre-image under $\pi$ of a $n-2$ dimensional affine subspace $S_X$ of $\C^{n-1}$. If $\langle S_X,p_C\rangle_\R$ denotes the  $\R$-affine span of the two affine subspaces of $\C^{n-1}$, we will prove that the open dense subset $$\pi^{-1}\left(\C^{n-1}\setminus \langle S_X,p_C\rangle_\R\right)$$ is contained in $S$.

Indeed, for any point $y$ in  $\C^{n-1}\ltimes \R$ such that $\pi(y)$ doesn't belong to $\langle S_X,p_C\rangle_\R$, the complex affine line determined by $\pi(y)$ and $p_C$ intersects $S_X$ in a unique point $z_y$. The three points $(p_C, z_y, \pi(y))$ are not $\R$-colinear, and determine a unique Euclidean circle $E_y$. The unique chain $\mathcal C$ projecting to $E_y$ and containing $y$ will, by construction, intersect $C$ in a point $c$ and $X$ in a point $x$, which shows that $y\in S$.
\end{proof}

We can now conclude the proof of Proposition \ref{prop:1}:
\begin{proof}[Proof of Proposition \ref{prop:1}]
We argue by induction.

In case $n=1$, we know that for almost every positively oriented triple $(x,y,z)$, the triple $(\phi(x),\phi(y),\phi(z))$ is contained in a $2p$-dimensional linear subspace of the Hilbert space  $\Hh$ (as in \S 2.2) of signature $(p,p)$. By Fubini’s theorem, one can find $x,y$ such that $\phi(x)$ and $\phi(y)$ are opposite (i.e. span a subspace of signature $(p,p)$) and for almost all $z$, $\phi(z)$ lies in the span  $\langle\phi(x),\phi(y)\rangle$. 

For the inductive step, combining Lemma \ref{lem:a} and Lemma \ref{lem:0}, we deduce that the set $\mathcal A$ of pairs $(C,X)\in\calF_0$ such that the restriction of $\phi$ to $X$ almost surely maps chains to chains, and such that all conditions of Lemma \ref{lem:0} hold for $(C,X)$, has full measure in $\calF_0$. 
In particular $\mathcal A$ is not empty, and we can chose a pair $(C,X)\in \mathcal{A}$. 

By the inductive hypothesis and Lemma \ref{lem:0} (4), there is a $np$-dimensional linear subspace $V_{p,(n-1)p}$ of $\mathcal H$ such that $\phi(C\cap X)< V_{p,(n-1)p}$ and for almost every $x\in X$, $\phi(x)< V_{p,(n-1)p}$. Let us choose a point $y$ in $C$ such that the pair $(y, C\cap X)$ is generic for $\phi$ and define
$$V_{p,np}=\langle V_{p,(n-1)p},\phi(y)\rangle.$$ 

Since the pair $(y, C\cap X)$ is generic for $\phi$, for almost every point $c\in C$, $\phi(c)<  V_{p,np}$. Since, by Lemma \ref{lem:0} (2), almost every pair $(c,x)\in C\times X$ is generic, there exist a full measure subset of $S=\bigcup \mathcal C_{c,x}$ consisting of points $s$ with $\phi(s)<  V_{p,np}$. The conclusion follows since, by Lemma \ref{lem:1}, the set $S$ contains an open dense subset of $\partial \HCn$ and hence a full measure subset of $S$ has full measure in $\partial \HCn$.  
\end{proof}

%%%%%%%%%
\subsection{Rigidity of maximal representations of complex hyperbolic lattices}
%%%%%%%%%%%%
We now have all the needed ingredients to prove our rigidity result for maximal representations of complex hyperbolic lattices.
\begin{thm}\label{thm:hyplatbndmap} Let $n\geq 2$ and let $\G<\SU(1,n)$ be a complex hyperbolic lattice, and let  $\rho:\Gamma\to\PO_\C(p,\infty)$ be a maximal representation. If there is a $\rho$-equivariant measurable map $\phi\colon \partial\HCn\to \calI_p$ then there is a finite dimensional totally geodesic Hermitian symmetric subspace $\calY\subset\Xx_\C(p,\infty)$ that is invariant by $\Gamma$. Furthermore, the representation $\Gamma\to\Isom(\calY)$ is maximal.
\end{thm}

\begin{proof}%\marginpar{Bea: I commented a lot of the proof, check that like this is correct}
%Since the representation $\rho$ is maximal, we deduce from Proposition \ref{prop:nofix} that  $\rho(\Gamma)$ has no fixed point at infinity and thus, since the rank is finite, we know that there is a minimal invariant symmetric subspace $\calY$. By definition, the action is geometrically dense on $\calY$ and thus also on its factors $\calY_i$ where $\calY=\calY_1\times\cdots\times \calY_k$.  The group $\Gamma$ has a finite index subgroup that preserves each factor and by Proposition \ref{prop:2}, we know each factor is finite dimensional Hermitian of non-compact type or some $\Xx_\C(q,\infty)$ and the representation to the isometry group of each factor is still maximal. 

%So, to prove the result it suffices to consider the case  of a maximal geometrically dense representation  $\rho:\G\to \OC(q,\infty)$. By geometric density, we know that there exists a measurable, transverse, $\Gamma$-equivariant map $\phi\colon\partial\HCn\to\Ii_q(q,\infty)$ (Theorem \ref{Shilov}). 
By Corollary~\ref{chainstochains} and Proposition \ref{prop:1}, we know that the image of $\phi$ is essentially contained in the boundary of some $\Xx_\C(p,np)$. Since $\Gamma$ is countable, we can find a $\Gamma$-invariant full measure subset of $\partial\HCn$ whose image in contained in $\partial\Xx_\C(p,np)$. In particular, this copy of $\Xx_\C(p,np)$ is $\Gamma$-invariant. This concludes the proof.
\end{proof}

\begin{proof}[Proof of Theorem \ref{thm:hyplat}]Under the hypothesis of Zariski-density, a measurable $\rho$-equivariant map $\phi: \partial\HCn\to \Ii_p$ is given by Theorem \ref{Shilov}. If $p\leq 2$, we know from Proposition \ref{prop:2} that the representation $\rho$ virtually splits as a product of geometrically dense maximal representations, and therefore is enough to understand the case in which $\rho$ is geometrically dense. In this case, the existence of a measurable $\rho$-equivariant map $\phi\colon\partial\HCn\to \Ii_p$ is given by Corollary~\ref{cor:bnd_map_small_rank}.
\end{proof}
\begin{remark}
Combining the results of this paper and those of \cite{KM} one can deduce that if $\Gamma<\SU(1,n)$ is cocompact, and  $\rho:\Gamma\to \OC(p,\infty)$ is maximal then there is a  totally geodesic subspace $\Yy\subset \Xx_\C(p,\infty)$ preserved by $\rho(\Gamma)$ which is isometric to $\Xx_\C(1,n)\times \Xx_\C(p_1,\infty)\times\ldots\times \Xx_\C(p_k,\infty)$ after a suitable rescaling of the metric of the various factors, where $p_i>2$. Furthermore the induced action on $\Xx_\C(p_i,\infty)$ is maximal and geometrically dense, but not Zariski-dense. If $\Gamma<\SU(1,n)$ is non-uniform, we can deduce from \cite{Poz} the same result where possibly $\Yy$ also has some finite dimensional factors of tube type. We conjecture that indeed $\Yy=\Xx_\C(1,n)$: the absence of factors of type $\Xx_\C(p_i,\infty)$ would be implied by a positive answer to Question \ref{qu:1.5}.
\end{remark}

%It is easy to check that the canonical inclusion $\SU(1,n)\to\OOO_\R^+(2,2n)$ is not maximal and more generally Koziarz and Maubon \cite{KM} show that there if $\G<\SU(1,n)$ is a cocompact lattice, there is no maximal representation $\rho:\G\to \OOO_\R^+(2,m)$. This motivates the following conjecture:
%\begin{conjecture}
%If $\G<\SU(1,n)$ is a lattice, and $n\geq 2$, there is no maximal representation $\rho:\G\to \PO_\R(2,\infty)$.
%\end{conjecture}

%%%%%%%%%%%%%%%%%

%%%%%%%%%%%%%%
\section{Maximal representations in $\PO_\R(2,\infty)$}\label{sec:bndcohomo}

In this section, %we will prove Theorem \ref{prop:surface}, 
we will construct the example of Theorem \ref{thm:dense}. In this section we focus on lattices $\G_\Sigma<{\rm PSL}(2,\R)=\PU(1,1)$. A finite index subgroup of $\G_\Sigma$ is then the fundamental group of a Riemann surface of negative Euler characteristic.

%(we record $\Sigma$ in our notation to mark the difference with the case of lattices in $\SU(1,n)$ for $n\geq 2$ that occupies the last section).  Our first result here is a rigidity result for maximal representations $\rho:\G_\Sigma\to \PO_\C(p,\infty)$: all maximal representations of fundamental groups of surfaces in $\PO_\C(p,\infty)$ admitting an equivariant boundary map stabilize a finite dimensional symmetric subspace, and therefore, up to a unitary character of $\G_\Sigma$, agree with classical maximal representations.   In the present setting $\Ii_1(1,1)=\mathbf S^1$ and the cocycle $\beta_{\Ii_1(1,1)}$ is just the orientation cocycle.\\

In order to construct geometrically dense maximal representations, we need to recall a bit of the geometry of $\calX_\R(2,\infty)$ and of the specific  boundary where the Bergmann cocycle is defined. %\footnote{We didn’t use the term Shilov boundary in infinite dimension.}. 
 Recall from Section \ref{sec:bergmann} that %we call  \emph{Shilov boundary} of $\OR(2,\infty)$\footnote{I realize that in our notation, the field is forgotten. It is maybe misleading. Maybe we can add the filed to the notation or write in the definition that is understood and should not induce ambiguity.}, the set of isotropic lines $\Ii_1(2,\infty)$, and 
the Bergmann cocycle
$\beta_\R:\Ii_1(2,\infty)^3\to \{-2,0,2\}$ induces a $\OOO_\R^+(2,\infty)$-invariant partial cyclic ordering on the set of isotropic lines $\Ii_1(2,\infty)$: we say that $(x,y,z)$ is \emph{positively oriented} if and only if $\beta_\R(x,y,z)=2$. This is a consequence of the fact that $\beta_\R$ is a cocycle, and hence if $\beta_\R(x,y,z)=2$ and $\beta_\R(x,z,t)=2$,  then  necessarily $\beta_\R(x,y,t)=2$ and $\beta_\R(y,z,t)=2$. We say that a triple $(x,y,z)\in\Ii_1(2,\infty)$ is \emph{maximal} if  $\beta_\R(x,y,z)=2$. It is easy to check that maximal triples form a single  $\OOO^+_\R(2,\infty)$-orbit. More generally, we say that an $n$-tuple $(x_1,\ldots,x_n)$ is \emph{maximal} if every  subtriple $(x_i,x_j,x_k)$, with $i\leq j\leq k$, is. Furthermore, given an opposite pair $(x,z)\in\Ii_1(2,\infty)$ we denote by $I_{x,z}$ the \emph{interval} with endpoints $(x,z)$:
$$I_{x,z}=\{y\in \Ii_1(2,\infty)|\;(x,y,z)\text{ is maximal}\}.$$
The following property of intervals will be useful:
\begin{prop}\label{prop:Iconv}
Let $x,y$ be a pair of opposite points in $\cal{I}_1(2,\infty)$. The interval $I_{x,y}$ is homeomorphic to a bounded convex subspace of a Hilbert space. 
\end{prop}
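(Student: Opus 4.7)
The plan is to produce an explicit homeomorphism between $I_{x,y}$ and a bounded convex open subset of a Hilbert space. Fix representatives $\bar x,\bar y\in\calH$ with $Q(\bar x,\bar y) = 1$, which is possible since $x$ and $y$ are opposite isotropic lines, and let $W = \langle \bar x,\bar y\rangle^\perp\subset\calH$, so that $\calH = \langle \bar x,\bar y\rangle\oplus W$ is a $Q$-orthogonal decomposition and $W$ is a real Hilbert space on which $Q$ restricts to a non-degenerate form of signature $(1,\infty)$.

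First I would parametrise the natural neighbourhood of $I_{x,y}$ consisting of lines $z$ opposite to both $x$ and $y$ by an open subset of $W$. Any such $z$ admits a unique representative $\bar z = \bar x + b\bar y + w$ with $b\in\R$ and $w\in W$; the isotropy equation $Q(\bar z,\bar z) = 0$ forces $2b + Q(w,w) = 0$, and transversality to $x$ forces $b\neq 0$, hence $Q(w,w)\neq 0$. Since $\langle x,y,z\rangle = \langle \bar x,\bar y,w\rangle$, the restriction of $Q$ to $\langle x,y,z\rangle$ has signature $(2,1)$ precisely when $Q(w,w) > 0$, which is exactly the condition $\beta_\R(x,z,y)\neq 0$ recalled in Section~\ref{sec:bergmann}. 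Thus $z\mapsto w$ is a bijection between $\{z\in\calI_1(2,\infty)\mid \beta_\R(x,z,y)\neq 0\}$ and the open double cone $C = \{w\in W\mid Q(w,w)>0\}\subset W$, which splits into two connected convex components swapped by $w\mapsto -w$. The explicit description of $\beta_\R$ given in Section~\ref{sec:bergmann} shows that the sign of $\beta_\R(x,z,y)$ depends continuously on $w$ and is flipped by $w\mapsto -w$, so $I_{x,y}$ is in bijection with exactly one of the two components, which I denote $C_+$. The bijection and its inverse are given by continuous linear formulas, hence define a homeomorphism between $I_{x,y}$ (with the topology inherited from the Grassmannian of $\calH$) and $C_+$ (with the Hilbert norm topology).

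Finally I would identify $C_+$ with a bounded convex open subset of a Hilbert space. Choose $e\in C_+$ with $Q(e,e) = 1$ (replacing $e$ by $-e$ if necessary) and decompose $W = \R e\oplus W_-$ $Q$-orthogonally; then $W_- = e^\perp\cap W$ is negative definite, so it is a real Hilbert space for the inner product $-Q|_{W_-}$ whose associated norm I denote $\|\cdot\|$. Any $w\in C_+$ writes uniquely as $w = te + v$ with $t>0$ and $\|v\|<t$, and the convexity of $C_+$ follows from the triangle inequality. The map
\[
\Psi\colon C_+\longrightarrow \R\oplus W_-,\qquad te+v\longmapsto\left(\frac{t}{1+t},\,\frac{v}{1+t}\right),
\]
with continuous inverse $(s,u)\mapsto (se+u)/(1-s)$, is a homeomorphism onto $\Omega = \{(s,u)\in(0,1)\times W_-\mid \|u\|<s\}$, which is bounded and convex in the Hilbert space $\R\oplus W_-$. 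Composing this homeomorphism with the one from the previous paragraph yields the statement.

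The only step requiring some genuine care is verifying that $I_{x,y}$ corresponds to exactly one connected component of $C$ rather than to both; this is where the orientation part of the definition of $\beta_\R$ enters, through the sign of the oriented triple of projections to the positive-definite two-plane $\langle e_1,e_2\rangle$ appearing in the explicit formula from Section~\ref{sec:bergmann}. Everything else is a direct linear algebra computation.
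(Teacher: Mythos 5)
Your proof is correct, but it follows a genuinely different route from the paper's. The paper fixes the very concrete representatives $\ov x=e_1+e_3$, $\ov y=-e_1+e_3$, normalizes $\|\ov z\|=\sqrt 2$ with respect to the Hilbert inner product, and splits $\ov z=u+v$; the three defining conditions of $I_{x,y}$ then collapse in one stroke to $u_1^2+\|v'\|^2<1$ with $u_2>0$, $v_3>0$, i.e.\ directly to the open unit ball of $\R\oplus\langle e_1,e_2,e_3\rangle^\perp$. You instead work in the $Q$-adapted affine chart $\ov z=\ov x+b\ov y+w$, $w\in\langle\ov x,\ov y\rangle^{\perp_Q}$, identify the locus of signature $(2,1)$ with the double cone $\{Q(w,w)>0\}$ of a form of signature $(1,\infty)$, argue that $I_{x,y}$ is exactly one nappe $C_+$ because $w\mapsto -w$ is an orientation-reversing isometry fixing $x$ and $y$ and hence flips the sign of $\beta_\R$, and only then pass to a bounded convex set via the projective-type map $te+v\mapsto(t/(1+t),v/(1+t))$. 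Your version is more intrinsic and makes visible the causal-cone structure underlying the interval combinatorics used later (e.g.\ Propositions \ref{prop:intervals} and \ref{prop:interval_closure}), at the price of an extra step and of having to know how $\beta_\R$ transforms under elements of $\OOO_\R(2,\infty)\setminus\OOO_\R^+(2,\infty)$; the paper's version gets the bounded convex model in a single normalization. Two cosmetic points you should fix: the chart maps $w\mapsto z$ and $z\mapsto w$ are continuous but not \emph{linear} (the coefficient $b=-Q(w,w)/2$ is quadratic in $w$); and your normalization $Q(\ov x,\ov y)=1$ is the opposite of the convention $Q(\ov x,\ov y)<0$ under which the orientation part of the formula for $\beta_\R$ in Section \ref{sec:bergmann} is stated, so when you invoke that formula you should replace $\ov y$ by $-\ov y$ (this does not affect the argument, since the sign flip under $w\mapsto-w$ is what you actually use).
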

\begin{proof}
Recall from Section \ref{sec:bergmann} that given two opposite isotropic lines $x,y\in\Ii_1(2,\infty)$ of which we choose representatives $\ov x,\ov y$ such that $Q(\ov x,\ov y)<0$, the interval $I_{x,y}$ consists of the isotropic subspaces
$$I_{x,y}=\{z\in\Ii_1(2,\infty)|\;Q(\ov x,\ov z)<0, Q(\ov y,\ov z)<0, \text{ and } \bor ([\ov x],[\ov y],[\ov z])=+\}$$
indeed the expression of the restriction of the quadratic form to the subspace $x,y,z$ is represented, with respect to that basis $\{\ov x, \ov y, \ov z\}$ by the matrix
$$\left(\begin{smallmatrix} 0& Q(\ov x,\ov y)& Q(\ov x,\ov z)\\Q(\ov x,\ov y)&0&Q(\ov y,\ov z)\\Q(\ov x,\ov z)&Q(\ov y,\ov z)&0\end{smallmatrix}\right)$$
whose determinant, $2Q(\ov x,\ov y)Q(\ov x,\ov z)Q(\ov y,\ov z)$ is negative if and only if the signs of $Q(\ov x,\ov z)$ and $Q(\ov y,\ov z)$ are equal and can be chosen negative.

Without lost of generality, we can find a Hilbert basis $(e_i)_{i\in\N}$ which is orthogonal for $Q$, such that $Q(e_1)=Q(e_2)=1$, $Q(e_i)=-1$ for $i\geq 3$ and such that $x,y$ have representatives $\overline{x}=e_1+e_3$ and $\overline{y}=-e_1+e_3$. For $z\in\mathcal{I}_1(2,\infty)$, let $\overline{z}$ be a representative of $z$ such that $||\overline{z}||=\sqrt{2}$ (here the norm $||\cdot||$ is computed with respect to the scalar product $\langle\ ,\ \rangle$ defining the Hilbert space $\mathcal H$). We can  write  $\overline{z}=u+v$ with $||u||=||v||=1$, $u$ in the span of $\{e_1,e_2\}$ and $v$ in the orthogonal of $\{e_1,e_2\}$. If we write $u=u_1e_1+u_2e_2$ and $v=v_3e_3+v’$ with $v’\bot e_3$, then the requirements $Q(\ov x,\ov z)<0$ and $Q(\ov y,\ov z )<0$ are both satisfied if and only if $v_3>|u_1|$, furthermore in this case $\bor([\ov x],[\ov z],[\ov y])=+$ if and only if $u_2>0$. In particular $\Ii_1(2,\infty)$ is homeomorphic to the pairs $(u_1,v')$ with $|v'|^2+|u_1|^2<1$ which is a bounded convex subset of a Hilbert space.   
% then $z\in I_{x,y}$ if and only if $u_2,v_3>0$, $v_3>|u_1|$ and $u_2>|v’|$. In particular, $u_2+v_3\geq1$.
%We denote by $\calA$ the affine hyperplane $\{w\in\mathcal{H}|\; \langle w,e_2+e_3\rangle =1\}$. For any $z\in I_{x,y}$, the intersection $\calA\cap z$ is a point and $\{\calA\cap z|\; z\in I_{x,y}\}$ is a bounded, convex subspace of $\calA$.
 \end{proof}

In analogy with the finite dimensional case, we say that an element $g\in\OR^+(2,\infty)$ is \emph{Shilov-hyperbolic} if it has an attractive line in $\Ii_1(2,\infty)$, or equivalently if  $g$ has a real eigenvalue $\lambda_1(g)$ of absolute value strictly bigger than one and multiplicity one (observe that $g$ has at most two eigenvalues of absolute value bigger than one, and in this case we denote by $\lambda_1(g)$ the eigenvalue with highest absolute value). If $g$ is Shilov-hyperbolic, we denote by $g^+\in\Ii_1(2,\infty)$ the eigenline corresponding to $\lambda_1(g)$ and by $g^-\in\Ii_1(2,\infty)$ the eigenline corresponding to $\lambda_1(g)^{-1}$.

In order to carry out our construction of geometrically dense maximal representation, we will need the following result, which ensures a good nesting property of the images of intervals under the action of Shilov hyperbolic elements:
\begin{prop}\label{prop:intervals}

Given a Shilov-hyperbolic element $g$ and a maximal 4-tuple $(x,y,z,t)\in\Ii_1(2,\infty)$ such that also $(x,y,g^+,z,t,g^-)$ is maximal, there exists $n\in\N$ such that $(y,g^nx,g^+,g^nt,z)$ is maximal as well.

%Let $g\in\OR(2,\infty)$ be Shilov hyperbolic. For every $(x,y)$ in $\Ii_1(2,\infty)$ such that $(x,g^+,y,g^-)$ is maximal we have
%$$\bigcap_n I_{g^nx,g^ny}=\{ g^+\}.$$
\end{prop}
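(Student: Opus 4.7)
Plan: My strategy is to exploit the attracting dynamics of $g$ on $\Ii_1(2,\infty)$ together with local constancy of the Bergmann cocycle $\beta_\R$ on the open locus of pairwise opposite triples.

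First, I claim that $g^n x \to g^+$ and $g^n t \to g^+$ in the projective Hilbert topology. Decomposing $\calH = \Span(\overline{g^+}, \overline{g^-}) \oplus W$ orthogonally for $Q$ (with $W$ of signature $(1,\infty)$), Shilov-hyperbolicity of $g$ means that $g$ preserves this decomposition, acts as $\mathrm{diag}(\lambda, \lambda^{-1})$ on the first summand where $\lambda := \lambda_1(g)$ has $|\lambda|>1$, and restricts to $W$ as an operator of norm strictly less than $|\lambda|$; this last point is equivalent to $g^+$ being attracting in $\Ii_1(2,\infty)$. Since the six-tuple $(x,y,g^+,z,t,g^-)$ is maximal, all pairs among its terms are opposite (maximality forces $Q$ restricted to the span of any three to be non-degenerate of signature $(2,1)$, which forces each pair to span a non-degenerate $(1,1)$ plane); in particular $x$ and $t$ are opposite to $g^+$, so any lift has non-zero $\overline{g^+}$-component. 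Rescaling $g^n\overline x$ by $\lambda^{-n}$ then gives convergence to $\overline{g^+}$, and analogously for $t$.

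Next, $g$-invariance of $\beta_\R$ yields $\beta_\R(g^-, g^n x, g^+) = \beta_\R(g^-, x, g^+) = 2$ for every $n$, so $g^n x$ remains in the open arc $I_{g^-, g^+}$ (the one containing $y$); similarly $g^n t \in I_{g^+, g^-}$. The cocycle $\beta_\R$, defined via the signature of $Q|_{\langle\cdot,\cdot,\cdot\rangle}$ and an orientation datum, is locally constant on the open subset of pairwise opposite triples of $\Ii_1(2,\infty)^3$. The limit triples $(g^-, y, g^+)$ and $(g^-, g^+, z)$ are pairwise opposite sub-triples of the hypothesis with $\beta_\R = 2$; combined with the convergence above and the openness of pairwise opposition, this yields, for $n$ large, $\beta_\R(g^-, y, g^n x) = 2$ and $\beta_\R(g^-, g^n t, z) = 2$. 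Applying the cocycle relation $d\beta_\R = 0$ to the 4-tuples $(g^-, y, g^n x, g^+)$ and $(g^-, g^+, g^n t, z)$ then gives $\beta_\R(y, g^n x, g^+) = 2$ and $\beta_\R(g^+, g^n t, z) = 2$.

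Maximality of the 5-tuple $(y, g^n x, g^+, g^n t, z)$ means $\beta_\R = 2$ on all ten of its sub-triples. Besides the hypothesis-given $(y, g^+, z)$ and the two triples just handled, the remaining seven either limit to non-degenerate maximal sub-triples of the hypothesis and are handled by direct continuity of $\beta_\R$ on the pairwise opposite locus (for instance $(y, g^n x, z) \to (y, g^+, z)$), or are reduced to already-known values via further 4-point cocycle identities (for instance $\beta_\R(g^n x, g^+, g^n t)$ is obtained from the 4-tuple $(g^-, g^n x, g^+, g^n t)$ together with the values $\beta_\R(g^-, g^n x, g^+) = 2$, $\beta_\R(g^-, g^+, g^n t) = \beta_\R(g^-, g^+, t) = 2$, and $\beta_\R(g^-, g^n x, g^n t) = \beta_\R(g^-, x, t) = 2$). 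The main obstacle is the absence of local compactness, which precludes routine compactness arguments; however the explicit Hilbert-space convergence $g^n x \to g^+$ combined with the openness of pairwise opposition in $\Ii_1(2,\infty)^2$ suffices to make every continuity step go through.
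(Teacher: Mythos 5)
Your argument is correct and takes essentially the same route as the paper: both hinge on the spectral‑gap convergence $g^nx,g^nt\to g^+$ (available because lifts of $x$ and $t$ have non‑vanishing $\ov{g}^+$-component) followed by continuity of $\beta_\R$ near the non‑degenerate limit triples coming from the maximal $6$-tuple; the paper just carries out the final verification by an explicit angle computation in adapted coordinates, whereas you package it through $g$-invariance and the cocycle relation, which is equivalent. Two harmless imprecisions: the non‑vanishing of the $\ov{g}^+$-component of $\ov x$ is equivalent to $x$ being opposite to $g^-$, not to $g^+$ (since $x^+Q(\ov{g}^+,\ov{g}^-)=Q(\ov x,\ov{g}^-)$) --- both oppositions are supplied by the maximal $6$-tuple, so nothing breaks; and Shilov-hyperbolicity only controls the spectral radius of $g$ on the $Q$-orthogonal complement of $\ov{g}^+$ (a second eigenvalue of modulus $>1$, or a parabolic part of large operator norm, may occur), not its operator norm, but $\|(g|_W)^n\|^{1/n}\to\rho(g|_W)<|\lambda_1(g)|$ still yields the projective convergence you need.
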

\begin{proof}
As in the proof of Proposition \ref{prop:Iconv}, we fix a Hilbert basis of $\mathcal H$ such that $Q(e_1,e_1)=Q(e_2,e_2)=1$, and $Q(e_i,e_i)=-1$ for all $i\geq 3$.
Since the group $\OR(2,\infty)$ acts transitively on pairs of opposite isotropic lines  we can, without loss of generality, assume that $\ov g^+=e_1+e_3$, and $\ov g^-=-e_1+e_3$. Since the 6-tuple $(x,y,g^+,z,t,g^-)$ is maximal, we can fix, for every $w\in\{x,y,z,t\}$ a lift $\ov w$ of the form $(\cos \theta_w,\sin\theta_w, w_3,\ldots)$ with the additional requirements that $\sum_{i\geq 3} w_i^2=1$ (because $w$ defines an isotropic line), and that $w_3>0$. Furthermore, since the restriction of $Q$ to $\langle w,g^+,g^-\rangle$ has signature $(2,1)$, we deduce that $Q(\ov x, \ov g^+)$ and $Q(\ov x,\ov g^-)$ have the same sign, and in particular $w_3>|\cos\theta_w|$. Finally, the maximality of the 6-tuple $(x,y,g^+,z,t,g^-)$ implies that $$-\pi <\theta_x<\theta_y<0<\theta_z<\theta_t<\pi.$$ %For the same reason, every vector $s\in I_{x,y}$ has a lift with  the same properties and such that $\theta_x<\ŧheta_s<\theta_y$. 

We  decompose each vector $\ov w$ in the relevant eigenspaces for $g$: $\ov{w}=w^+\ov{g}^++w^-\ov{g}^-+w_0$ (where the vector $w_0$ is orthogonal to $\langle g^+,g^-\rangle$). Observe that $w^+=(w_3+\cos\theta_w)/2$ and $w^-=(w_3-\cos\theta_w)/2$. Since we know that  $w_3>|\cos\theta_w|$, we deduce that $w^+\neq 0$ and since $\|g^n(w^+\ov{g}^+)\|/\|g^n(\ov{w}-w^+\ov{g}^+)\|\geq\left|\lambda_1(g)/\lambda_2(g)\right|^n$, where $\lambda_2(g)$ is the second maximal eigenvalue (possibly of absolute value 1), we can find $n$ big enough such that  $\theta_y<\theta_{g^nx}<0<\theta_{g^nt}<\theta_z$. 
Up to consider $g^2$ instead of $g$, we may assume that $\lambda_1(g)>0$ and since $w_3>0$, we have $g^n\ov w/\|g^n\ov w\|\to\ov{g}^+/\|\ov{g}^+\|$. So by continuity of $Q$, $Q(g^n\ov{w},y)$ has the same sign as $Q(\ov{g}^+,y)$ for $n$ large enough. 

Hence we can find $n$ such that the restriction of $Q$ to $\langle y,g^nx,g^+\rangle$ has signature $(2,1)$ and  $\theta_y<\theta_{g^nx}<0$ which implies that the orientation $\bor ([\ov y],[g^n \ov x], [\ov{g}^+])$ is positive. For such $n$, the triple $(y,g^nx,g^+)$ is maximal and similarly, we can also assume, up to possibly further enlarging $n$, that $(g^+,g^nt,z)$ is maximal for $n$ large enough.
Together with the fact that $(g^nx,g^+,g^nt)$ is maximal for any $n$, this is enough to guarantee that $(y,g^nx,g^+,g^nt,z)$ is maximal for $n$ large enough.
\end{proof}

\begin{prop}\label{prop:dense}
There exists a maximal 4-tuple $(x,y,z,t)\in\Ii_1(2,\infty)$ and  Shilov-hyperbolic elements $g,h\in\OOO_\R^+(2,\infty)$ such that the 8-tuple $(x,h^+,y,g^+,z,h^-,t,g^-)$ is maximal and such that the group generated by $g$ and $h$ doesn't preserve any finite dimensional subspace of $\calH$.
\end{prop}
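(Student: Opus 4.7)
The plan proceeds in three steps: produce the configuration in a finite-dimensional tube-type subspace, transfer it to infinite dimension, then perturb to achieve geometric density.

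First I would fix a classical Fuchsian Schottky setup: take hyperbolic elements $a_0,b_0 \in \PSL(2,\R)$ generating a free Schottky group whose attractive and repulsive fixed points alternate on the boundary circle $\mathbf{S}^1$ in cyclic order $(a_0^+,b_0^-,a_0^-,b_0^+)$. The diagonal embedding $\PSL(2,\R)\hookrightarrow\PSL(2,\R)\times\PSL(2,\R)\cong \SO^+(2,2)$ is holomorphic, totally geodesic and tight, and the Bergmann cocycle restricts (up to a factor of $2$) to the orientation cocycle on the diagonal circle inside $\Ii_1(2,2)$, hence preserves cyclic orderings. Choosing $x_0\in(a_0^-,b_0^+)$, $y_0\in(b_0^+,a_0^+)$, $z_0\in(a_0^+,b_0^-)$ and $t_0\in(b_0^-,a_0^-)$ on this diagonal circle then produces a maximal $4$-tuple for which the $8$-tuple $(x_0,b_0^+,y_0,a_0^+,z_0,b_0^-,t_0,a_0^-)$ is maximal by construction.

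Next I would embed the signature-$(2,2)$ subspace $W_0\subset\calH$ carrying the above copy of $\SO^+(2,2)$, extending $a_0,b_0$ by the identity on $W_0^\perp$; they remain Shilov-hyperbolic in $\OOO^+_\R(2,\infty)$ with the same fixed lines. To break the invariance of $W_0$ I pick an operator $u\in\OOO^+_\R(2,\infty)$ close to the identity, acting trivially on the positive-definite part $\R^2\subset W_0$ and mixing the negative-definite part of $W_0$ with infinitely many directions of $W_0^\perp$; for instance, $u$ can be realized as an infinite product of small rotations in two-planes $\langle e_k,e_{k+1}\rangle$, where $(e_k)$ is an orthonormal sequence starting in the negative part of $W_0$ and spreading into $W_0^\perp$, with square-summable angles. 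Setting $g:=a_0$ and $h:=ub_0u^{-1}$, and using the fact that both Shilov-hyperbolicity and the maximality of an $8$-tuple are open conditions, for $u$ sufficiently close to the identity the elements $g,h$ are still Shilov-hyperbolic with $h^\pm=ub_0^\pm$ close to $b_0^\pm$, and the $8$-tuple $(x_0,h^+,y_0,g^+,z_0,h^-,t_0,g^-)$ remains maximal.

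The hard part will be ensuring that for a suitable $u$ the group $\langle g,h\rangle$ preserves no finite-dimensional subspace of $\calH$. Any finite-dimensional $g$-invariant $V$ decomposes as $V=V_1\oplus V_2$ with $V_1\subset W_0$ invariant under the diagonal $\PSL(2,\R)$-action and $V_2\subset W_0^\perp$ an arbitrary finite-dimensional subspace; joint invariance under $h$ forces an analogous decomposition of $u^{-1}V$ with respect to $W_0$. Because $u$ was chosen to spread at least one direction of $W_0$ across infinitely many coordinates of $W_0^\perp$, these two constraints can be simultaneously satisfied only by $V=\{0\}$. A cleaner alternative is a Baire category argument on the space of admissible perturbations $u$: the set of $u$ for which $\langle a_0,ub_0u^{-1}\rangle$ preserves some finite-dimensional subspace is a countable union of proper closed subsets, hence meager, so a generic $u$ does the job.
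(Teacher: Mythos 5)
There is a genuine gap, and it is fatal to the construction as written. You take $g=a_0$ and $b_0$ to act as the \emph{identity} on $W_0^\perp$, and then set $h=ub_0u^{-1}$. But then $g$ fixes pointwise the $Q$-orthogonal complement $W_0^{\perp}$ and $h$ fixes pointwise $u(W_0)^{\perp}$, so both generators restrict to the identity on $\left(W_0+u(W_0)\right)^{\perp}$, a subspace of codimension at most $8$ in $\calH$ and hence infinite dimensional. Every line in that subspace is therefore invariant under $\langle g,h\rangle$, so the group preserves a great many nontrivial finite dimensional subspaces no matter how cleverly $u$ is chosen. For the same reason the Baire category alternative fails: the set of admissible $u$ for which $\langle a_0,ub_0u^{-1}\rangle$ preserves a finite dimensional subspace is \emph{all} of them, not a meager set (and, independently, the family of finite dimensional subspaces is uncountable, so the ``countable union of proper closed subsets'' claim would need justification even in a setting where the conclusion were true). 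The decomposition claim ``any finite dimensional $g$-invariant $V$ splits as $V_1\oplus V_2$ with $V_1\subset W_0$ invariant under the diagonal $\PSL_2(\R)$'' is also false: the $1$-eigenspace of $g$ already contains $W_0^\perp$ plus a $2$-plane of $W_0$, and any subspace of an eigenspace is invariant, including ones transverse to both $W_0$ and $W_0^\perp$.

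The part of your argument that works — the Schottky/ping-pong configuration in a tube-type $\SO^+(2,2)$ and the openness of Shilov-hyperbolicity and of maximality of the $8$-tuple — is the easy half; the paper gets the configuration by a similar finite dimensional reduction (it picks the $8$-tuple inside a signature $(2,4)$ subspace $\calV'$). The essential point you are missing is that \emph{both} generators must act without large fixed subspaces on the negative-definite part. The paper arranges this by letting $g$ act on an orthogonal sum of $2$-planes $\calL_i=\langle e_{2i+1},e_{2i+2}\rangle$ of the negative part by rotations of angles $\theta_i$ with $\theta_i/\pi$ irrational and pairwise distinct; this forces every $g$-invariant subspace to be a sum of a subspace of the $(2,2)$-block and some of the $\calL_i$. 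It then builds $h$ acting as a suitable $h_0\in\SO^+(2,4)$ on $\calV'$ (chosen so that no $h_0$-invariant subspace of $\calV'$ is $g$-invariant) and acting like $g$ on a complement whose basis vectors have nonzero projection on every $\R e_i$, so that any finite dimensional $h$-invariant subspace not contained in $\calV'$ would, by $g$-invariance, have to contain all the $\calL_i$ and hence be infinite dimensional. Some mechanism of this kind — rigidifying the invariant-subspace lattice of $g$ on the whole negative part, not just perturbing $b_0$ — is unavoidable, and your proposal does not supply it.
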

\begin{proof}
We decompose the Hilbert space $\calH$ as a direct sum $\calH=\cal V\oplus \cal W$ where $\cal V$ and $\cal W$ are orthogonal with respect to $Q$, the restriction of $Q$ to $\cal V$ has signature $(2,2)$ and $(\calW,-Q|_\calW)$ is a Hilbert space with Hilbert basis $(e_i)_{i\geq 3}$. We choose an element $g\in\OOO_\R^+(2,\infty)$ that induces a Shilov hyperbolic element of $\cal V$, and acts on each subspace $\cal L_i:=\langle e_{2i+1},e_{2i+2}\rangle$ of $\calW$ as a rotation of angle $\theta_i$ where $\theta_i/\pi$ are distinct irrational numbers modulo 2. Observe that the attractive (resp. repulsive) eigenlines $g^\pm$ of $g$ belong to $\Ii_1(\cal V)\subset\Ii_1(2,\infty)$. Furthermore, every invariant subspace for the $g$ action is obtained as the direct sum of a subspace of $\cal V$ and a  sum of the $\cal L_i$.

We construct a basis $\{f_1,f_2,e_1,e_2\}$ of $\cal V$ which is orthogonal for $Q$ and such that $Q(f_1,f_1)=Q(f_2,f_2)=1$, so that $Q|_{\langle e_1,e_2\rangle}$ is negative definite. Let $\calW_0=\calW \oplus \langle e_1,e_2\rangle$ (recall that the restriction of $Q$ to $\cal W$ is negative definite). Choose two independent vectors $v$ and $v'$ in $\calW_0$ whose projection on every $\R e_i$ (for $i\geq 1$) is different from 0. Let $\cal V'=\langle \cal V,v,v'\rangle$; the restriction of $Q$ to $\cal V'$ has signature $(2,4)$. Since $\cal V'$ is finite dimensional, we can choose $x,y,z,t\in \Ii_1(\cal V')=\Ii_1(2,4)$ and an isometry $h_0\in\SO^+(2,4)$ such that the 8-tuple $(x,h_0^+,y,g^+,z,h_0^-,t,g^-)$ is maximal and there is no $h_0$-invariant subspace of $\calV'\subset\calH$ that is invariant by $g$. 

%We then choose $h$ such that $h$ acts as the hyperbolic isometry $h_0$ of $V'$, and has no invariant subspace which is invariant by $g$ (this is possible since $V'$ is finite dimensional).

Let $\calW'\subset\calW$ be the orthogonal of $\calV'$, and choose a Hilbert basis of $\calW'$ consisting of vectors which have a non-trivial projection on every $\R e_i$. We  choose $h$ that acts as the hyperbolic isometry $h_0$ of $\calV'$, and $h$ acts on $\calW'$ as $g$ does on $\calW$.
The group generated by $g$ and $h$ doesn't preserve any finite dimensional subspace: since every subspace $\calZ\subset \calH$ which is invariant by  $h$ will be either contained in $\calV'$ (and then it is trivial by assumption if it is also invariant by $g$), or contains a vector whose projection on every $\R e_i$ is not trivial. But then it must contain $\calW_0$, if it is $g$-invariant, and therefore cannot be $h$-invariant.
\end{proof}
\begin{remark}\label{rem}
If $g,h$  are constructed as in the proof of Proposition \ref{prop:dense}, for every integer $n$, the pair $g^n,h^n$ satisfies the conclusion of Proposition \ref{prop:dense} as well.
\end{remark}

Given an interval $I_{a,b}$ we denote its closure by $\ov {I_{a,b}}$ for the quotient topology on the projective space $\P\calH$ coming from the Hilbert topology on $\calH$. The following property of intervals is also useful:
\begin{prop}\label{prop:interval_closure}
Assume $(a,b,c,d)\in\Ii_1(2,\infty)^4$ is maximal, then $\overline {I_{b,c}}\subset I_{a,d}$.
\end{prop}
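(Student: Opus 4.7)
The plan is to combine the cocycle identity for $\beta_\R$ with continuity of $\beta_\R$ on the open locus of pairwise opposite triples.

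First I would establish the open inclusion $I_{b,c}\subset I_{a,d}$. If $y\in I_{b,c}$, then the triples $(a,b,c)$, $(a,c,d)$ and $(b,y,c)$ are all maximal, and applying the cocycle relation recalled in Section~\ref{sec:bergmann} (``if $\beta_\R(x,y,z)=\beta_\R(x,z,t)=2$ then $\beta_\R(x,y,t)=\beta_\R(y,z,t)=2$'') successively to the $4$-tuples $(a,b,y,c)$ and $(a,y,c,d)$ promotes the whole $5$-tuple $(a,b,y,c,d)$ to a maximal one. In particular $\beta_\R(a,y,d)=2$, so $y\in I_{a,d}$.

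For the closure I would take $y\in\overline{I_{b,c}}$ and choose lifts $\bar y_n\to\bar y$ in $\mathcal H$ of points $y_n\in I_{b,c}$. Since $Q$ and the orthogonal projection to $\langle e_1,e_2\rangle$ appearing in the definition of $\beta_\R$ in Section~\ref{sec:bergmann} are continuous in the lifts, $\beta_\R$ is locally constant on the open set of pairwise opposite triples in $\mathcal I_1(2,\infty)^3$. Hence, as soon as $(a,y,d)$ is pairwise opposite, continuity yields $\beta_\R(a,y,d)=\lim_n\beta_\R(a,y_n,d)=2$ and therefore $y\in I_{a,d}$.

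The main obstacle is thus to rule out the degenerate limit possibilities, namely $y\in\{a,d\}$, or $y\notin\{a,d\}$ but $Q(\bar a,\bar y)=0$ or $Q(\bar d,\bar y)=0$. When $y=a$, continuity of $\beta_\R$ at the pairwise opposite triple $(b,a,c)$ forces $\beta_\R(b,y_n,c)\to\beta_\R(b,a,c)=-\beta_\R(a,b,c)=-2$, contradicting $\beta_\R(b,y_n,c)=2$; the case $y=d$ is symmetric. In the remaining case, say $Q(\bar a,\bar y)=0$ with $y\neq a$, I would argue that if $(b,y,c)$ were itself pairwise opposite the same continuity argument would give $\beta_\R(b,y,c)=2$, so that $y\in I_{b,c}\subset I_{a,d}$, contradicting $Q(\bar a,\bar y)=0$; hence at least one of $Q(\bar b,\bar y),Q(\bar c,\bar y)$ must also vanish and the whole configuration $(a,b,c,d,y)$ lies in the at most five-dimensional subspace $\langle\bar a,\bar b,\bar c,\bar d,\bar y\rangle\subset\mathcal H$. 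Inside this subspace the statement reduces to its classical finite dimensional analog for the Shilov boundary of $\OR(2,k)$, which holds by compactness and the cyclic order structure. The delicate step is this last reduction to finite dimensions, since $\bar y_n$ itself need not lie in the subspace; closing the loop requires comparing the Gram matrix of $(\bar a,\bar b,\bar c,\bar d,\bar y_n)$ with its limit and exploiting upper semi-continuity of the number of positive eigenvalues of the restricted form to propagate the finite dimensional contradiction back to the sequence.
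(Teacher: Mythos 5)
Your opening moves are sound: the cocycle identity does give the open inclusion $I_{b,c}\subset I_{a,d}$, the cocycle $\beta_\R$ is indeed locally constant on the open set of pairwise opposite triples (the Gram determinant $2Q(\ov x,\ov y)Q(\ov x,\ov z)Q(\ov y,\ov z)$ is nonzero there, so both the signature and the orientation are locally constant), and the exclusion of $y\in\{a,d\}$ by continuity at the opposite triple $(b,a,c)$ works. But the argument collapses exactly where the proposition has its content: you must show that a limit $y$ of points $y_n\in I_{b,c}$ cannot satisfy $Q(\ov a,\ov y)=0$ or $Q(\ov d,\ov y)=0$, and your treatment of this case is not a proof. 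The reduction to the five-dimensional subspace $\langle\ov a,\ov b,\ov c,\ov d,\ov y\rangle$ does not apply, because the hypothesis ``$y\in\overline{I_{b,c}}$'' is a statement about approximation by points $y_n$ that do not lie in that subspace, so the finite dimensional analogue of the proposition says nothing about $y$; and the semicontinuity you invoke to repair this is stated in the wrong direction (the number of \emph{positive} eigenvalues of a symmetric matrix is lower semicontinuous, not upper semicontinuous) and is in any case never shown to produce a contradiction. Note also that the degenerate configuration you are trying to exclude is not ruled out by signature considerations alone: an isotropic line $Q$-orthogonal to both $\ov a$ and $\ov b$ genuinely exists in $\langle\ov a,\ov b\rangle^{\perp_Q}$, which has signature $(1,\infty)$; what has to be proved is that no such line is a limit of points of $I_{b,c}$, and that is precisely the missing step.

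The paper avoids this entirely by a direct computation: it normalizes $\ov b=e_1+e_3$, $\ov c=-e_1+e_3$, writes every $t\in\overline{I_{b,c}}$ as $\ov t=(\cos\theta_t,\sin\theta_t,v_t,w^t)$ with the \emph{closed} constraints $v_t\geq|\cos\theta_t|$, $\sin\theta_t\geq\|w^t\|$, and the points $a,d$ with the corresponding \emph{strict} constraints, and then verifies $Q(\ov a,\ov t)<0$ by splitting it into the two groupings $\cos\theta_a\cos\theta_t-v_av_t\leq 0$ and $\sin\theta_a\sin\theta_t-\langle w_a,w_t\rangle\leq 0$ and checking they cannot vanish simultaneously. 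In other words, the strict opposition of $\overline{I_{b,c}}$ with $a$ and $d$ is established by an explicit inequality, not by a limiting argument; some version of this quantitative input is unavoidable, and your proposal does not supply it.
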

\begin{proof}
As above we can assume without loss of generality that $\ov b=e_1+e_3$, $\ov c=-e_1+e_3$ for a Hilbert basis orthogonal for $Q$, such that $Q(e_1)=Q(e_2)=1$, $Q(e_i)=-1$ for $i\geq 3$. A generic point $t\in \ov {I_{b,c}}$ will then have a representative of the form 
$\ov t=(\cos\theta_t,\sin\theta_t,v_t,w^t_1,\ldots)$ where $w^t\in \langle e_4,\ldots\rangle$, $\|w^t\|^2+v_t^2=1$, $0\leq \theta_t\leq \pi$ and $v_t\geq|\cos\theta_t|$. A similar computation shows that the classes $a,d$ will have representatives $\ov a,\ov b$ of a similar form such that  $\|w^a\|^2+v^2_a=\|w^d\|^2+v^2_d=1$, $-\pi\leq \theta_d<\theta_a\leq 0$ and $v_a>|\cos\theta_a|$, $v_d>|\cos\theta_d|$.

In order to verify that $(a,t,d)$ is maximal it is enough to verify that $Q(\ov a,\ov t)$ and $Q(\ov d,\ov t)$ are negative (the condition with the orientation follows immediately from the analogue property for intervals in the circle): an explicit computation gives
$$Q(\ov a,\ov t)=\cos\theta_a\cos\theta_t-v_av_t +\sin\theta_a\sin\theta_t-\langle w_a,w_t\rangle<0.$$
More precisely, since $v_a>|\cos\theta_a|$ and $v_t\geq|\cos\theta_t|$
\begin{equation}\label{eq1}\cos\theta_a\cos\theta_t-v_av_t\leq0.\end{equation}
Furthermore, since $-\sin\theta_a>\| w_a\|$ and $\sin\theta_t\geq\| w_t\|$,
\begin{equation}\label{eq2}\sin\theta_a\sin\theta_t-\langle w_a,w_t\rangle<\sin\theta_a\sin\theta_t+\| w_a\|\| w_t\|\leq0\end{equation}
Observe that Equations~\eqref{eq1} and \eqref{eq2} cannot be an equality simultaneously because if Equation~\eqref{eq1} is an equality then $\theta_t=\pi/2$ and thus $\sin(\theta_t)=1$.
The verification that $Q(\ov d,\ov t)<0$ is identical, and thus the result follows.
\end{proof}

Combining Propositions  \ref{prop:intervals}, \ref{prop:dense} and \ref{prop:interval_closure}, we obtain%\footnote{A small proof is required for me. How do you define the elements $ x_1,x_2,y_2,y_1,z_1,z_2,t_2,t_1$?}:
\begin{cor}\label{cor:pingpong}
There exists a maximal 4-tuple $(x,y,z,t)$ in $\Ii_1(2,\infty)$, and a pair of Shilov-hyperbolic elements $A,B\in\OOO_\R^+(2,\infty)$ that plays ping-pong with this tuple, namely such that %with the 8-tuple, namely with the additional properties that

%There exists a maximal 8-tuple $(x_1,x_2,y_2,y_1,z_1,z_2,t_2,t_1)\in\Ii_1(2,\infty)$, and a pair of Shilov-hyperbolic elements $a,b\in\OR(2,\infty)$ that plays ping-pong with the 8-tuple, namely with the additional properties that

\begin{minipage}{.5\textwidth}
$$\left\{\begin{array}{l}
A \ov{I_{t,z}}\subset I_{x,y}\\
B \ov{I_{x,t}}\subset I_{y,z}\\
A^{-1} \ov{I_{y,x}}\subset I_{z,t}\\
B^{-1} \ov{I_{z,y}}\subset I_{t,x}.
\end{array}\right.$$ 
%$$\left\{\begin{array}{l}
%A I_{t_2,z_2}\subset I_{x_2,y_2}\\
%B I_{x_1,t_1}\subset I_{y_1,z_1}\\
%A^{-1} I_{y_2,x_2}\subset I_{z_2,t_2}\\
%B^{-1} I_{z_1,x_1}\subset I_{t_1,x_1}.
%\end{array}\right.$$ 
\end{minipage}
\begin{minipage}{.5\textwidth}
\begin{tikzpicture}[scale=0.6]
\draw (0,0) circle [radius=2];

\draw (-2,0) to (2,0);
\node at (1,0) [above] {$A$};
\node at (1,0)  {$>$};

\draw (0,-2) to (0,2);
\node at (0,1) [right] {$B$};
\node at (0,1)[rotate=90]  {$>$};

\filldraw (1.41,1.41) circle [radius=1pt];
\filldraw (-1.41,1.41) circle [radius=1pt];
\filldraw (-1.41,-1.41) circle [radius=1pt];
\filldraw (1.41,-1.41) circle [radius=1pt];
\node at (1.4,1.4) [above right]{$y$};
\node at (1.4,-1.4) [below right]{$x$};
\node at (-1.4,-1.4) [below left ]{$t$};
\node at (-1.4,1.4) [ above left]{$z$};
%\node at (1.4,1.4) [above right]{$y_1$};
%\node at (1.4,-1.4) [below right]{$x_1$};
%\node at (-1.4,-1.4) [below left ]{$t_1$};
%\node at (-1.4,1.4) [ above left]{$z_1$};

%\filldraw (1.7,1.05) circle [radius=1pt];
%\filldraw (-1.7,1.05) circle [radius=1pt];
%\filldraw (-1.7,-1.05) circle [radius=1pt];
%\filldraw (1.7,-1.05) circle [radius=1pt];
%\node at (1.7,1.4) [below right]{$y_2$};
%\node at (1.7,-1.4) [above right]{$x_2$};
%\node at (-1.7,-1.4) [above left ]{$t_2$};
%\node at (-1.7,1.4) [below left]{$z_2$};

\end{tikzpicture}
\end{minipage}
We can furthermore assume that the group generated by $A,B$ doesn't leave invariant any finite dimensional subspace of $\mathcal H$.
\end{cor}
\begin{proof}
%If we choose $(x_1,x_2,y_2,y_1,z_1,z_2,t_2,t_1)$ as $(g^{-1}x,x,y,gy,gz,z,t,g^{-1}t)$ where 
Let $g,h$ be the Shilov hyperbolic elements  and $(x,y,z,t)$ be the points  given by Proposition \ref{prop:dense}. %We deduce, with the same argument as the ones in the end of Proposition \ref{prop:intervals} that the 12-tuple $$(x_1,x_2,h^+,y_2,y_1,g^+,z_1,z_2,h^-,t_2,t_1,g^-)$$ is maximal. 
Proposition \ref{prop:intervals} implies that we can find an integer $n$ such that the pair $(A,B)=(h^n,g^n)$ plays ping pong with the 4-tuple. Morever, we can pass to the closure thanks to Proposition \ref{prop:interval_closure}. The second claim is a consequence of  Remark \ref{rem}.
\end{proof}
\begin{prop}\label{prop:mas}
Let $\Sigma$ be the once punctured torus, and let $a,b$ be the standard generators of $\G_\Sigma=\pi_1(\Sigma)$ oriented as in the picture. 

\noindent
\begin{minipage}{.4\textwidth}
Assume that $\rho:\G_\Sigma\to\OOO_\R^+(2,\infty)$ has the property that the image $\rho(aba^{-1}b^{-1})$ has a fixed point $l$ in $\Ii_1(2,\infty)$. Then 
$$2i_\rho=\beta_\R(l,\rho(a^{-1})l,\rho(ba)^{-1}l)+\beta_\R(\rho(ba)^{-1}l,\rho(b^{-1})l,l).$$
\end{minipage} 
\hspace{3.5cm}
\begin{minipage}{.3\textwidth}
\begin{tikzpicture}[scale=.6]
\draw (0,0) circle [radius=2];

\draw (-2,0) to (2,0);
\node at (1,0) [above] {$a$};
\node at (1,0)  {$>$};

\draw (0,-2) to (0,2);
\node at (0,1) [right] {$b$};
\node at (0,1)[rotate=90]  {$>$};

\draw (1.4,1.4) to (1.4,-1.4); 
\draw (-1.4,1.4) to (-1.4,-1.4); 
\draw (1.4,-1.4) to (-1.4,-1.4); 
\draw (1.4,1.4) to (-1.4,1.4); 
\draw (1.4,1.4) [dashed]to (-1.4,-1.4);
\node at (1.6,1.4) [right]{$l$};
\node at (1.4,-1.4) [right]{$\rho(b)^{-1}l$};
\node at (-1.4,-1.4) [left ]{$\rho(a^{-1}b^{-1})l$};
\node at (-1.4,1.4) [ left]{$\rho(a^{-1})l$};
\end{tikzpicture}

\end{minipage}
\end{prop}

\begin{proof}
As the (relative) bounded cohomology of a surface with a puncture and that of an homotopic surface with a boundary component are canonically isomorphic, we can realize $\Sigma$ as a surface with geodesic boundary $\partial \Sigma$. We denote by $\Hb^2(\Sigma,\R)$ the singular bounded cohomology of the topological space $\Sigma$ (namely the cohomology of the complex of bounded singular cochain), and by  $\Hb^2(\Sigma,\partial\Sigma,\R)$ the relative bounded cohomology, which is the cohomology of the complex of bounded cochains that vanishes on singular simplices with image entirely contained in $\partial \Sigma$.

It follows from \cite[Theorem 3.3]{BIW} that the Toledo invariant $i_\rho$ can be computed from the formula
$$2i_\rho=\langle j_{\partial\Sigma}^{-1}g_\Sigma\rho^*(\kappa_b),[\Sigma,\partial\Sigma]\rangle.$$
Here $g_\Sigma:\Hb^2(\G_\Sigma,\R)\to\Hb^2(\Sigma,\R)$ is the canonical isomorphism, and $j_{\partial\Sigma}^{-1}:\Hb^2(\Sigma,\R)\to\Hb^2(\Sigma,\partial\Sigma,\R)$ is the isometric isomorphism described in \cite{BBIFPP} that is inverse to the map induced by the inclusion of bounded relative cochains in bounded cochains. Recall that, whenever a base point $x\in\widetilde \Sigma$, the universal cover, is fixed, the bounded cohomology $\Hb^2(\Sigma,\R)$ can be also isometrically computed from the complex of functions on straight simplices with vertices in the set $\G_\Sigma\cdot x\subset \widetilde\Sigma$. Furthermore if $c$ is a cocycle representing the class $[c]\in\Hb^2(\Gamma_\Sigma,\R)$, the class $g_\Sigma([c])$ is represented by the cocycle 
$$\ov c(\Delta(g_0x,g_1x,g_2x))=c(g_0,g_1,g_2).$$
We denote, as in Lemma \ref{lem:5.4}, $C_\beta^l\in C^2_b(\Gamma_\Sigma,\R)$ the cocycle defined by 
$$ C_\beta^l(g_0,g_1,g_2)=\beta_\R\left(\rho(g_0)l,\rho(g_1)l,\rho(g_2)l\right)$$ 
(recall that $l\in\Ii_1(2,\infty)$ is a fixed point of $\rho(bab^{-1}a^{-1}))$).
We deduce that $\ov C_\beta^l$ vanishes on simplices contained in $\partial\Sigma$, as long as we choose $x\in\widetilde\Sigma$ in the pre-image of $\partial\Sigma$. Thus 
$$\langle j_{\partial\Sigma}^{-1}g_\Sigma\rho^*(\kappa_b),[\Sigma,\partial\Sigma]\rangle=\sum_i a_i\beta_\R(g^i_0l,g^i_1l,g^i_2l),$$
provided $\sum_i a_i\Delta(g^i_0x,g^i_1x,g^i_2x)$ represents the relative fundamental class $[\Sigma,\partial \Sigma]$.

Observe that a relative fundamental class for the once punctured torus can be written as the sum of the triangles $\Delta (x,a^{-1}x,b^{-1}a^{-1}x)$, $\Delta (x, b^{-1}a^{-1}x,a^{-1}b^{-1}x)$ and $\Delta (a^{-1}b^{-1}x,b^{-1}x,x)$, and the cocycle $\beta_\R$ vanishes on the third simplex since $\beta_\R$ is $\G_\Sigma$-equivariant and alternating, and $\Delta (x, b^{-1}a^{-1}x,a^{-1}b^{-1}x)=\Delta (abx, x,[a,b]x)$. 
The result follows.%\footnote{I don’t know anything about relative bounded cohomology and thus I can’t make comments. I will try to learn more. I don’t understant what is $\partial \Sigma$. Do you see $\Sigma$ as torus minus a point (thus an open manifold) or a torus minus an open small disk (in that case, the boundary is a circle)?}
\end{proof}

\begin{proof}[Proof of Theorem \ref{thm:dense}]
Let $A,B\in\OOO_\R^+(2,\infty)$ as given by Corollary \ref{cor:pingpong}. The group $\Gamma_\Sigma$ is a free group on two generators $a$ and $b$.
We define the representation $\rho$ by setting $\rho(a)=A$, $\rho(b)=B$. Corollary \ref{cor:pingpong} implies that $\rho(bab^{-1}a^{-1})I_{y,z}\subset I_{y,B^+}$.  

Since the interval $I_{y,z}$ is a non-empty bounded convex set of a Hilbert space (Proposition \ref{prop:Iconv}) whose closure is compact in the weak topology, we deduce using Tychonoff fixed point theorem \cite{MR1513031} (see \cite{MR1009162} for a modern proof) that the continuous function $\rho([a,b]): \ov{I_{y,z}}\to \ov{I_{y,z}}$ has a fixed point. 
%{\color{blue}It shouldn't be hard to check that $\rho([a,b])$ has a fixed point $l$ inside the interval $I_{y_1,z_1}$.} For that, we can use Tychonoff’s \textit{Fixpunktsatz} \cite{MR1513031} in an affine chart containing the interval $I_{x_2,y_2}$

Since $l$ belongs to the interval $\ov{I_{y,z}}\subset \ov{I_{y,x}}$ we have that $\rho(a^{-1})l$ belongs to the interval $I_{z,t}$ and $\rho(a^{-1}b^{-1})l=\rho(b^{-1}a^{-1})l$ belongs to the interval $I_{t,x}$. This implies that  $$\beta_\R(l,\rho(a^{-1})l,\rho(ba)^{-1}l)=2;$$ the verification that $\beta_\R(\rho(ba)^{-1}l,\rho(b^{-1})l,l)=2$ is analogous. Together with Proposition \ref{prop:mas} this shows that $i_\rho=2$, namely that the representation $\rho$ is maximal.

We conclude the proof verifying that the representation is geometrically dense. Since the representation is irreducible, there is no fixed point at infinity and thus there is a minimal totally geodesic invariant subspace, which can’t be of finite dimension because of Lemma~\ref{lem:fdtgs}. It has no Euclidean factor otherwise there would be a fixed point at infinity or a pair of such fixed points, which is impossible thanks to Proposition~\ref{prop:finite_config}. So either it is of rank 1, a product of two  rank 1 subspaces or a rank 2 subspace. Lemma \ref{lem:tight} excludes the presence of rank 1 factors and that the symmetric subspace is associated to $\OH(2,\infty)$. The closure cannot be associated to $\OC(2,\infty)$ by Theorem \ref{thm:hyplat}. Therefore the minimal totally geodesic subspace is isometric to the symmetric subspace  to $\OR(2,\infty)$, by possibly restricting to the isometry group of that subspace we can assume that the representation is geometrically dense.%\footnote{With a bit more work one can deduce from the classification \cite[Theorem 1.2]{Duc15} that the representation was already geometrically dense.}.
%Since $\rho$ is maximal, it follows from  {\bf TODO: finish}\footnote{I agree that the representation is irreducible but in higher rank this is not a priori equivalent to Geometric density (there is a discussion about this point at the end of Monod-Py).  In these cases, one know exactly what can be the subspaces ant its quite clear that a totally geodesic embedding is standard using embedding of finite dimensional totally geodesic subspaces. One should aware that in general irreducibility of the representation does not imply geometric density. I think that the embedding Sp(m,R) in SU(m,m) gives an irreducible representation but it is not Zariski-dense. So we have to argue a bit more.}
\end{proof}

\appendix
\section{Exotic actions of ${\rm PSL}_2(\R)$ on $\Xx_\R(2,\infty)$}\label{Appendix}
Delzant and Py \cite{MR2881312} initiated a geometric study of representations $\pi_s$ of $\PU(1,1)\simeq\PSL_2(\R)$ on the space $\LL^2(\SS^1,\C)$ of square integrable, complex valued functions on the circle $\SS^1=\partial \mathbf D$, seen as the boundary of the unit disk $\mathbf D$,  endowed with the angular measure ${\rm d}\theta/2\pi$. While these representations were previously studied from an algebraic point of view, they noticed that they give rise to interesting exotic actions on infinite dimensional symmetric spaces of finite rank. Despite the main interest of \cite{MR2881312} (as well as of  \cite{MR3263898}) being actions on the infinite dimensional real hyperbolic space, the  construction also gives a one parameter family of representations in $\OR(2,\infty)$. The goal of this appendix is to explicitly compute the Toledo invariant of those representations. We will show that the invariant vanishes.

We quickly recall the construction in our specific setting. We refer the reader to \cite[Section 2]{MR2881312} for more details. Let $s\in(3/2,5/2)$. The representation $\pi_s$ alluded to before is defined by
$$\pi_s(g)\cdot f=\Jac(g^{-1})^{\frac12+s} f\circ g^{-1},$$
where $\Jac(g)$ is the Jacobian of an element $g$ with respect to the measure ${\rm d}\theta$ on the circle.

If we denote by $c$ the constant function and, for every $n\in\mathbf{Z}\setminus \{0\}$, we denote by $e_n,f_n$ the functions $z\mapsto \Re(z^n), z\mapsto\Im(z^n)$ which are the real and the imaginary part of $z\mapsto z^n$ (these constitute a Hilbert basis of the space $\LL^2(\SS^1,\R)$), then the representation $\pi_s$ is not unitary, but it is shown in \cite[Proposition 2]{MR2881312} that $\pi_s$ preserves a quadratic form $Q_s$ for which the family $\{c,e_i,f_i\}$ is orthogonal and satisfies  
$$Q_s(e_n)=Q_s(f_n)=-\prod_{i=0}^{n-1} \frac{i+\frac 12-s}{i-\frac12+s}$$

and $Q(c)=-1$. It is easy to compute that, for every $s\in(3/2,5/2)$, $Q_s(e_n)<0$ if $n\neq 1$ and $Q_s(e_1)=Q_s(f_1)>0$, and hence the action of $\pi_s$ on the completion $\calH$ of $\LL^2(\SS^1,\R)$ with respect to the form $Q_s$ induces an homomorphism in $\OR(2,\infty)$.  The purpose of the section is to prove the following.
\begin{prop}\label{prop:A1}
Let $\G<\SU(1,1)$ be a torsionfree lattice, and let $\rho_s:\G\to\OR(2,\infty)$ denote the restriction to $\G$ of the  composition of the projection to $\PU(1,1)$ and  $\pi_s$. Then $\rho_s^*\kappa^b_{\OR(2,\infty)}=0$.
\end{prop}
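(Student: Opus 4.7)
The plan is to exhibit a smooth $\SU(1,1)$-equivariant map $f\colon\mathbf D\to\calX_\R(2,\infty)$ whose image is contained in a totally real totally geodesic subspace, so that $f^{*}\omega\equiv 0$, and then to read off from the differential-form description of the Toledo invariant that $i_{\rho_s}=0$.

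\textbf{Step 1 (basepoint and equivariant map).} The subgroup $K=\SO(2)\subset\SU(1,1)$ fixes $0\in\mathbf D$ and acts on $\calH$ (the $Q_s$-completion of $\LL^2(\SS^1,\R)$) by the rotations $\pi_s(r_\theta)F(e^{i\varphi})=F(e^{i(\varphi-\theta)})$, which have Jacobian one. In the Fourier decomposition each plane $V_n:=\langle e_n,f_n\rangle$ is $K$-invariant, and in the range $s\in(3/2,5/2)$ the form $Q_s$ is positive definite on $V_1$. Hence $V_1\in\mathcal V$ defines a point $o\in\calX_\R(2,\infty)$ fixed by $\pi_s(K)$, and the rule $f(gK):=\pi_s(g)\cdot o$ yields a smooth $\SU(1,1)$-equivariant map.

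\textbf{Step 2 (totally real image).} The complex structure $J_0$ on $T_o\calX_\R(2,\infty)\simeq\Hom(V_1^{\perp},V_1)$ is post-composition with the rotation $I\in\SO(V_1)$ by $90^\circ$. I would compute $d\pi_s(X)$ for $X\in\mathfrak p_{\SU(1,1)}$ explicitly; these are first-order operators in $\mathfrak o(Q_s)$ whose off-diagonal block sends $V_1^{\perp}\to V_1$ only through Fourier modes $e_0=c,e_2,f_2$, thanks to the relation $\pi_s(r_\theta)e_n=\cos(n\theta)e_n-\sin(n\theta)f_n$ and the selection rules coming from the $K$-type decomposition. Consequently the image of $df_0$ lies in $\Hom(\langle c,e_2,f_2\rangle,V_1)$, and by tracking the weights one verifies that this subspace is preserved by the involution $A\mapsto -IA$, i.e. is totally real with respect to $J_0$. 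Equivalently, the orbit $\pi_s(\SU(1,1))\cdot V_1$ is contained in a totally geodesic copy of $\calX_\R(1,\infty)$ sitting inside $\calX_\R(2,\infty)$ as a real form; on any such real form $\omega$ vanishes identically, and by $\SU(1,1)$-equivariance $f^{*}\omega=0$ everywhere on $\mathbf D$.

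\textbf{Step 3 (conclusion).} For any smooth $\rho_s$-equivariant map $f$, the standard de Rham representative of $\rho_s^{*}\kappa^b_{\OR(2,\infty)}$ on $\Gamma\backslash\mathbf D$ is $\tfrac{1}{\pi}f^{*}\omega$; integrating over a fundamental domain and dividing by its area computes the coefficient of $\kappa^{cb}_{\SU(1,1)}$ in $T_b^{*}\rho_s^{*}\kappa^b_{\OR(2,\infty)}$ (this is the Burger--Iozzi formula, which applies to lattices in $\SU(1,1)$ via the identification $\Hcb^2(\SU(1,1),\R)\cong\Hc^2(\SU(1,1),\R)$ of Burger--Monod). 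Since $f^{*}\omega=0$, the integral vanishes and therefore $T_b^{*}\rho_s^{*}\kappa^b_{\OR(2,\infty)}=0$.

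The main obstacle lies in Step 2: the explicit check that $d\pi_s(\mathfrak p_{\SU(1,1)})$, projected onto $\Hom(V_1^{\perp},V_1)$, lands in the $-1$-eigenspace of $J_0$. This rests on a careful analysis of the $K$-weight decomposition of $d\pi_s$ restricted to $\mathfrak p_{\SU(1,1)}$ and on the fact that $\pi_s$ is a real (not complex) representation, so the natural complex structure available on the target comes only from the geometry of $V_1$ and is incompatible with the real orbit structure emanating from $o$.
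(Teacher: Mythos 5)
Your overall strategy coincides with the paper's: both use the $\pi_s(K)$-fixed positive plane $V_1=\langle e_1,f_1\rangle$ to produce an equivariant map $f_s\colon\mathbf D\to\calX_\R(2,\infty)$, reduce the vanishing of the Toledo invariant to showing $f_s^*\omega_s=0$, and conclude by the Burger--Iozzi formula. The gap is in your Step 2, which is where all the content lies, and the mechanism you propose for it does not work. First, ``preserved by the involution $A\mapsto -IA$'' is not a characterization of total reality: the map $A\mapsto -IA$ is $-J_0$, it is not an involution, and a subspace preserved by it is precisely a \emph{complex} subspace, on which $\omega_s$ is nondegenerate rather than zero. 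Second, the selection rule you invoke is genuinely insufficient. The image of $df_s$ at the origin does land in $\LL(\langle c,e_2,f_2\rangle,V_1)$, and $K$-equivariance forces it into the isotypic component of $\LL(W,V_1)$ on which $K$ acts as it does on $T_0\mathbf D$; but that component is four-dimensional, equal to $\LL(\langle c\rangle,V_1)$ plus a two-dimensional piece of $\LL(\langle e_2,f_2\rangle,V_1)$, and it contains $K$-invariant $2$-planes that are \emph{complex lines} for $J_0$ --- for instance $\LL(\langle c\rangle,V_1)$ itself. So no amount of $K$-type bookkeeping alone can decide whether $df_s(T_0\mathbf D)$ is totally real; one must determine which $2$-plane it actually is. Third, the asserted equivalence with ``the orbit is contained in a totally geodesic copy of $\calX_\R(1,\infty)$'' is both unjustified and not an equivalence: pointwise total reality of $df_s$ does not imply containment in a totally geodesic real form, and the paper neither establishes nor needs such containment.

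What the paper does instead, and what your proof is missing, is the explicit computation: for the hyperbolic one-parameter group $g_t$ through $0$ one finds $\left.\frac{\rm d}{{\rm d}t}\right|_{t=0}\pi_s(g_t)\cdot z=-1+(2+2s)z^2$, from which $df_s(v)$ and $df_s(Jv)$ are identified with explicit matrices $A,B\in\LL(W,V_1)$ supported on the columns $c,e_2,f_2$; one then checks by inspection that $A$ and $IB$ have disjoint supports, hence are orthogonal for the Hilbert--Schmidt inner product, which gives $\omega_s(df_s(v),df_s(Jv))=0$ and, by equivariance, $f_s^*\omega_s\equiv 0$. Your Step 3 is fine, but without this computation (or an equally concrete substitute for the claim that $df_s(T_0\mathbf D)$ is totally real) the argument is incomplete.
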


We denote by $\Xx^s_\R(2,\infty)$ the symmetric space associated to the group preserving the form $Q_s$. Since the subgroup $\U(1)<\SU(1,1)$ fixes the positive definite subspace $x=\langle e_1,f_1\rangle\in\Xx^s_\R(2,\infty)$, we have a $\PU(1,1)$-equivariant (harmonic) map $f_s:\mathbf D=\Xx_\C(1,1)\to\Xx^s_\R(2,\infty)$ induced by the orbit map $g\mapsto gx$. Let $\omega_s$ denote the K\"ahler form of the symmetric space $\Xx^s_\R(2,\infty)$; and let us denote by $\Sigma$ the quotient $\mathbf D/\Gamma$. %It follows from \cite[Theorem 3.3]{BIW} that the Toledo invariant $i_{\rho_s}$ can be computed from the formula
%$$i_{\rho_s}2\pi\chi(\Sigma)=\int_\Sigma (f_s)^*\omega_s.$$

We prove the stronger fact that $f_s$ is a totally real equivariant harmonic map, that is 
%In particular, since $f_s$ is $\PU(1,1)$-equivariant, in order to show that the Toledo invariant $i_{\rho_s}$ vanishes, it is enough to show that 
$\omega_s (df_s( v), df_s (Jv))=0$ for some vector $v\in T_0\mathbf D$ (here $J$ denotes the complex structure of the disk, which we identify as the corresponding element in $\U(1)<\SU(1,1)$).  
  For this purpose, we consider the one parameter subgroup of hyperbolic elements 
  $$g_t=\begin{pmatrix}\cosh t&\sinh t\\\sinh t&\cosh t\end{pmatrix}$$
  whose axis contains 0. Let us denote by $\gamma:\R^+\to \mathbf D$ the geodesic $\gamma(t)=g_t\cdot 0$, and let $v=\gamma’(0)$. In order to compute the image $df_s( v)$, we will compute $\left.\frac{\rm d}{{\rm d} t}\right|_{t=0}\pi_s(g_t)\cdot x$. Observe that $\pi_s(g_t)\cdot x$ is the vector space generated by the real and imaginary part of the function $\pi_s(g_t)\cdot z$ (where, for ease of calculation, we extend the action of $\pi_s$ to the Hilbert space $\LL^2(\SS^1,\C)$).

If we denote by $a=\tanh t$ we have 
$$\pi_s(g_t)\cdot z=\Jac(g_t^{-1})^{\frac12+s}\frac{z-a}{1-az}=(1-a^2)^{\frac12+s}(z-a)\left(\sum_{n=0}^\infty a^nz^n\right)^{2+2s}$$
since
$$\Jac(g_t^{-1})=\frac{1-a^2}{(1-az)^2}.$$
Therefore, we have 
$$\left.\frac{\rm d}{{\rm d} t}\right|_{t=0}\pi_s(g_t)\cdot z=-1+(2+2s)z^2.$$
Using the notation from Section \ref{sec:Hermitian}, we may identify $T_0\calX_\R^s(2,\infty)$ with the Lie triple system $\mathfrak p=\left\{\begin{bmatrix}0&A\\^tA&0\end{bmatrix},\ A\in L(W,V)\right\}$  where $V=\langle e_1,f_1\rangle$ and $W=\langle c,e_2,f_2,\ldots\rangle$. The tangent vector $df_s( v)$ is the element in the tangent space $\mathfrak{p}$  that corresponds to the  matrix $A\in L(W,V)$ given by
$$A=\begin{pmatrix}
-1&2+2s &0 &0 &\ldots\\
0&0&2+2s &0 &\ldots
\end{pmatrix}. $$
Since the vectors $e_{2n}, f_{2n}$ are eigenvectors for $\pi_s(J)$ of eigenvalues $(-1)^n$ we get that the tangent vector  $df_s( Jv)$ corresponds to the matrix 
$$B=\begin{pmatrix}
-1&-2-2s &0&0 &\ldots\\
0& 0&-2-2s &0 &\ldots
\end{pmatrix} .$$
Denoting by $J_0$ the complex structure of $\Xx_\R^s(2,\infty)$, we have that $J_0\cdot df_s( Jv)\in\mathfrak p$ corresponds to the matrix  
$$IB=\begin{pmatrix}
0&0&-2-2s &0 &\ldots\\
1& 2+2s&0 &0 &\ldots
\end{pmatrix}. $$
Since $\left[\begin{array}{cc}0&A\\^t A&0\end{array}\right]$ and $\left[\begin{array}{cc}0&IB\\-^t( IB)&0\end{array}\right]$ are orthogonal with respect to the scalar product on $\SSS^2(\calH)$, we obtain our claim and conclude the proof of Proposition \ref{prop:A1}.
\bibliographystyle{alpha}
\bibliography{../biblio.bib}

\newcommand{\etalchar}[1]{$^{#1}$}
\begin{thebibliography}{BdlHV08}

\bibitem[AB08]{AbramenkoBrown}
Peter Abramenko and Kenneth~S. Brown.
\newblock {\em Buildings}, volume 248 of {\em Graduate Texts in Mathematics}.
\newblock Springer, New York, 2008.
\newblock Theory and applications.

\bibitem[BBF{\etalchar{+}}14]{BBIFPP}
M.~Bucher, M.~Burger, R.~Frigerio, A.~Iozzi, C.~Pagliantini, and M.~B.
  Pozzetti.
\newblock Isometric embeddings in bounded cohomology.
\newblock {\em J. Topol. Anal.}, 6(1):1--25, 2014.

\bibitem[BDL16]{BDL}
Uri Bader, Bruno Duchesne, and Jean L\'ecureux.
\newblock Furstenberg maps for {${\rm CAT}(0)$} targets of finite telescopic
  dimension.
\newblock {\em Ergodic Theory Dynam. Systems}, 36(6):1723--1742, 2016.

\bibitem[BdlHV08]{MR2415834}
Bachir Bekka, Pierre de~la Harpe, and Alain Valette.
\newblock {\em Kazhdan's property ({T})}, volume~11 of {\em New Mathematical
  Monographs}.
\newblock Cambridge University Press, Cambridge, 2008.

\bibitem[Bel05]{MR2157347}
Daniel Beltita.
\newblock Integrability of analytic almost complex structures on {B}anach
  manifolds.
\newblock {\em Ann. Global Anal. Geom.}, 28(1):59--73, 2005.

\bibitem[BF14]{BFICM}
U.~Bader and A.~Furman.
\newblock Boundaries, rigidity of representations, and {L}yapunov exponents.
\newblock {\em Proceedings of ICM 2014, Invited Lectures}, pages 71--96, 2014.

\bibitem[BH99]{BH}
Martin~R. Bridson and Andr\'e Haefliger.
\newblock {\em Metric spaces of non-positive curvature}, volume 319 of {\em
  Grundlehren der Mathematischen Wissenschaften [Fundamental Principles of
  Mathematical Sciences]}.
\newblock Springer-Verlag, Berlin, 1999.

\bibitem[BI08]{BI}
Marc Burger and Alessandra Iozzi.
\newblock A measurable {C}artan theorem and applications to deformation
  rigidity in complex hyperbolic geometry.
\newblock {\em Pure Appl. Math. Q.}, 4(1, Special Issue: In honor of Grigory
  Margulis. Part 2):181--202, 2008.

\bibitem[BI09]{MR2655315}
Marc Burger and Alessandra Iozzi.
\newblock A useful formula from bounded cohomology.
\newblock In {\em G{\'e}om{\'e}tries {\`a} courbure n{\'e}gative ou nulle,
  groupes discrets et rigidit{\'e}s}, volume~18 of {\em S{\'e}min. Congr.},
  pages 243--292. Soc. Math. France, Paris, 2009.

\bibitem[BILW05]{BILW}
Marc Burger, Alessandra Iozzi, Fran\c{c}ois Labourie, and Anna Wienhard.
\newblock Maximal representations of surface groups: symplectic {A}nosov
  structures.
\newblock {\em Pure Appl. Math. Q.}, 1(3, Special Issue: In memory of Armand
  Borel. Part 2):543--590, 2005.

\bibitem[BIW09]{BIWtight}
Marc Burger, Alessandra Iozzi, and Anna Wienhard.
\newblock Tight homomorphisms and {H}ermitian symmetric spaces.
\newblock {\em Geom. Funct. Anal.}, 19(3):678--721, 2009.

\bibitem[BIW10]{BIW}
Marc Burger, Alessandra Iozzi, and Anna Wienhard.
\newblock Surface group representations with maximal {T}oledo invariant.
\newblock {\em Ann. of Math. (2)}, 172(1):517--566, 2010.

\bibitem[BM99]{BM}
M.~Burger and N.~Monod.
\newblock Bounded cohomology of lattices in higher rank {L}ie groups.
\newblock {\em J. Eur. Math. Soc. (JEMS)}, 1(2):199--235, 1999.

\bibitem[BM02]{BM2}
M.~Burger and N.~Monod.
\newblock Continuous bounded cohomology and applications to rigidity theory.
\newblock {\em Geom. Funct. Anal.}, 12(2):219--280, 2002.

\bibitem[BW00]{MR1721403}
A.~Borel and N.~Wallach.
\newblock {\em Continuous cohomology, discrete subgroups, and representations
  of reductive groups}, volume~67 of {\em Mathematical Surveys and Monographs}.
\newblock American Mathematical Society, Providence, RI, second edition, 2000.

\bibitem[Car32]{MR1509453}
\'Elie Cartan.
\newblock Sur le groupe de la g\'eom\'etrie hypersph\'erique.
\newblock {\em Comment. Math. Helv.}, 4(1):158--171, 1932.

\bibitem[Cho69]{MR0253266}
Gustave Choquet.
\newblock {\em Cours d'analyse. {T}ome {II}: {T}opologie. {E}spaces
  topologiques et espaces m{\'e}triques. {F}onctions num{\'e}riques. {E}spaces
  vectoriels topologiques}.
\newblock Deuxi{\`e}me {\'e}dition, revue et corrig{\'e}e. Masson et Cie,
  {\'E}diteurs, Paris, 1969.

\bibitem[CL10]{CL}
Pierre-Emmanuel Caprace and Alexander Lytchak.
\newblock At infinity of finite-dimensional {CAT}(0) spaces.
\newblock {\em Math. Ann.}, 346(1):1--21, 2010.

\bibitem[Cle02]{MR1923417}
Jean-Louis Clerc.
\newblock A triple ratio on the unitary {S}tiefel manifold.
\newblock {\em Enseign. Math. (2)}, 48(1-2):51--71, 2002.

\bibitem[Cle04]{MR2077243}
Jean-Louis Clerc.
\newblock The {M}aslov triple index on the {S}hilov boundary of a classical
  domain.
\newblock {\em J. Geom. Phys.}, 49(1):21--51, 2004.

\bibitem[Cle07]{Clerc}
Jean-Louis Clerc.
\newblock An invariant for triples in the {S}hilov boundary of a bounded
  symmetric domain.
\newblock {\em Comm. Anal. Geom.}, 15(1):147--173, 2007.

\bibitem[CM09]{MR2574741}
Pierre-Emmanuel Caprace and Nicolas Monod.
\newblock Isometry groups of non-positively curved spaces: discrete subgroups.
\newblock {\em J. Topol.}, 2(4):701--746, 2009.

\bibitem[dlH72]{MR0476820}
Pierre de~la Harpe.
\newblock {\em Classical {B}anach-{L}ie algebras and {B}anach-{L}ie groups of
  operators in {H}ilbert space}.
\newblock Lecture Notes in Mathematics, Vol. 285. Springer-Verlag, Berlin-New
  York, 1972.

\bibitem[DP12]{MR2881312}
Thomas Delzant and Pierre Py.
\newblock K\"ahler groups, real hyperbolic spaces and the {C}remona group.
\newblock {\em Compos. Math.}, 148(1):153--184, 2012.

\bibitem[DS88]{MR1009162}
Nelson Dunford and Jacob~T. Schwartz.
\newblock {\em Linear operators. {P}art {I}}.
\newblock Wiley Classics Library. John Wiley \& Sons, Inc., New York, 1988.
\newblock General theory, With the assistance of William G. Bade and Robert G.
  Bartle, Reprint of the 1958 original, A Wiley-Interscience Publication.

\bibitem[DT87]{DT}
Antun Domic and Domingo Toledo.
\newblock The {G}romov norm of the {K}aehler class of symmetric domains.
\newblock {\em Math. Ann.}, 276(3):425--432, 1987.

\bibitem[Duc13]{Duc13}
Bruno Duchesne.
\newblock Infinite-dimensional nonpositively curved symmetric spaces of finite
  rank.
\newblock {\em Int. Math. Res. Not. IMRN}, (7):1578--1627, 2013.

\bibitem[Duc15a]{Duc15}
Bruno Duchesne.
\newblock Infinite dimensional {R}iemannian symmetric spaces with fixed-sign
  curvature operator.
\newblock {\em Ann. Inst. Fourier (Grenoble)}, 65(1):211--244, 2015.

\bibitem[Duc15b]{MR3343349}
Bruno Duchesne.
\newblock Superrigidity in infinite dimension and finite rank via harmonic
  maps.
\newblock {\em Groups Geom. Dyn.}, 9(1):133--148, 2015.

\bibitem[Fur73]{MR0352328}
Harry Furstenberg.
\newblock Boundary theory and stochastic processes on homogeneous spaces.
\newblock In {\em Harmonic analysis on homogeneous spaces ({P}roc. {S}ympos.
  {P}ure {M}ath., {V}ol. {XXVI}, {W}illiams {C}oll., {W}illiamstown, {M}ass.,
  1972)}, pages 193--229. Amer. Math. Soc., Providence, R.I., 1973.

\bibitem[Gol99]{Goldman}
William~M. Goldman.
\newblock {\em Complex hyperbolic geometry}.
\newblock Oxford Mathematical Monographs. The Clarendon Press, Oxford
  University Press, New York, 1999.
\newblock Oxford Science Publications.

\bibitem[Gro82]{Gromov}
Michael Gromov.
\newblock Volume and bounded cohomology.
\newblock {\em Inst. Hautes \'Etudes Sci. Publ. Math.}, (56):5--99 (1983),
  1982.

\bibitem[Gro93]{MR1253544}
M.~Gromov.
\newblock Asymptotic invariants of infinite groups.
\newblock In {\em Geometric group theory, {V}ol.\ 2 ({S}ussex, 1991)}, volume
  182 of {\em London Math. Soc. Lecture Note Ser.}, pages 1--295. Cambridge
  Univ. Press, Cambridge, 1993.

\bibitem[Hel01]{MR1834454}
Sigurdur Helgason.
\newblock {\em Differential geometry, {L}ie groups, and symmetric spaces},
  volume~34 of {\em Graduate Studies in Mathematics}.
\newblock American Mathematical Society, Providence, RI, 2001.
\newblock Corrected reprint of the 1978 original.

\bibitem[HK77]{HK}
L.~A. Harris and W.~Kaup.
\newblock Linear algebraic groups in infinite dimensions.
\newblock {\em Illinois J. Math.}, 21(3):666--674, 1977.

\bibitem[HP14]{HamPoz}
Oskar Hamlet and Maria~Beatrice Pozzetti.
\newblock Classification of tight homomorphisms.
\newblock 12 2014.

\bibitem[Kec95]{MR1321597}
Alexander~S. Kechris.
\newblock {\em Classical descriptive set theory}, volume 156 of {\em Graduate
  Texts in Mathematics}.
\newblock Springer-Verlag, New York, 1995.

\bibitem[KM17]{KM}
Vincent Koziarz and Julien Maubon.
\newblock Maximal representations of uniform complex hyperbolic lattices.
\newblock {\em Ann. of Math. (2)}, 185(2):493--540, 2017.

\bibitem[KU77]{MR0492414}
Wilhelm Kaup and Harald Upmeier.
\newblock Jordan algebras and symmetric {S}iegel domains in {B}anach spaces.
\newblock {\em Math. Z.}, 157(2):179--200, 1977.

\bibitem[Lan99]{MR1666820}
Serge Lang.
\newblock {\em Fundamentals of differential geometry}, volume 191 of {\em
  Graduate Texts in Mathematics}.
\newblock Springer-Verlag, New York, 1999.

\bibitem[LS97]{MR1456512}
Urs Lang and Viktor Schroeder.
\newblock Jung's theorem for {A}lexandrov spaces of curvature bounded above.
\newblock {\em Ann. Global Anal. Geom.}, 15(3):263--275, 1997.

\bibitem[Mar91]{MR1090825}
G.~A. Margulis.
\newblock {\em Discrete subgroups of semisimple {L}ie groups}, volume~17 of
  {\em Ergebnisse der Mathematik und ihrer Grenzgebiete (3) [Results in
  Mathematics and Related Areas (3)]}.
\newblock Springer-Verlag, Berlin, 1991.

\bibitem[Mon01]{MR1840942}
Nicolas Monod.
\newblock {\em Continuous bounded cohomology of locally compact groups}, volume
  1758 of {\em Lecture Notes in Mathematics}.
\newblock Springer-Verlag, Berlin, 2001.

\bibitem[Mon18]{Monod18}
Nicolas Monod.
\newblock Notes on functions of hyperbolic type.
\newblock https://arxiv.org/abs/1807.04157, July 2018.

\bibitem[Mos73]{MR0385004}
G.~D. Mostow.
\newblock {\em Strong rigidity of locally symmetric spaces}.
\newblock Princeton University Press, Princeton, N.J.; University of Tokyo
  Press, Tokyo, 1973.
\newblock Annals of Mathematics Studies, No. 78.

\bibitem[MP14]{MR3263898}
Nicolas Monod and Pierre Py.
\newblock An exotic deformation of the hyperbolic space.
\newblock {\em Amer. J. Math.}, 136(5):1249--1299, 2014.

\bibitem[MP18]{MP18}
Nicolas Monod and Pierre Py.
\newblock Self-representations of the moebius group.
\newblock https://arxiv.org/abs/1805.12479, May 2018.

\bibitem[Pet06]{MR2243772}
Peter Petersen.
\newblock {\em Riemannian geometry}, volume 171 of {\em Graduate Texts in
  Mathematics}.
\newblock Springer, New York, second edition, 2006.

\bibitem[Poz15]{Poz}
Maria~Beatrice Pozzetti.
\newblock Maximal representations of complex hyperbolic lattices into {${\rm
  SU}(M,N)$}.
\newblock {\em Geom. Funct. Anal.}, 25(4):1290--1332, 2015.

\bibitem[Tol89]{Tol}
Domingo Toledo.
\newblock Representations of surface groups in complex hyperbolic space.
\newblock {\em J. Differential Geom.}, 29(1):125--133, 1989.

\bibitem[Tyc35]{MR1513031}
A.~Tychonoff.
\newblock Ein {F}ixpunktsatz.
\newblock {\em Math. Ann.}, 111(1):767--776, 1935.

\bibitem[Zim84]{ZimmerBook}
R.J. Zimmer.
\newblock {\em Ergodic {T}heory and {S}emisimple {G}roups}, volume~81 of {\em
  Monographs in Mathematics}.
\newblock Birkh{\"a}user, Basel; Boston; Stuttgart, 1984.

\end{thebibliography}
\end{document}